\documentclass[11pt]{amsart}

\usepackage[applemac]{inputenc}
\usepackage{times, ulem, amsfonts}
\usepackage{mathpazo, amsmath, amssymb, amsthm,color}
\usepackage[colorlinks=true, pdfstartview=FitV, linkcolor=blue, citecolor=blue, urlcolor=blue, dvips, breaklinks]{hyperref}

\textheight 20cm
\textwidth 14cm
\newcommand{\BMO}{{{\rm BMO}}}

\newcommand{\osc}{{\rm Osc}}

\definecolor{sah}{rgb}{0.66,0.33, 0.04}
\definecolor{adel4}{cmyk}{1,0,0,0}
\definecolor{adel3}{rgb}{0.66,0.33, 0.04}
\definecolor{adel1}{cmyk}{0,0.20,1,0}
\definecolor{adel2}{cmyk}{0,0.40,1,0.30}
\definecolor{adel0}{rgb}{0.99,0.60, 0.30}
\definecolor{trut}{rgb}{0.99,0.80, 0.00}
\definecolor{trus}{rgb}{0.00, 0.50, 0.00}
 \definecolor{trust}{rgb}{0.99, 0.99, 0.80}
\definecolor{MaCouleur}{rgb}{0,0.9,0.3}
\sloppy

\def\virgp{\raise 2pt\hbox{,}}
\def\({\left(}
\def\){\right)}

\def\<{\langle}
\def\>{\rangle}

\theoremstyle{plain}

\numberwithin{equation}{section}
\newtheorem{theorem}{Theorem}[section]
\newtheorem{proposition}[theorem]{Proposition}
\newtheorem{lemma}[theorem]{Lemma}
\newtheorem{corollary}[theorem]{Corollary}
\newtheorem{definition}[theorem]{Definition}

\newtheorem{Conj}[theorem]{Conjecture}
\theoremstyle{remark}
\newtheorem{remark}[theorem]{Remark}
\newcommand{\av}{{\rm Avg}}

\newcommand{\R}{{\mathbb R}}

  \newcommand{\Lip}{\textnormal{Lip}}

\newcommand{\lb}{{{\rm LBMO}}}

\usepackage[textmath,displaymath,floats,graphics,delayed]{preview}
\sloppy

\newcommand{\LMO}{{\it L}^{\alpha}{\it mo}}
 
 \newcommand{\LMOI}{ {\it Lmo}}

 \parindent 0pt
  \title[2D Inviscid Limit for Euler equations]{On the inviscid limit of the 2D Euler equations with vorticity along the $(\LMO)_\alpha$ scale}

\author[F. Bernicot]{Fr\'ed\'eric Bernicot}
\address{CNRS - Universit\'e de Nantes \\ Laboratoire de Math\'ematiques Jean Leray \\ 2, Rue de la Houssini\`ere 44322 Nantes Cedex 03, France}
 \email{frederic.bernicot@univ-nantes.fr}

\author[T. Elgindi]{Tarek Elgindi}
\address{Courant Institute of Mathematical Sciences \\ 251 Mercer Street \\ New York 10012-1185 NY \\ USA}
 \email{elgindi@cims.nyu.edu}

\author[S. Keraani]{Sahbi Keraani}
\address{UFR de math\'ematiques \\ Universit\'e de Lille 1\\
 59655 Villeneuve d'Ascq  Cedex\\   France}
\email{sahbi.keraani@univ-lille1.fr}

\thanks{This work was partially supported by the ANR under the project AFoMEN no. 2011-JS01-001-01.}

\date{\today}

\subjclass[2000]{76B03 ; 35Q35}
\keywords{ 2D incompressible Euler equations, Inviscid limit, Global well-posedness, {\rm BMO}-type space}

\begin{document}

\begin{abstract} In a recent paper  \cite{ BK2}, 
the global well-posedness of the two-dimensional Euler equation with vorticity in \mbox{$L^1\cap \lb$} was proved, where $ \lb$ is a  Banach space which is strictly imbricated between  \mbox{$L^\infty$}  and  $\BMO$. In
the present paper we  prove a global result of inviscid limit of the Navier-stokes system with data in this space and other spaces with the same BMO flavor. Some results of local uniform estimates on solutions of the Navier-Stokes equations, independent of the viscosity, are also obtained.
\end{abstract}

\maketitle

\begin{quote}
\footnotesize\tableofcontents
\end{quote}
\section{Introduction}
In this work, we consider the problem of the inviscid  limit of the 2D-Navier Stokes equations with rough initial data.
More precisely, we are interested in the situation where the vorticity lives in specific Morrey-Campanato spaces (in the same flavor as already studied in \cite{BK2,BH} and very recently in \cite{CMZ}).
Morrey-Campanato spaces are Banach spaces which extend the notion of $\BMO$ (the space of functions with bounded mean oscillation) describing situations where the oscillation of the function in a ball is controlled with respect to the radius of the ball. These spaces have attracted much attention in the last few decades due to some specific properties of them (John-Nirenberg inequalities, Duality with Hardy spaces, etc.). For example, the theory of Morrey-Campanato spaces may come in useful when the Sobolev embedding theorem is not available and have proven to be very useful in the study of elliptic PDEs.

We do not detail the literature about these spaces since it is huge. In this current work, we only focus on the $\LMO$ spaces (see precise definitions in Section \ref{sec:def}) where the oscillations of a function on a ball of radius $r\ll 1$ are bounded by $|\log(r)|^{-\alpha}$. What is interesting, is that the scale $(\LMO)_{0<\alpha<1}$ can be thought as an intermediate scale between $\BMO$ (for $\alpha \to 0$) and $L^\infty$ (for $\alpha \to 1$).

The Navier-Stokes system is the basic mathematical model for viscous incompressible flows and reads as follows:
 \begin{equation}
 \label{NS}
   (NS_\epsilon) \qquad \left\{ 
 \begin{array}{ll} 
 \partial_t u^\varepsilon +  u^\varepsilon \cdot\nabla u^\varepsilon-\varepsilon\Delta u^\varepsilon+\nabla P^\varepsilon =0, \\
 \nabla.u^\varepsilon=0,  \\
u^\varepsilon_{\mid t=0}= u_0.
 \end{array} \right.    
      \end{equation}
Associated to the viscosity parameter $\epsilon$, the vector field $u^\varepsilon$ stands for the velocity of the fluid, the quantity $P^\varepsilon$ denotes the scalar pressure, and $\nabla.u^\varepsilon=0$ means that the fluid is incompressible. We also detail the fractional Navier-Stokes equation, of order $\alpha\in(0,1)$:
 \begin{equation}
 \label{fracNS}
    \left\{ 
 \begin{array}{ll} 
 \partial_t u^\varepsilon +  u^\varepsilon \cdot\nabla u^\varepsilon+\varepsilon(-\Delta)^{\frac{\alpha}{2}} u^\varepsilon+\nabla P^\varepsilon =0, \\
 \nabla.u^\varepsilon=0,  \\
u^\varepsilon_{\mid t=0}= u_0.
 \end{array} \right.    
      \end{equation}
where the diffusion term is given by the fractional power of the Laplacian operator. When we neglect the diffusion term, then we obtain the Euler equations,
\begin{equation}
 \label{E}
  (E) \qquad \left\{ 
 \begin{array}{ll} 
 \partial_t u +  u \cdot\nabla u+\nabla P =0, \\
 \nabla.u=0,\\
 u_{\mid t=0}= u_0.
 \end{array} \right.   
      \end{equation}
      The mathematical study of the Navier-Stokes system was initiated by
  Leray in his pioneering \mbox{work \cite{Leray}.} In fact, by using a compactness
 method, he  proved
that for any divergence-free initial \mbox{data $v^0$} in the energy space
 $L^2,$ there exits  a global solution to $(NS^{\varepsilon})$. In the  case of
 {\it two} dimensional space that weak solution was proven to be unique.
 However, for higher 
dimension ($d\geq 3$) the problem of uniqueness is still a widely open
 problem. In the 60's,  \mbox{Fujita-Kato \cite{fk}} exhibited for initial data
 lying in the critical  Sobolev space $\dot{H}^{\frac{d}{2}-1}$ a class of 
 unique local solutions called mild solutions. We emphasize that
  the same result holds true when the initial data belong to the inhomogeneous Sobolev space $H^s,$ with $s\geq \frac{d}{2}-1.$
The global existence of these solutions  is  an outstanding open problem. However a positive answer is given at least in both following cases: 
  either when the initial data is small in the critical space $\dot{H}^{\frac{d}{2}-1}$ which is invariant under the scaling of the the Navier-Stokes equations,
   or in the space dimension two (this is because in two dimensions the scale invariant space is energy space). 
   
In the two dimensional space and when the regularity is sufficient to give a sense to the Biot-Savart law, then one can consider an alternative weak formulation:  the vorticity-stream weak formulation. It  consists in resolving the weak form of \eqref{E} in terms of vorticity $\omega=\textrm{curl}(u)$:  
     \begin{equation}
 \label{tourbillon}
 \partial_t\omega+(u \cdot \nabla)\omega=0,
 \end{equation}
    supplemented with the Biot-Savart law:
\begin{equation*}
u=K\ast\omega,\quad \hbox{with}\quad K(x)=\frac{x^\perp}{2\pi|x|^2}.
\end{equation*}
   The questions of existence/uniqueness of weak solutions have been extensively studied (see \cite{Ch1, BM, lions1} for instance). We emphasize that, unlike the fixed-point argument, the compactness method does not guarantee the uniqueness of the  solutions and then  the two issues (existence/uniqueness) are usually  dealt with separately.  These questions have been originally addressed by Yudovich in \cite{Y1} in the context of the Euler equations where the existence and uniqueness of weak solution to 2D Euler systems (in a bounded domain) are proved under the assumptions:   \mbox{$u_0\in L^2$}  and  \mbox{$\omega_0\in L^\infty$}.  
   Many works have been dedicated to the extension of this result to more general spaces (see \cite{Ser,DM,GMO,Ch,tan,Vishik1,Y2,De,FLX,Ger} for instance).  To the best of our knowledge all these contributions lack the proof of  at least  one of the following three fundamental properties:  global existence, uniqueness and regularity persistence. In \cite{BK2} we have extended Yudovich's result to some class of initial vorticity in a Banach space which is strictly imbricated between  \mbox{$L^\infty$}  and  \mbox{${\rm BMO}$} for which one has the following three fundamental properties:  global existence, uniqueness and regularity persistence.

      The problem of the convergence of smooth viscous solutions of \eqref{NS}   to the
Eulerian one as $\varepsilon$ goes to zero is well understood (in the case of the whole space of the torus). Majda showed
that under the assumption $v^0 \in  H^s$	with $s > d + 2$, the solutions $(u^\varepsilon)_{\varepsilon>0} $
converge in $L^2$ norm when $\varepsilon$ goes to zero to the unique solution  of \eqref{E}. The convergence rate is of order $(\varepsilon t)^{\frac{1}{2}}$. 
This result has been improved by Masmoudi \cite{Mas}.
For Yudovich type solutions with only the assumption that the vorticity is bounded this question was resolved by Chemin \cite{Ch3}.

 The first result of this paper is the following (in Section \ref{sec:def} we recall for the definitions of the spaces).

 \begin{theorem}
\label{il1}Assume $p\in [1,2)$. Let  $u_0\in L^2(\mathbb R^2)$ a divergence free vector fields such that $\omega_0\in L^p \cap \lb$ and  $u_\varepsilon$ {\rm (} resp. $u$ {\rm )} the solution of $(NS_\epsilon)$ {\rm (resp. (E))}. Then, for every $T>0$ there exist $C=C(u_0)$ and $\varepsilon_0=\varepsilon_0(u_0,T)$ such that
$$ \|u^\varepsilon(t)-u(t)\|_{L^2(\mathbb R^2)}\leq (Ct\varepsilon)^{\frac{1}{2}\exp(1-e^{Ct})},\qquad \forall t\in [0,T] ,\quad\forall \varepsilon\leq \varepsilon_0.$$
\end{theorem}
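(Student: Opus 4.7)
\medskip
\textbf{Proof plan.} The scheme is the classical Yudovich--Chemin inviscid-limit argument, adapted to the $\lb$ scale. I first invoke the uniform-in-$\varepsilon$ a priori bounds (established in earlier sections of the paper) to obtain, for each $T>0$, a non-decreasing function $\Phi$ such that for all $\varepsilon\in(0,\varepsilon_{0}]$,
$$\|u^{\varepsilon}\|_{L^{\infty}_{T}L^{\infty}_{x}}+\|u\|_{L^{\infty}_{T}L^{\infty}_{x}}+\|\omega^{\varepsilon}\|_{L^{\infty}_{T}(L^{p}\cap\lb)}+\|\omega\|_{L^{\infty}_{T}(L^{p}\cap\lb)}\le\Phi(T),$$
so that $w:=u^{\varepsilon}-u$ is uniformly bounded in $L^{\infty}$ by some $M=M(T)$.

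Subtracting the two systems gives
$$\partial_{t}w+u^{\varepsilon}\cdot\nabla w+w\cdot\nabla u-\varepsilon\Delta w+\nabla(P^{\varepsilon}-P)=\varepsilon\Delta u,$$
and a standard $L^{2}$ energy estimate, using $\dv u^{\varepsilon}=\dv w=0$, yields
$$\frac{1}{2}\frac{d}{dt}\|w\|_{L^{2}}^{2}+\varepsilon\|\nabla w\|_{L^{2}}^{2}=-\int(w\cdot\nabla u)\cdot w\,dx+\varepsilon\int\Delta u\cdot w\,dx.$$
The viscous forcing is rewritten via $\Delta u=\nabla^{\perp}\omega$ and an integration by parts as $-\varepsilon\int\omega\,(\omega^{\varepsilon}-\omega)\,dx$, which, by duality against the $\lb$ norm and the uniform bound of Step~1 on the vorticities, contributes at most $C(T)\varepsilon$. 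For the nonlinear term, H\"older and Gagliardo--Nirenberg yield, for any $q\in[2,\infty)$,
$$\Bigl|\int(w\cdot\nabla u)\cdot w\,dx\Bigr|\le\|\nabla u\|_{L^{q}}\,\|w\|_{L^{2}}^{2-2/q}M^{2/q}.$$

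The decisive input is the $\lb$ analogue of the classical bound $\|\nabla u\|_{L^{q}}\lesssim q\|\omega\|_{L^{\infty}}$, namely
$$\|\nabla u\|_{L^{q}}\le C\,q\,\|\omega\|_{L^{p}\cap\lb},\qquad q\ge p',$$
with linear growth in $q$, reflecting that $\lb$ inherits the Calder\'on--Zygmund action of $\BMO$ with only a finite logarithmic correction. Writing $f(t):=\|w(t)\|_{L^{2}}^{2}$ and combining the estimates above produces
$$f'(t)\le C\Phi(t)\,q\,f(t)^{1-1/q}+C\Phi(t)\,\varepsilon,\qquad q\in[2,\infty).$$
Yudovich's standard optimization of $q$ as a function of $t$ then gives $f(t)\le(Ct\varepsilon)^{\exp(1-e^{Ct})}$, from which a square root yields the claim. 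The main obstacle lies in establishing the sharp linear-in-$q$ Calder\'on--Zygmund estimate from the $\lb$ norm---this relies on the John--Nirenberg type properties of $\lb$ developed in \cite{BK2}---and, in parallel, in verifying that $\Phi(t)$ grows at most exponentially in $t$, so that the Yudovich optimization yields precisely the stated double-exponential rate rather than a degraded one.
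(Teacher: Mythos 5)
Your scheme is essentially the paper's: an $L^2$ energy estimate for the difference $w=u^\varepsilon-u$, H\"older plus $L^2$--$L^\infty$ interpolation for the nonlinear term, the linear-in-$q$ bound $\|\nabla u\|_{L^q}\lesssim q\,\|\omega\|_{L^2\cap\BMO}$ (this is exactly Lemma \ref{interpo} combined with Riesz-transform boundedness on $L^2\cap\BMO$, applied to the \emph{Euler} solution, whose $L^2\cap\BMO$ norm grows like $C_0e^{C_0t}$ by Theorem 1.1 of \cite{BK2}), and the Yudovich/Osgood optimization $q\sim-\ln\|w(t)\|_{L^2}^2$, which indeed produces the exponent $\tfrac12\exp(1-e^{Ct})$. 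Your treatment of the viscous forcing (keeping $\varepsilon\Delta u$ on the right and rewriting it as $-\varepsilon\int\omega\,(\omega^\varepsilon-\omega)\,dx$) differs harmlessly from the paper, which keeps $\varepsilon\Delta u^\varepsilon$ and bounds $\varepsilon\|\nabla u^\varepsilon\|_{L^2}\|\nabla U^\varepsilon\|_{L^2}$ by Cauchy--Schwarz; either way only uniform $L^2$ bounds on the two vorticities are needed, not any duality with $\lb$.

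The one genuine problem is your Step 1: you invoke a uniform-in-$\varepsilon$ bound $\|\omega^\varepsilon\|_{L^\infty_T(L^p\cap\lb)}\le\Phi(T)$ as ``established in earlier sections of the paper''. No such bound is available: the paper states explicitly that uniform control of the Navier--Stokes vorticity in these BMO-type spaces is essentially open, and its partial substitute (Theorem \ref{uniform}) is only local in time, concerns $\LMO$ with $\alpha>1$ rather than $\lb$, and is proved later by a discretization argument that plays no role in Theorem \ref{il1}. If your argument truly needed that bound it would collapse; fortunately it does not. On the viscous side you only need $\|\omega^\varepsilon(t)\|_{L^p\cap L^r}\le\|\omega_0\|_{L^p\cap L^r}$ for finite $r$ (maximum principle for the advection--diffusion vorticity equation), which already gives the uniform $L^\infty$ bound on $u^\varepsilon$ via Biot--Savart and the $O(\varepsilon)$ bound on the forcing; the BMO-type information is required only for the Euler vorticity, where \cite{BK2} supplies the exponential-in-time bound you flag at the end and which feeds the double exponential in the rate. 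With Step 1 restated in this weaker (and correct) form, and with the continuation argument making precise that $\|w(t)\|_{L^2}^2$ remains below $e^{-2}$ up to time $T$ for $\varepsilon\le\varepsilon_0$, your plan coincides with the paper's proof; the ``main obstacle'' you identify is precisely Lemma \ref{interpo}, which the paper proves by a Whitney decomposition of the maximal function level sets together with John--Nirenberg.
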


\begin{remark} In \cite{BK2} the global existence and uniqueness for 2D Euler with  initial vorticity $\omega_0\in L^p \cap \lb$ has been proved. The additional assumption $u_0\in L^2(\mathbb R^2)$ is easily propagated and we get $u\in  L^\infty  L^2$. 
\end{remark}

The second result is the counterpart version for more regular initial data, with an improved rate of convergence:

\begin{theorem}
\label{il1-bis} Assume $p\in [1,2)$. Let  $u_0\in L^2(\mathbb R^2)$ a divergence free vector field such that $\omega_0=\textrm{curl}(u_0) \in L^p \cap \LMOI$. 
Then there exists a unique solution of the 2D incompressible Euler equations (\ref{E}) such that for every $\delta>0$, $\omega \in L^\infty _{\text{loc}}( [0,\infty);L^{\alpha}mo \cap L^p),$ where $$\alpha(t)=1-\sqrt{t} , \, \, 0\leq t \leq \delta^2,$$
$$=1-\delta, \, \, t>\delta^2.$$ 
Moreover for every $\delta\in(0,1)$ there exist $C=C(u_0,\delta)$ and $\varepsilon_0=\varepsilon_0(u_0,T,\delta)$ such that
$$
\|u^\varepsilon(t)-u(t)\|_{L^2(\mathbb R^2)}\leq (CT\varepsilon)^{\frac{1}{2}e^{\beta(t)}},\qquad \forall t\in [0,T] ,\quad\forall \varepsilon\leq \varepsilon_0,
$$
with
$$ \beta(t) = \max(1-\delta,(1-\frac{e^{C_0t}-1}{2})^{1\over \delta}).$$
\end{theorem}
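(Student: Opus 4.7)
The argument splits into two parts: (i) construction of an Euler solution with vorticity in the time-dependent class $L^{\alpha(t)}mo$; (ii) an $L^2$ energy estimate for $w^\varepsilon := u^\varepsilon - u$, coupled with an Osgood-type integration whose logarithmic exponent is now time-dependent.

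For (i), I would build the Euler solution by mollification ($\omega_0^n\to\omega_0$ in $L^p\cap \LMOI$) and smooth 2D Euler solutions $\omega^n$, then pass to the limit as in \cite{BK2}. The key new ingredient is an a priori bound $\omega^n(t)\in L^{\alpha(t)}mo$ uniform in $n$, obtained from the Lagrangian representation $\omega^n(t,\cdot)=\omega_0^n\circ(\psi_t^n)^{-1}$. Because a vorticity in $L^p\cap L^\beta mo$ with $\beta$ close to $1$ produces an almost-Lipschitz velocity, the flow $\psi_t^n$ is H\"older continuous with a modulus $\theta(t)$ that degrades in time; the pulled-back oscillation $\osc(\omega^n(t),B_r)\leq \osc(\omega_0^n,B_{r^{\theta(t)}})\lesssim \theta(t)^{-1}|\log r|^{-1}$ matched against the $L^{\alpha(t)}mo$ norm yields a differential inequality on $\alpha$ that integrates to $\alpha(t)=1-\sqrt{t}$ on $[0,\delta^2]$. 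For $t>\delta^2$ one freezes $\alpha=1-\delta$ and invokes the constant-$\alpha$ theory of \cite{BK2}; uniqueness follows from step (ii).

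For (ii), subtract \eqref{NS} from \eqref{E} and test against $w^\varepsilon$; using $\dv u^\varepsilon=0$ one gets
\begin{equation*}
\tfrac12\tfrac{d}{dt}\|w^\varepsilon\|_{L^2}^2 + \varepsilon\|\nabla w^\varepsilon\|_{L^2}^2 \leq \Bigl|\int w^\varepsilon\otimes w^\varepsilon:\nabla u\,dx\Bigr| + C\varepsilon\|\nabla u\|_{L^2}\|\nabla w^\varepsilon\|_{L^2}.
\end{equation*}
The viscous remainder is absorbed by the dissipation. The nonlinear term is handled by the Morrey--Campanato commutator estimates of \cite{BK2}, which together with $\omega(t)\in L^p\cap L^{\alpha(t)}mo$ yield
\begin{equation*}
\Bigl|\int w^\varepsilon\otimes w^\varepsilon:\nabla u\,dx\Bigr| \leq C\|w^\varepsilon\|_{L^2}^2\bigl|\log\|w^\varepsilon\|_{L^2}\bigr|^{\alpha(t)}.
\end{equation*}
Setting $f(t):=\|w^\varepsilon(t)\|_{L^2}^2+C\varepsilon t$ produces the scalar inequality $f'(t)\leq Cf(t)|\log f(t)|^{\alpha(t)}$ with $f(0)\lesssim\varepsilon$. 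The substitution $g(t):=|\log f(t)|^{1-\alpha(t)}$ linearises this inequality; explicit quadrature on $[0,\delta^2]$ gives $\beta(t)=(1-(e^{C_0t}-1)/2)^{1/\delta}$, and on $[\delta^2,\infty)$ the constant-$\alpha$ regime furnishes $\beta(t)=1-\delta$, whose combination produces the announced rate.

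The main obstacle is step (i): quantifying the interplay between the loss of H\"older regularity of the flow and the propagation of the $L^{\alpha(t)}mo$ scale, then closing a self-consistent ODE for $\alpha$. Once this is in hand, the energy estimate and Osgood integration of step (ii) are technical but direct extensions of the machinery underlying Theorem \ref{il1}, the novelty being the time-dependence of the logarithmic exponent and the need to track it carefully through the quadrature producing $\beta(t)$.
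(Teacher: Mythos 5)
Your overall architecture (propagation of $L^{\alpha(t)}mo$ regularity with $\alpha(t)=1-\sqrt{t}$, then an energy--Osgood argument with a refined interpolation) is the same as the paper's, but two of your key mechanisms do not work as stated. In step (i), the flow generated by a velocity coming from an $\LMO\cap L^p$ vorticity with $\alpha<1$ is not merely H\"older: Proposition \ref{proposition:flow} gives the quasi-Lipschitz modulus $|\psi^{\pm1}(t,x)-\psi^{\pm1}(t,y)|\leq |x-y|\,e^{\eta V(t)|\ln|x-y||^{1-\alpha(t)}}$, and this sharper form is essential. More importantly, your pullback inequality $\osc(\omega(t),B_r)\leq \osc(\omega_0,B_{r^{\theta(t)}})$ is not correct as written: since $\psi$ only preserves Lebesgue measure, one has $\av_{B}|\omega_0\circ\psi^{-1}-c|\leq \frac{|\tilde{B}|}{|B|}\av_{\tilde{B}}|\omega_0-c|$, where $\tilde{B}$ is the enlarged ball containing $\psi^{-1}(B)$; for a H\"older modulus the ratio $(\tilde{r}/r)^2\sim r^{2(\theta-1)}$ blows up polynomially in $1/r$ and destroys the estimate, and even with the correct modulus the ratio is $e^{2V|\ln r|^{1-\alpha}}$, which cannot be absorbed if oscillations are measured in $L^1$ or $L^2$. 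The paper's Proposition \ref{prop:3.2-bis} is designed precisely for this: measure the oscillation in $L^q$, invoke John--Nirenberg in the form $\|\omega_0\|_{\LMO_q}\lesssim q\|\omega_0\|_{\LMOI}$, and optimize $q\simeq (1+V)|\ln r|^{1-\alpha}$; this converts the exponential volume factor into a factor $(1+V)$ and is what forces (and explains) the drop from $\LMOI$ to $\LMO$ with $\alpha<1$. Note also that no ``self-consistent ODE for $\alpha$'' is solved: $\alpha(\cdot)$ is chosen freely, the only constraint being that $1/(1-\alpha(s))$ --- which enters through the constant $\frac{\rho}{1-\alpha}$ in the velocity bound of Proposition \ref{prop:velocity} --- be integrable at $s=0$; $\alpha(t)=1-\sqrt{t}$ (or any $1-t^{\rho}$, $\rho<1$) is simply such a choice, after which a weighted Gronwall inequality closes the estimate.

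In step (ii) your logarithmic exponent is the wrong one, and this is not cosmetic. The generalized John--Nirenberg inequality (Proposition \ref{prrp}) yields $\|f\|_{L^q}\leq C q^{1-\alpha}\|f\|_{L^2\cap\LMO}$, so with $q=-\ln g^\varepsilon$, $g^\varepsilon=\|u^\varepsilon-u\|_{L^2}^2$, the nonlinear term is bounded by $C_0e^{C_0t}|\ln g^\varepsilon|^{1-\alpha}g^\varepsilon$, with the \emph{small} exponent $1-\alpha\leq\delta$; it is exactly this smallness that makes the Osgood quadrature yield an exponent $\beta(t)$ staying above $1-\delta$. With your exponent $\alpha(t)\approx 1$ the inequality is essentially $f'\lesssim f|\log f|$, i.e.\ the $\BMO$ case, and Osgood then returns the doubly exponentially degrading rate $\exp(1-e^{C_0t})$ of Theorem \ref{il1}, not the claimed $\beta(t)$; your own quadrature giving the exponent $1/\delta$ is only consistent if the logarithm carries the power $1-\alpha$. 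Finally, the paper runs this step with a \emph{fixed} $\alpha\in(0,1)$ chosen according to $\delta$ (using the a priori bound in $L^{1-\delta}mo$ furnished by step (i)); if you insist on a time-dependent exponent, your substitution $|\log f(t)|^{1-\alpha(t)}$ generates additional terms involving $\alpha'(t)$ (which behaves like $t^{-1/2}$ near $0$) that your computation does not track.
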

\begin{remark} The first part gives a global existence of solution for Euler equations, with a loss of regularity as small as we want (since $1-\delta \leq \alpha\leq 1$ and $\delta$ is arbitrary small). This improves some results of \cite{CMZ} in the particular situation of $\LMO$ with $\alpha=1$.
\end{remark}

\begin{remark} The order rates of convergence (of Theorems \ref{il1} and \ref{il1-bis}) are equal to $\frac{1}{2}$ at $t=0$ and then they are decreasing with the time. Moreover, the order of rate of convergence in Theorem 1.2 is bigger than $\frac{1-\delta}{2}$ (for $\delta$ as small as we want) which is just below the optimal rate in the case of $\frac{1}{2}$ strong solutions. This rate beats all of the previous rates of convergence for weak solutions: for example, the rate given in the case of weak solutions with bounded vorticity is exponentially decaying in time \cite{Ch3}. See also \cite{CozzK} and \cite{K}. 
\end{remark}

\begin{remark}  Since the $L^\infty$-norm of  $(u^\varepsilon)_{\varepsilon>0}$ is uniformly  bounded then, by interpolation, 
the convergence to the Eulerian solution $u$ holds in every $L^q$ with  $q\in [2,+\infty[$.
\end{remark}

 The uniform (with respect to the viscosity parameter $\epsilon$) bound of the  family of solutions to \eqref{NS} in the adequate space remains essentially open. The difficulty is due to the nature this norm which prevents us from dealing with a transport and advection at the same time.  To overcome this difficulty we use an idea which is based on Trotter's formula: we discretize the time and  alternate  the Euler and Heat equations in the small intervals and then let   length of the interval  goes to $0$.
The implementation of this algorithm is heavily related to the values of the universal constants appearing in the  logarithmic estimates. In the favorable case this give us a local uniform bound of solutions to \eqref{NS} in the adequate space. To explain this let us recall the first  logarithmic estimates.
 For \mbox{$\Phi$}  is defined on  \mbox{$]0,+\infty[\times]0,+\infty[$} one denotes
 $$    \|\psi\|_{K_\Phi}:=\sup_{x\neq y}\Phi\big(|\psi(x)-\psi(y)|, |x-y|\big),
 $$
for every $\psi$ an homeomorphism on $\mathbb R^d$.

 A logarithmic estimate in some  functional Banach space $\mathcal X$ is of the form
$$
\|f{\rm o}\psi\|_{\mathcal X}\leq \big[C_1+C_2\ln(\|\psi\|_{C(\psi)})\big]\|f\|_{\mathcal X},
$$
for any  Lebesgue  measure preserving  homeomorphism  \mbox{$\psi$}. The constants $C_1, C_2$ are of course universal and $C(\psi)$ a constant describing the required regularity of $\phi$.

These estimates arise naturally in the study of  transport PDEs, associated to a free-divergence vector field. Indeed, such a vector field gives rise to a bi-Lipschitz measure preserving flow, which plays a crucial role for solving the transport equation. In \cite{Vishik1} Vishik obtained a logarithmic growth  for the Besov space (\mbox{$\mathcal X=B^0_{\infty,1}$} and Lipshitzian flow) with applications to Euler equation. More recently, the authors have proved a similar for $\mathcal X=\BMO$ and Lipschitz flows \cite{BK} and $\mathcal X= L^p\cap\lb $ \cite{BK2}. In the last case, the flow is not Lipschitz and $\Phi$  is defined   by
    \begin{equation*}
 \Phi(r,s)=\left\{ 
 \begin{array}{ll} \max\big(\frac{1+|\ln(s)| }{ 1+|\ln r | },\frac{ 1+|\ln r | }{1+|\ln(s)| }\big),\quad {\rm if}\quad (1-s)(1-r)\geq 0, \\
 {(1+|\ln s|) }{ (1+|\ln r|) },\quad {\rm if}\quad  (1-s)(1-r)\leq 0.
 \end{array} \right.    
      \end{equation*}
In these result the sharp value of $C_1$ and $C_2$ are not important so no attempt to determine theses values were made. Our conjecture about this issue is:

\begin{Conj}
\label{Conj} In both cases considered in \cite{BK,BK2} the constant $C_1$ can be taken equal to $1$.
\end{Conj}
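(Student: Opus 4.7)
Since the statement is a conjecture (not a theorem with a fixed proof to anticipate), I will outline a strategy aimed at establishing $C_1=1$ in the two concrete cases of \cite{BK,BK2}, and pinpoint what stands in the way.

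The guiding principle is continuity at the identity: when $\psi=\text{id}$ the estimate is a trivial equality with $C_1=1$, so proving the conjecture amounts to showing that the ``extra constant'' in the estimate tends to $1$ as $\|\psi\|_{C(\psi)}\to 1$. I would first reduce to the regime $\|\psi\|_{C(\psi)}\le e$ (or some fixed small constant), since otherwise the log term is of order one and absorbs any additive constant up to a readjustment of $C_2$. So the whole issue is to produce a \emph{sharp constant} small-distortion estimate.

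For $\mathcal X=\BMO$ with Lipschitz $\psi$, the plan is to revisit the covering argument from \cite{BK}. Fix a ball $B=B(x,r)$ and set $c=f_{\psi^{-1}(B)}$. Measure preservation gives
\begin{equation*}
\frac{1}{|B|}\int_B |f\circ\psi-c|=\frac{1}{|\psi^{-1}(B)|}\int_{\psi^{-1}(B)}|f-c|.
\end{equation*}
Since $\psi^{-1}(B)$ lives in a ball of radius $r\|\nabla\psi^{-1}\|_\infty$, one can cover it by a dyadic tower of balls $B=B_0\subset B_1\subset\dots\subset B_N$ with $N\simeq\log\|\psi\|_{C(\psi)}$ and $|B_{k+1}|/|B_k|$ equal to a single fixed ratio $\lambda>1$. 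Telescoping $|f_{B_{k+1}}-f_{B_k}|\le \lambda \|f\|_{\BMO}$ produces the logarithmic factor, but the ``base'' contribution on $B_0$ comes out as exactly $\|f\|_{\BMO}$, so $C_1=1$. The delicate point is that the classical Whitney/Besicovitch coverings produce a multiplicative constant strictly $>1$ on the base ball; the natural fix is to use a \emph{one-parameter family of concentric enlargements} tailored to $\psi^{-1}(B)$, rather than a generic covering by disjoint balls. Proving that this can be done without spurious constants is, I believe, the main technical obstacle.

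For $\mathcal X=L^p\cap\lb$ with merely log-Lipschitz flow, the strategy is analogous but must use the two-regime function $\Phi$ recalled in the excerpt, which is precisely designed to absorb the log-distortion of the flow into the scale weight $(1+|\ln r|)^{-1}$. Here the sharp constant should emerge from matching, term by term, the logarithmic loss of the flow against the logarithmic gain in $\Phi$; the estimate is then dimensionally rigid enough that $C_1=1$ is natural, provided one performs the scale-by-scale decomposition of $\psi^{-1}(B)$ into annuli adapted to the modulus of continuity of $\psi$, rather than by balls. The hardest part is the crossover region $(1-s)(1-r)\le 0$ in the definition of $\Phi$, where oscillations at ``small'' and ``large'' scales interact multiplicatively; controlling this crossover with constant exactly $1$ would probably require an interpolation argument between two endpoint sharp inequalities (one purely at subunit scales, one purely at superunit scales), or an atomic/molecular decomposition in the spirit of the $\lb$--$H^1$ duality alluded to in the introduction. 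Without a duality statement of this strength available, the conjecture seems genuinely out of reach of the direct covering approach used up to now.
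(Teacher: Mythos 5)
Your attempt is a strategy sketch rather than a proof, and it stalls exactly at the point the paper actually resolves. The reduction you begin with is the same as the paper's: for $K_\phi\geq 2$ the earlier estimate of \cite{BK} already has the form $1+c'\log(K_\phi)$ after renaming the multiplicative constant, so everything hinges on the small-distortion regime $K_\phi\in[1,2]$. But there your plan --- take $c=\av_{\psi^{-1}(B)}f$ and telescope along a tower of concentric balls of fixed ratio --- founders on the base term you yourself flag: the mean oscillation over the non-ball set $\psi^{-1}(B)$ (or over the smallest ball containing it) is not controlled by $\|f\|_{\BMO}$ with constant exactly $1$, and no covering device is offered to fix this. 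The paper's proof of Theorem \ref{thm:compo-bis} needs no tower and no covering at all: since $\osc(f\circ\phi,B)=\inf_{C}\big(\av_B|f\circ\phi-C|^2\big)^{1/2}$, one simply tests with $C=\av_{K_\phi\tilde B}f$, where $\tilde B=B(\phi(x_0),r)$ has the \emph{same} radius as $B$; measure preservation together with $\phi(B)\subset K_\phi\tilde B$ gives $\osc(f\circ\phi,B)\leq K_\phi^{d/2}\|f\|_{\BMO}$, and for $K_\phi\in[1,2]$ one has $K_\phi^{d/2}\leq 1+c\log(K_\phi)$. This single dilated-ball comparison, exploiting that the oscillation is an infimum over constants, is the missing idea; with it the additive constant $1$ comes out for free (for the fixed $L^2$-oscillation norm --- note the paper stresses that the statement is not invariant under passing to an equivalent norm).

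Concerning scope: the paper does not prove the conjecture as stated either. It confirms it only for $\BMO$ and for $\LMO\cap L^p$ with $\alpha>1$, under the extra hypothesis that $\phi$ is bi-Lipschitz (Theorems \ref{thm:compo-bis} and \ref{thm:compo2}); in the $\LMO$ case an additional argument is needed for balls with $K_\phi r\geq\frac12$, where averages over $\phi(B)$ and $\tilde B$ are compared up to $(K_\phi-1)\|f\|_{\LMO}$ via the embedding $\LMO\hookrightarrow L^\infty$ of Lemma \ref{lemma:injection}. The genuinely non-Lipschitz case $L^p\cap\lb$ with the two-regime function $\Phi$, which occupies the second half of your proposal, is left open in the paper as well, so your assessment that it lies beyond the direct covering approach is consistent with the state of the art; but for the cases the paper does settle, your sketch would not close without the dilated-ball trick above.
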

 We are able to confirm this conjecture only in the $\BMO$-case and $\LMO$-case with a bi-Lipschitz flow $\phi$. More precisely we have the following improvement of a result in \cite{BK} for the composition in $\BMO$.

\begin{theorem} 
\label{thm:compo} 
In $\R^d$, there exists a constant $c:=c(d)$ such that for every function $f\in \BMO$ and every measure-preserving bi-Lipschitz homeomorphism $\phi$, we have
$$ \| f\circ \phi\|_{\BMO} \leq \|f\|_{\BMO} \left[1+c \log(K_\phi) \right],$$
where
$$ K(\phi)=K_\phi := \sup_{x\neq y} \max\left( \frac{|\phi(x)-\phi(y)|}{|x-y|},\frac{|x-y|}{|\phi(x)-\phi(y)|}\right).$$ 
\end{theorem}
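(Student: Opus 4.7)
First, I would fix a ball $B = B(x_0, r)$ and reduce the theorem to the oscillation bound
$$\inf_c \frac{1}{|B|}\int_B |f\circ\phi - c|\,dx \le \|f\|_{\BMO}\bigl(1 + c(d)\log K_\phi\bigr),$$
which equals the $\BMO$ semi-norm up to universal constants. Setting $y_0 := \phi(x_0)$, I would introduce the concentric balls $B_j := B(y_0, 2^j r)$ for $j = 0, 1, \dots, J$ with $J := \lceil \log_2 K_\phi \rceil$, so that $B_0$ has the same measure as $B$ and $\phi(B) \subset B_J$ by the Lipschitz bound on $\phi$; by measure-preservation, $|\phi(B)| = |B|$. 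For each $j$ I pick $c_j$ a near-minimizer of $t \mapsto \frac{1}{|B_j|}\int_{B_j}|f-t|\,dy$, so that $\frac{1}{|B_j|}\int_{B_j}|f - c_j|\,dy \le \|f\|_{\BMO}$. The measure-preserving change of variables then rewrites
$$\frac{1}{|B|}\int_B |f\circ\phi - c_0|\,dx = \frac{1}{|B|}\int_{\phi(B)} |f - c_0|\,dy,$$
which is the quantity to estimate.

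The key step is to decompose $\phi(B) = \bigsqcup_{j=0}^J E_j$ with $E_0 := \phi(B)\cap B_0$ and $E_j := \phi(B)\cap(B_j\setminus B_{j-1})$ for $j \ge 1$. The $j=0$ piece is controlled by
$$\frac{1}{|B|}\int_{E_0}|f - c_0|\,dy \le \frac{1}{|B_0|}\int_{B_0}|f - c_0|\,dy \le \|f\|_{\BMO},$$
using $|B_0|=|B|$; this \emph{produces the additive constant $1$ with no multiplicative loss}, which is precisely the observation responsible for the sharpness $C_1 = 1$. For $j \ge 1$, the triangle inequality $|f - c_0| \le |f - c_j| + |c_j - c_0|$, together with the standard chain estimate $|c_j - c_0| \le C\,j\,\|f\|_{\BMO}$ between concentric dyadic balls and the sharp John--Nirenberg tail bound
$$\int_{E_j}|f - c_j|\,dy \lesssim |E_j|\bigl(1 + \log(|B_j|/|E_j|)\bigr)\|f\|_{\BMO},$$
reduces matters to a summation over scales.

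Setting $\alpha_j := |E_j|/|B|$, measure-preservation gives $\sum_{j=0}^J \alpha_j = 1$, and the annular contributions are dominated by $\|f\|_{\BMO}\sum_{j=1}^J \alpha_j\bigl(C j + \log(2^{jd}/\alpha_j)\bigr)$. The two inequalities $\sum_{j} j\alpha_j \le J$ and the entropy bound $-\sum_{j} \alpha_j \log \alpha_j \le \log J$ then yield the desired $O(\log K_\phi)\|f\|_{\BMO}$ correction, completing the proof after taking the supremum over $B$. The main obstacle will be precisely this final summation: a crude bound $|E_j| \le |B|$ inside the John--Nirenberg logarithm produces a $(\log K_\phi)^2$ loss, so the sharp $\log K_\phi$ control hinges on playing the geometric growth $2^{jd}$ against the $L^1$-constraint $\sum_j \alpha_j = 1$ via the concavity of $t \mapsto -t\log t$.
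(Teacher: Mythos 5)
Your scheme (dyadic annuli around $\phi(x_0)$, John--Nirenberg tail bounds on each piece, entropy bookkeeping in the summation) is a genuinely different route from the paper's, and it does recover the regime $K_\phi\geq 2$ --- but that regime was already known from \cite{BK}, and the argument fails precisely where the theorem has new content, namely $K_\phi\to 1$. Take $K_\phi=1+\epsilon$ with $\epsilon$ small, so $J=\lceil \log_2 K_\phi\rceil=1$ and the only annular piece is $E_1=\phi(B)\setminus B_0\subset B(y_0,(1+\epsilon)r)\setminus B(y_0,r)$, of measure $\alpha_1|B|$ with $\alpha_1\leq (1+\epsilon)^d-1\simeq d\epsilon$. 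Your John--Nirenberg tail bound gives for this piece a contribution of order $\alpha_1\bigl(1+\log(1/\alpha_1)\bigr)\|f\|_{\BMO}$, which in the worst case $\alpha_1\simeq \epsilon$ is $\epsilon\log(1/\epsilon)\,\|f\|_{\BMO}$; the entropy inequality is of no help here, since $J=1$ and the single term $\alpha_1\log(1/\alpha_1)$ is itself the obstruction. As $\log K_\phi\simeq \epsilon$, you only obtain $\|f\circ\phi\|_{\BMO}\leq \|f\|_{\BMO}\bigl(1+C\epsilon\log(1/\epsilon)\bigr)$, which does not imply the claimed $1+c\log K_\phi$. Moreover the loss is intrinsic to any argument that isolates the thin annulus and controls its contribution only through its measure: John--Nirenberg is sharp, so a subset of $B_1$ of measure $\delta|B|$ can genuinely carry average oscillation of size $\log(1/\delta)$, and $\delta\log(1/\delta)$ is not $O(\delta)$.

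The paper avoids this by never splitting off the annulus. For $K_\phi\in[1,2]$ it compares $f\circ\phi$ on $B$ directly with the average of $f$ over the single enlarged ball $B(\phi(x_0),K_\phi r)\supset \phi(B)$; measure preservation plus this inclusion give $\osc(f\circ\phi,B)\leq K_\phi^{d/2}\,\|f\|_{\BMO}$ for the $L^2$-oscillation norm, and $K_\phi^{d/2}\leq 1+c\log K_\phi$ on $[1,2]$, while the case $K_\phi\geq 2$ is delegated to \cite{BK}, where the multiplicative constant is harmless. Your proof would be repaired by the same move --- take $c_0$ a (near-)minimizer over $B(y_0,K_\phi r)$ rather than over $B_0$ and bound the whole integral over $\phi(B)$ by the oscillation over that one ball, paying only the volume ratio $K_\phi^{d}=1+O(K_\phi-1)$ --- but that is exactly the paper's argument. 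One further caveat: the sharp additive constant $1$ is not stable under replacing the $\BMO$ norm by an equivalent one (the paper remarks on this and works with the $L^2$-oscillation norm, for which the mean is the exact minimizer), so your opening reduction ``up to universal constants'' must be replaced by fixing one norm, e.g. $\sup_B\inf_c \av_B|f-c|$, and using it consistently on both sides.
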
 

\begin{remark} In \cite{BK}, such result was already obtained with a control by  $c_1\left[1+c \log(K_\phi) \right]$ with an implicit constant $c_1$. The aim here is to improve by proving that $c_1$ may be chosen equal to $1$, which brings an important improvement for when the map $\phi$ converges to the identity or any isometry (which is equivalent to $K_\phi$ converges to $1$).
\end{remark}

As an application, our second result is then the following : 
  \begin{theorem} 
\label{uniform} Take $p\in [1,2)$ and $\alpha>1$ and set ${\mathcal B}_{p,\alpha}:=L^p \cap \LMO$. Then for every  $u_0\in L^2(\mathbb R^2)$ a divergence free vector field such that $\omega_0={\rm rot}u_0\in {\mathcal B}_{p,\alpha}$  there exists $T=T(\|\omega_0\|_{{\mathcal B}_{p,\alpha}})$ and $C_0=C_0(\|\omega_0\|_{{\mathcal B}_{p,\alpha}})$ such that the family $(u^\varepsilon)_{\varepsilon>0}$ of solutions to \eqref{NS} satisfies the following bounds uniformly with respect to $\varepsilon>0$:
$$  
\|u^\varepsilon\|_{L^\infty([0,T],L^2)}+ \|{\rm rot}(u^\varepsilon)\|_{L^\infty([0,T], \mathcal B_{p,\alpha})} \leq C_0.
$$
The same holds for the fractional Navier-Stokes equations \eqref{fracNS}.
\end{theorem}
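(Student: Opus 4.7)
My plan is to combine standard a priori bounds with a Trotter-type time-splitting scheme whose viability rests crucially on the \emph{sharp} composition estimate from Theorem \ref{thm:compo}.

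\textbf{Standard bounds.} The $L^2$ energy inequality yields $\|u^\varepsilon(t)\|_{L^2}\leq\|u_0\|_{L^2}$ uniformly in $\varepsilon$; applied to the vorticity equation $\partial_t\omega^\varepsilon+u^\varepsilon\cdot\nabla\omega^\varepsilon=\varepsilon\Delta\omega^\varepsilon$, the maximum principle gives $\|\omega^\varepsilon(t)\|_{L^p}\leq\|\omega_0\|_{L^p}$ uniformly as well. The crux is then the $\LMO$-bound.

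\textbf{Trotter splitting.} I would discretize $[0,T]$ with step $h$ and, on each $[nh,(n+1)h]$, define an approximation $\omega^{\varepsilon,h}$ by alternating pure transport by $u^{\varepsilon,h}$ (reconstructed via Biot-Savart) with the heat semigroup $e^{\varepsilon h\Delta}$, namely
$$\omega^{\varepsilon,h}((n+1)h)=e^{\varepsilon h\Delta}\bigl(\omega^{\varepsilon,h}(nh)\circ\phi_n^{-1}\bigr),$$
with $\phi_n$ the measure-preserving flow of $u^{\varepsilon,h}$ on the step. The heat semigroup being a contraction on $\LMO$ (convolution by a probability kernel), the diffusive half-step does not worsen the norm. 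For the transport half-step, the $\LMO$-analogue of Theorem \ref{thm:compo} (the sharp composition estimate, valid when $\phi_n$ is bi-Lipschitz, which is our situation since $\alpha>1$) gives
$$\|\omega^{\varepsilon,h}(nh)\circ\phi_n^{-1}\|_{\LMO}\leq \|\omega^{\varepsilon,h}(nh)\|_{\LMO}\,\bigl(1+c\log K(\phi_n)\bigr).$$

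\textbf{Closing the estimate.} For $\alpha>1$, the Biot-Savart reconstruction of $u^{\varepsilon,h}$ from $\omega^{\varepsilon,h}\in L^p\cap\LMO$ yields a bi-Lipschitz flow with $\log K(\phi_n)\leq Ch(\|\omega_0\|_{L^p}+M_n)$, writing $M_n:=\|\omega^{\varepsilon,h}(nh)\|_{\LMO}$. Iterating produces $M_{n+1}\leq M_n\bigl(1+cCh(\|\omega_0\|_{L^p}+M_n)\bigr)$; letting $h\to 0$ passes this to the Riccati-type inequality $\dot M\leq CM(\|\omega_0\|_{L^p}+M)$, which stays finite on a time interval $[0,T]$ depending only on $\|\omega_0\|_{\mathcal B_{p,\alpha}}$. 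A compactness argument then identifies the limit of $\omega^{\varepsilon,h}$ as $h\to 0$ with the genuine vorticity $\omega^\varepsilon$, transferring the bound uniformly in $\varepsilon$. The fractional case (\ref{fracNS}) is identical, since $e^{-\varepsilon t(-\Delta)^{\alpha/2}}$ shares the same contraction property on $\LMO$ and $L^p$.

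\textbf{Main obstacle.} The decisive point is the \emph{sharpness} $C_1=1$ in Theorem \ref{thm:compo}: the scheme iterates $N=T/h\to\infty$ compositions, so any overconstant $c_1>1$ would generate a destructive prefactor $c_1^N\to\infty$. This is precisely why Conjecture \ref{Conj}, confirmed in the $\BMO$ and bi-Lipschitz-$\LMO$ cases, is indispensable. Secondary difficulties are the rigorous justification of the Trotter convergence (control of the drift regularity on each step) and the clean passage from the discrete Riccati recursion to its continuous counterpart without losing uniformity in $\varepsilon$.
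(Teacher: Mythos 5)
Your proposal is correct and follows essentially the same route as the paper: a Trotter splitting alternating the heat semigroup (a contraction on $\LMO\cap L^p$) with Euler transport, the sharp composition estimate of Theorem \ref{thm:compo2} with constant $1+c\log K_\phi$ applied to the Lipschitz flow available since $\alpha>1$, an iteration giving a uniform bound on a time $T\approx\|\omega_0\|_{\mathcal B_{p,\alpha}}^{-1}$, and a compactness/uniqueness argument identifying the limit with $\omega^\varepsilon$ (with the same remark on the fractional heat kernel). You also correctly pinpoint why the sharp constant $C_1=1$ is indispensable, exactly as in the paper's discussion.
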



The remainder of this  paper is organized as follows. In the  next section  we describe some preliminaries about functional spaces and how they appear in the study of 2D Euler equation. Then Theorem \ref{il1} is proved in Section \ref{limit1}. Section \ref{limit2} is devoted to the study of Euler equations with an initial vorticity in $\LMOI$, Theorem \ref{il1-bis}. Then in Section \ref{sec:uniform}, we prove Theorem \ref{uniform} by a discretization scheme.

\section{Definitions and preliminaries on functional spaces} \label{sec:def}

This is a preparatory section in which we recall some definitions of useful functional spaces  and we give some results, we need later. 

\subsection{The scale of $\LMO$ spaces}
 
We first define the $\LMO$ spaces:

\begin{definition} Let $\alpha \in[0,\infty)$ and $f:\R^2\to \R$ be a locally integrable function. We say that $f$ belongs to $\LMO$ if 
$$
\|f\|_{\LMO}:=\sup_{0 <r\le \frac{1}{2}} |\ln {r}|^\alpha \left(\av_{B} \left|f-\av_{B}f \right|^2\right)^{\frac{1}{2}}+ \left(\sup_{|B|=1}\int_{B}|f(x)|^2dx \right)^ {\frac{1}{2}}<\infty,
$$
where  the first supremum is taken over all the balls $B$ of radius $r\leq \frac{1}{2}$. 
\noindent For convenience, for $\alpha=1$ then $\LMO$ is denoted $\LMOI$.
\end{definition}

\begin{remark} As dictated by a variant of John-Nirenberg inequalities (see \cite{FPW,BeMa}), if we replace the $L^2$-control of the oscillations by a $L^p$-control for some $p\in(1,2]$ then we obtain an equivalent norm.
\end{remark}

We also recall the functional space $\lb$, introduced in \cite{BK2}. 
\begin{definition} The $\lb$-norm is defined by 
$$
\|f\|_{\lb}:=\|f\|_{{\rm BMO}}+\sup_{B_1,B_2}\frac{|\av_{B_2}(f)-\av_{B_1}(f)|}{1+\ln\big(\frac{ 1-\ln r_{B_2}  }{1-\ln r_{B_1}  }\big)},
$$
where the supremum is taken aver all pairs of balls  \mbox{$B_1$}   and  \mbox{$B_2$}  in  \mbox{$\R^2$}   with  \mbox{$0<r_{B_1} \leq 1$}  and  \mbox{$2B_2\subset B_1$}.
\end{definition}

\begin{remark} \label{remark} We give here some easy remarks on these spaces:
\begin{itemize}
 \item[a)] These spaces $\LMO$ are Banach spaces;
 \item[b)] If $\alpha <\beta$ then ${\it L}^{\beta}{\it mo} \subset {\it L}^{\alpha}{\it mo}$;
 \item[c)] If $\alpha=0$ then $\LMO$ corresponds to the intersection between $bmo$ (the local $\BMO$ space) and $L^1_{uloc}$ (the space of uniformly locally integrable functions).
 \item[d)] For $\alpha\geq 0$ and $p\in(1,\infty)$, the space $\LMO \cap L^p$ is included in $\BMO\cap L^p$.
 \item[e)] The convolution operator, by a $L^1$-normalized function is a contraction on all these spaces.
\end{itemize}
\end{remark}

\begin{lemma} \label{lemma:injection} For $\alpha>1$, we have the continuous embedding $\LMO \hookrightarrow L^\infty$. The condition $\alpha>1$ is optimal, since there exist non-bounded functions belonging to $\LMOI$.
\end{lemma}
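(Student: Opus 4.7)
The plan is a standard Campanato-style telescoping argument, combined with an explicit example for the sharpness statement.

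For the embedding, fix a Lebesgue point $x\in\R^2$ and let $B_0$ be a ball of unit Lebesgue measure centered at $x$; denote its radius by $r_0$. Define the dyadic family $B_k:=B(x,2^{-k}r_0)$. For every $k\geq 1$ the radius $r_k=2^{-k}r_0$ is at most $1/2$, so the $\LMO$-oscillation control applies and gives $|{\ln r_k}|\geq k\ln 2$. Using the nesting $B_{k+1}\subset B_k$ with $|B_k|/|B_{k+1}|=4$ and Cauchy--Schwarz,
$$
|\av_{B_{k+1}}f-\av_{B_k}f| \leq 4\,\av_{B_k}|f-\av_{B_k}f| \leq 4\left(\av_{B_k}|f-\av_{B_k}f|^2\right)^{1/2} \leq \frac{C\,\|f\|_{\LMO}}{k^\alpha}.
$$
Since $\alpha>1$, the series $\sum_{k\geq 1}k^{-\alpha}$ converges, so $(\av_{B_k}f)_k$ is Cauchy; by the Lebesgue differentiation theorem its limit equals $f(x)$ at almost every $x$, and
$$
|f(x)-\av_{B_0}f| \leq \sum_{k\geq 0}|\av_{B_{k+1}}f-\av_{B_k}f| \leq C\,\|f\|_{\LMO}.
$$
The starting term is absorbed by the unit-measure $L^2$-piece built into the norm: Cauchy--Schwarz on $B_0$ yields $|\av_{B_0}f|\leq (\av_{B_0}|f|^2)^{1/2}\leq \|f\|_{\LMO}$. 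Combining the two estimates proves $\|f\|_{L^\infty}\leq C\|f\|_{\LMO}$.

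For the optimality claim I would exhibit the classical example $f(x):=\log\log(1/|x|)$ cut off smoothly for $|x|\geq 1/e$ and extended by zero. Clearly $f\notin L^\infty$. A direct computation gives $|\nabla f(x)|\leq C/(|x|\,\log(1/|x|))$ near the origin. For a ball $B=B(x_0,r)$ with $r\leq 1/2$: if $|x_0|\geq 2r$ the mean-value inequality bounds the oscillation by $r\sup_B|\nabla f|\leq Cr/(|x_0|\,|\log|x_0||)\leq C/|\log r|$; if $|x_0|<2r$ then $B$ is essentially centered at the origin and the self-similarity $\log\log(1/|\lambda x|)=\log\log(1/|x|)+\log(1+\log\lambda/\log(1/|x|))$ reduces the oscillation computation to a rescaling that produces the same $1/|\log r|$ bound. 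Hence $f\in\LMOI\setminus L^\infty$, showing the threshold $\alpha=1$ cannot be lowered.

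The only mild technical point is aligning the $L^2$-normalization in the definition of $\LMO$ with a pointwise $L^\infty$-bound, and this is handled cleanly by Cauchy--Schwarz at the base of the telescope; no other obstacles arise.
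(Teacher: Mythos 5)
Your proof is correct and follows essentially the same route as the paper: a telescoping estimate on dyadic averages of balls centered at a point, summed using the convergence of $\sum_k k^{-\alpha}$ for $\alpha>1$, then the Lebesgue differentiation theorem, with the unit-measure $L^2$ part of the norm absorbing the base average (only note that the $k=0$ step of your telescope involves the ball $B_0$ of radius $1/\sqrt{\pi}>1/2$, so that difference must be bounded crudely through the unit-ball $L^2$ piece rather than the oscillation part of the norm -- a harmless fix, and the paper's own chain has the same informality). For the sharpness, you verify directly the function $\log\log(1/|x|)$, which is essentially the same example $x\mapsto\log(1-\log|x|)\,\mathbf{1}_{|x|\le 1}$ that the paper imports from \cite[Proposition 2]{BH}, and your sketch (gradient bound plus mean value away from the origin, rescaling/choice of comparison constant for balls near the origin) is the standard computation.
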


\begin{proof} Let $x$ be a fixed point of $\R^2$ and consider $B(r)=B(x,r)$ the balls centered at $x$. Then for a function $f\in \LMO$, it is well-known that we have for $n\gg 1$
\begin{align*}
 \left|  \av_{B(2^{-n})}f - \av_{B(1)} f \right| & \leq \sum_{k=1}^{n} \left|  \av_{B(2^{-k})}f - \av_{B(2^{-k+1})} f \right| \\
 & \leq \|f\|_{\LMO} \sum_{k=1}^{n} (1+ k)^{-\alpha}.
\end{align*}
Since $\alpha>1$ then the sum is convergent and so we deduce that
$$ \limsup_{n\to \infty}  \left|  \av_{B(2^{-n})}f \right| \lesssim \|f\|_{\LMO}.$$
Since $f$ is locally integrable, the differentiation theorem allows us to conclude that $f\in L^\infty$ and
$$ \|f\|_{L^\infty} \lesssim \|f\|_{\LMO}.$$

For the sharpness of the result, we refer to \cite[Proposition 2]{BH} where the function $x\mapsto \log(1-\log(|x|) {\bf 1}_{|x|\leq 1}$ is shown to belong to $\LMOI$, in ${\mathbb R}^2$.
\end{proof}

We also recall a result, proved in \cite[Theorem 1.1]{Pfetre}:
\begin{proposition} \label{prop:riesz}
For every $\alpha\geq 0$ and $p\in(1,\infty)$, the space $\LMO\cap L^p$  is stable by the action of any Riesz transforms.
\end{proposition}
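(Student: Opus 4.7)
The plan is to combine the classical Calder\'on--Zygmund $L^p$-theory with a quantitative oscillation estimate that tracks the logarithmic modulus $|\ln r|^{-\alpha}$. Since any Riesz transform $R$ is an $L^p$-bounded singular integral with a smooth homogeneous kernel $K$, the $L^p$-stability is classical, so the whole effort goes into proving, for every ball $B=B(x_0,r)$ with $r\le 1/2$, the refined bound
\[
\left(\av_B |Rf-\av_B Rf|^2\right)^{1/2} \lesssim |\ln r|^{-\alpha}\,\|f\|_{\LMO\cap L^p},
\]
together with the uniform local $L^2$-control on unit balls that enters the second term of the $\LMO$-norm.

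First I would fix $B$, set $c:=\av_{2B} f$, and split $f=f_1+f_2$ with $f_1:=(f-c)\chi_{2B}$. For $Rf_1$ the $L^2$-boundedness of $R$ combined with the defining $\LMO$-estimate on $2B$ gives
\[
\|Rf_1\|_{L^2(B)} \lesssim \|f-c\|_{L^2(2B)} \lesssim |B|^{1/2}\,|\ln r|^{-\alpha}\,\|f\|_{\LMO},
\]
so $(\av_B|Rf_1|^2)^{1/2}\lesssim |\ln r|^{-\alpha}\|f\|_{\LMO}$, which is already the target bound. For the far piece $Rf_2$, using that $R$ annihilates constants I would rewrite $Rf_2=R\bigl((f-c)\chi_{(2B)^c}\bigr)$ up to an additive constant (which disappears after subtracting $\av_B Rf$), and then invoke the standard H\"ormander kernel estimate to obtain
\[
|Rf_2(x)-Rf_2(x_0)|\lesssim \sum_{k\ge 1} 2^{-k}\,\av_{2^{k+1}B}|f-c|,\qquad x\in B.
\]

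The hard part is showing that this dyadic sum is bounded by $|\ln r|^{-\alpha}\|f\|_{\LMO\cap L^p}$, and not merely by $\|f\|_{\BMO}$ as a naive BMO computation would give. My plan is to cut the sum at the critical scale $k^*\simeq\log_2(1/r)$. For $k>k^*$ (annuli of radius $>1$), H\"older in $L^p$ combined with the geometric factor $2^{-k}$ leaves a remainder $\lesssim r\|f\|_{L^p}$, which is negligible compared to $|\ln r|^{-\alpha}$. For $k\le k^*$, I would telescope $\av_{2^{k+1}B}f-c$ across the intermediate dyadic averages, using the $\LMO$ estimate $|\av_{2^{j+1}B}f-\av_{2^j B}f|\lesssim |\ln(2^j r)|^{-\alpha}\|f\|_{\LMO}$ at each scale, and then interchange the order of summation to reduce to an expression of the form $\sum_{j\ge 1}2^{-j}\bigl(|\ln r|-j\ln 2\bigr)^{-\alpha}$.

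This final sum is the delicate step. Splitting at $j\approx k^*/2$ settles it: for small $j$ the logarithmic factor is comparable to $|\ln r|^{-\alpha}$ and the geometric series $\sum 2^{-j}$ contributes only a bounded constant, while for large $j$ one has $2^{-j}\lesssim r^{1/2}$, which dominates the bounded logarithmic factor and yields a negligible $O(r^{1/2})$ remainder. Combined with the estimate for $Rf_1$ this closes the oscillation bound. The uniform local $L^2$-control on unit balls is obtained by an analogous near/far decomposition: the $L^2$-boundedness of $R$ handles the near part (whose norm is controlled by the $L^2$-on-unit-balls part of the $\LMO$-norm), and a pointwise estimate from the kernel decay combined with $f\in L^p$ handles the far part.
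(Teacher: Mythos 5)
Your strategy is sound and is, in substance, the classical Spanne--Peetre argument which the paper does not write out but delegates to \cite{Pfetre}, adding only the remark that balls of radius larger than $1$ are handled through the $L^p$ part of the norm; your near/far splitting around $f-c$, the H\"ormander kernel estimate, the telescoping of dyadic averages and the splitting of $\sum_j 2^{-j}\bigl(|\ln r|-j\ln 2\bigr)^{-\alpha}$ at $j\simeq \tfrac12\log_2(1/r)$ all go through (on the admissible range $2^j r\le \tfrac12$ the factor $\bigl(|\ln r|-j\ln 2\bigr)^{-\alpha}$ is indeed bounded by $(\ln 2)^{-\alpha}$, which is exactly what your ``bounded logarithmic factor'' step needs). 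One presentational caveat: since $f\in L^p$ while $f_2=(f-c)\chi_{(2B)^c}+c\notin L^p$, the phrase ``$R$ annihilates constants'' should be implemented via the cancellation (vanishing spherical means) of the Riesz kernel, i.e.\ by working with the absolutely convergent differences $\int_{(2B)^c}\bigl(K(x-y)-K(z-y)\bigr)(f(y)-c)\,dy$ and checking that the constant recombines with the near piece into a genuine constant on $B$; this is standard but not automatic.

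There is, however, one step that fails as written: the annuli of radius larger than $1$. The quantity to control there is $\sum_{k>k^*}2^{-k}\av_{2^{k+1}B}|f-c|$, and the constant $c=\av_{2B}f$ cannot be dropped in favour of H\"older on $f$ alone: in $\R^2$ one only has $|c|\lesssim r^{-1}\|f\|_{\LMO}$ (or $r^{-2/p}\|f\|_{L^p}$), so the crude product $r\,|c|$ is merely $O(1)$ --- and even divergent as $r\to0$ if one uses only the $L^p$ bound with $p<2$ --- which is not $O(|\ln r|^{-\alpha})$ and destroys the oscillation estimate for every $\alpha>0$. The repair stays entirely inside your own scheme: first telescope $c$ up to the dyadic ball of radius $\simeq\tfrac12$, at a cost $\lesssim |\ln r|^{1-\alpha}\|f\|_{\LMO}$ (bounded, or logarithmic, when $\alpha\ge1$), and only then invoke the unit-ball $L^2$ part of the $\LMO$-norm together with the $L^p$ norm for the remaining large annuli; after multiplication by the geometric factor $\sum_{k>k^*}2^{-k}\simeq r$ this contributes $O\bigl(r\,|\ln r|^{1-\alpha}\bigr)\|f\|_{\LMO\cap L^p}$, which is admissible. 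With this correction your proof is complete and matches the argument the paper invokes by citation.
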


We do not write the proof, it is essentially the same than the one of \cite{Pfetre} excepted that we work here with the local version of $\BMO$-type spaces. The big balls (ball of radius larger than $1$) can be easily studied using the $L^p$ norm.

\medskip

In the sequel we will use  the following  interpolation lemma\footnote{The main point in this lemma is the  linear dependence of the interpolation  constant. Actually, the interpolation itself is well known \cite{GR} but we haven't found  in the literature this type of constants. by sake of completeness we  give the proof.}.

\begin{lemma} 
\label{interpo} 
There exists $C=C(n)>0$ such that the following estimate holds for every \mbox{$r\in [2,+\infty)$} and  every 
smooth function $f$
$$
 \|f\|_{L^r}\leq C r  \|f\|_{L^2\cap BMO}.
 $$
\end{lemma}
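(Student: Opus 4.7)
The plan is to recover the linear dependence in $r$ from the factorial in John--Nirenberg's tail bound, rather than from any scaling or interpolation identity. Concretely, I will estimate the distribution function $\phi(\lambda):=|\{|f|>\lambda\}|$ in two regimes: by Chebyshev when $\lambda$ is of order $\|f\|_{L^2}$, and by combining Calder\'on--Zygmund decomposition with John--Nirenberg when $\lambda$ is large, and then integrate $\|f\|_{L^r}^r=r\int_0^\infty\lambda^{r-1}\phi(\lambda)\,d\lambda$.

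First I would fix a threshold $\lambda_0\simeq\|f\|_{L^2}$. For $\lambda\leq\lambda_0$, Chebyshev gives $\phi(\lambda)\leq\lambda^{-2}\|f\|_{L^2}^2$, so the contribution of $(0,\lambda_0]$ to $\|f\|_{L^r}^r$ is bounded by $C\,\lambda_0^{r-2}\|f\|_{L^2}^2\lesssim\|f\|_{L^2}^r$. For $\lambda>\lambda_0$, I perform a Calder\'on--Zygmund stopping-time decomposition of $|f|$ at height $\lambda$, producing disjoint dyadic cubes $\{Q_j\}$ covering $\{|f|>\lambda\}$ up to a null set, with $\lambda<|f|_{Q_j}\leq 2^n\lambda$, and $\sum_j|Q_j|\leq C\lambda^{-2}\|f\|_{L^2}^2$ by the weak-$L^2$ inequality for the maximal function. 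On each $Q_j$, John--Nirenberg yields $|\{x\in Q_j:|f-f_{Q_j}|>\mu\}|\leq A|Q_j|e^{-B\mu/\|f\|_{\BMO}}$; combining this with $|f_{Q_j}|\leq 2^n\lambda$ and summing gives, for $\lambda$ larger than a fixed multiple of $\lambda_0$,
$$\phi(\lambda)\leq CA\,\lambda^{-2}\|f\|_{L^2}^2\,e^{-c\lambda/\|f\|_{\BMO}}.$$
Plugging this bound into the distribution-function integral and using $\int_0^\infty\lambda^{r-3}e^{-c\lambda/\|f\|_{\BMO}}d\lambda=\Gamma(r-2)(\|f\|_{\BMO}/c)^{r-2}$, the large-$\lambda$ contribution is bounded by $Cr\,\Gamma(r-2)\,\|f\|_{L^2}^2\,\|f\|_{\BMO}^{r-2}/c^{r-2}$. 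Taking the $r$-th root and invoking Stirling, which gives $\Gamma(r-2)^{1/r}\lesssim r/e$, produces exactly the announced bound $\|f\|_{L^r}\leq Cr\|f\|_{L^2\cap\BMO}$.

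The main obstacle, as I see it, is Step 2--3: organizing the CZ cubes and John--Nirenberg carefully enough that the super-level set $\{|f|>\lambda\}$ acquires both the $\lambda^{-2}\|f\|_{L^2}^2$ prefactor (coming from summing $|Q_j|$) and the exponential decay $e^{-c\lambda/\|f\|_{\BMO}}$. Getting both factors simultaneously is what makes the final Gamma integral convergent with the precise $(r/e)^r$ growth; any loss in the exponential rate would still give $\|f\|_{L^r}\lesssim r\,\|f\|_{L^2\cap\BMO}$ but with a worse absolute constant. The rest (Chebyshev for low $\lambda$, Stirling at the end) is routine.
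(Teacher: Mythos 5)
Your strategy (distribution function, Calder\'on--Zygmund cubes, John--Nirenberg tail, Gamma-function integral plus Stirling) is a genuinely different route from the paper's, which instead uses a Whitney covering of $\{M f>\lambda\}$ at the single level $\lambda=\|f\|_{\BMO_r}$ and converts John--Nirenberg into the inequality $\|f\|_{\BMO_r}\lesssim r\|f\|_{\BMO}$ (oscillations measured in $L^r$); both proofs extract the linear factor $r$ from the same factorial in John--Nirenberg. However, the key step of your sketch, as written, fails. If you run the stopping time at height $\lambda$ itself, so that $\lambda<\av_{Q_j}|f|\le 2^n\lambda$, then a point $x\in Q_j$ with $|f(x)|>\lambda$ need not exhibit any oscillation: since $|\av_{Q_j}f|$ may be as large as $2^n\lambda>\lambda$, the inclusion you would need, $\{|f|>\lambda\}\cap Q_j\subset\{|f-\av_{Q_j}f|>\lambda-|\av_{Q_j}f|\}$, is vacuous (the threshold can be negative), and John--Nirenberg produces no exponential factor at all --- think of a cube on which $f$ is essentially the constant $2^n\lambda$. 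So ``combining with $|\av_{Q_j}f|\le 2^n\lambda$'' does not yield $\phi(\lambda)\lesssim\lambda^{-2}\|f\|_{L^2}^2\,e^{-c\lambda/\|f\|_{\BMO}}$; this is precisely the obstacle you flag but do not resolve.

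The repair is standard and short: perform the decomposition at the lower height $\gamma\lambda$ with $\gamma:=2^{-n-1}$. Then $|\av_{Q_j}f|\le 2^n\gamma\lambda\le\lambda/2$, one still has $|f|\le\gamma\lambda<\lambda$ a.e.\ off the cubes, and Cauchy--Schwarz on each selected cube ($\gamma\lambda<\av_{Q_j}|f|\le(\av_{Q_j}|f|^2)^{1/2}$) gives $\sum_j|Q_j|\le(\gamma\lambda)^{-2}\|f\|_{L^2}^2$. Now $\{|f|>\lambda\}\cap Q_j\subset\{x\in Q_j:|f(x)-\av_{Q_j}f|>\lambda/2\}$, and John--Nirenberg on each $Q_j$ delivers exactly the distributional bound you claimed, with both the $\lambda^{-2}\|f\|_{L^2}^2$ prefactor and the decay $e^{-c\lambda/\|f\|_{\BMO}}$ (an iterated good-$\lambda$ inequality would do as well). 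With this one-line correction, your Chebyshev estimate for $\lambda\lesssim\|f\|_{L^2}$ and the $\Gamma(r-2)^{1/r}\lesssim r$ computation go through and give $\|f\|_{L^r}\le Cr\|f\|_{L^2\cap\BMO}$, i.e.\ the lemma, by a route different from (but morally equivalent to) the paper's Whitney-covering argument.
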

\begin{proof} We consider the usual Hardy-Littlewood maximal operator:
$$
M(f)(x)=\sup_{B\ni x}\frac1{|B|}\int_B|f(x)|dx=\sup_{B\ni x} \av_B |f|.
$$
Let $\lambda>0$ and
$$
E_\lambda=\{x: M(f)(x)>\lambda\}.
$$
Let $(Q_i)_i$ be a Whitney covering of $E_\lambda$. We have in particular
$$
Q_i\subset E_\lambda \quad \textrm{and} \quad  4Q_i\cap E_\lambda^c\neq\emptyset,
$$
which implies
$$
m_{4Q_i}(f)\leq\lambda,\qquad \av_{Q_i}(f)\leq4\lambda.
$$
One has, for every $i\in \mathbb N$,
\begin{eqnarray*}
\big(\int_{Q_i}|f(x)|^rdx\big)^{1/r}&\leq&  \big(\int_{Q_i}|f(x)-\av_{Q_i}(f)|^rdx\big)^{1/r}+\lambda|Q_i|^{1/r}
\\
&\leq& |Q_i|^{1/r}\big( \|f\|_{\BMO_r}+\lambda\big)
\\
&\leq& 2|Q_i|^{1/r} \|f\|_{\BMO_r},
\end{eqnarray*}
where $\BMO_r$ is the $\BMO$-norm with oscillations controlled in $L^r$. Summing on $i\in \mathbb N$ one gets
\begin{eqnarray*}
\int_{E_\lambda}|f(x)|^rdx&\leq&  \sum_i\int_{Q_i}|f(x)|^rdx
\\
&\lesssim & C^r|E_\lambda| \|f\|_{\BMO_r}^r,
\end{eqnarray*}
where $C$ is a universal constant.
But, by maximal theorem (see \cite{GR} for instance),
$$
|E_\lambda|\lesssim{\lambda^{-2}}\|f\|_{L^2}^2.
$$
This gives,
$$
\int_{E_\lambda}|f(x)|^rdx\leq {C^r}{\lambda^{-2}}\|f\|_{L^2}\|f\|_{\BMO_r}^r . 
$$
Trivially on has
$$
|f(x)|\leq M(f)(x)\leq \lambda,\qquad \forall\, x\in  E_\lambda^c.
$$
This yields, via H\^older inequality,
\begin{eqnarray*}
\int_{E_\lambda^C}|f(x)|^rdx&\leq& (\int_{E_\lambda^C}|f(x)|^2dx)\|  f\|_{ L^\infty(E_\lambda^c)}^{r-2 }
\\
&\lesssim & \|f\|_{L^2}^2 \lambda^{r-2}.
\end{eqnarray*}
Finally,
\begin{eqnarray*}
\int|f(x)|^rdx\leq \|f\|_{L^2}^2\lambda^{-2}   [C\lambda^r+C^r\|f\|_{\BMO_r}^r],
\end{eqnarray*}
where $C$ is a universal constant.
Taking $\lambda= \|f\|_{\BMO_r}$ we infer
\begin{eqnarray*}
\int|f(x)|^rdx\lesssim  \|f\|_{L^2}^2\|f\|_{\BMO_r}^{r-2}C^r.
\end{eqnarray*}Thus,
$$
\|f\|_{L^r}\lesssim  C\|f\|_{L^2}^{\frac2r} \|f\|_{\BMO_r}^{1-\frac2r}.
$$
One of the direct consequence of   John-Nirenberg inequality is (with $Gamma$-funcion satisfying $\Gamma(r)\lesssim r^r$)
\begin{eqnarray*}
\|f\|_{\BMO_r(\mathbb R^n)}&\lesssim_n&(r\Gamma(r))^{\frac1r} \|f\|_{\BMO(\mathbb R^n)}
\\
&\lesssim_n&r\|f\|_{\BMO(\mathbb R^n)}.
\end{eqnarray*}
Thus , we obtain finally
\begin{eqnarray*}
\|f\|_{L^r}&\lesssim_n&  r^{1-\frac2r}\|f\|_{L^2}^{\frac2r} \|f\|_{\BMO}^{1-\frac
2r}
\\
&\leq &C r\|f\|_{L^2}^{\frac2r} \|f\|_{\BMO}^{1-\frac
2r},
\end{eqnarray*}
 as claimed.
\end{proof}

\subsection{Regularity estimates on the flow for $\LMO$ vorticity}

We first aim to obtain informations on the regularity of the velocity vector-field $u$, associated to a $\LMO$-vorticity $\omega$ via the Biot-Savart law:
\begin{equation}
u=K\ast\omega,\quad \hbox{with}\quad K(x)=\frac{x^\perp}{2\pi|x|^2}. \label{eq:biot-savart}
\end{equation}

We first give a refinement of \cite[Proposition 5]{BH}:

\begin{definition} We say that a function $f:\R^2 \to \R^2$ belongs to the class $L^\beta L$ for $\beta\in[0,1]$ if 
$$ \|f\|_{L^\beta L}:=\sup_{0<|x-y|<\frac12} \ \frac{|f(x)-f(y)|}{|x-y|\big|\ln|x-y|\big|^\beta}+\|f\|_{L^\infty}<\infty.  $$
Note also that the space $L^\beta L$ may also be equipped with the following equivalent norm:
 $$ \|f\|_{L^\beta L}\simeq \sup_{x\neq y} \ \frac{|f(x)-f(y)|}{|x-y|(1+\big|\ln|x-y|\big|^\beta)}+\|f\|_{L^\infty}.  $$
For $\beta=0$, this corresponds to bounded and Lipschitz functions and so $L^0L$ will be denoted by $\Lip$.
\end{definition}

\begin{proposition} \label{prop:velocity} For $p\in(1,2)$, there exists a constant $\rho$ such that for every $\alpha\geq 0$,  and every vorticity $\omega\in \LMO\cap L^p$ the corresponding velocity $u$ given by (\ref{eq:biot-savart}) satisfies the following:
\begin{itemize}
 \item If $\alpha\in[0,1)$ then $u\in L^{1-\alpha}L$ with
 $$ \|u\|_{L^{1-\alpha}L} \leq  \frac{\rho}{1-\alpha} \|\omega\|_{\LMO \cap L^p};$$
\item If $\alpha>1$ then $u\in \Lip$ with
$$ \|u\|_{\Lip} \leq  \frac{\rho \alpha }{\alpha-1} \|\omega\|_{\LMO\cap L^p}.$$
\end{itemize}
\end{proposition}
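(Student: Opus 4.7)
The plan is to split according to the value of $\alpha$ and to use essentially different tools in each regime: for $\alpha>1$ the argument reduces to Riesz-transform estimates combined with the embedding $\LMO\hookrightarrow L^\infty$; for $\alpha<1$ one must unpack the Biot-Savart formula and exploit a cancellation in the intermediate regime. In the first case, since $u=\nabla^\perp(-\Delta)^{-1}\omega$, the gradient $\nabla u$ is a matrix of Riesz transforms of $\omega$, so by Proposition~\ref{prop:riesz} one has $\|\nabla u\|_{\LMO\cap L^p}\lesssim \|\omega\|_{\LMO\cap L^p}$. Inspection of the proof of Lemma~\ref{lemma:injection} — where the series $\sum_k(1+k)^{-\alpha}\lesssim 1/(\alpha-1)$ — shows that the embedding constant in $\LMO\hookrightarrow L^\infty$ is $\lesssim \alpha/(\alpha-1)$, yielding $\|\nabla u\|_{L^\infty}\lesssim \tfrac{\alpha}{\alpha-1}\|\omega\|_{\LMO\cap L^p}$. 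A uniform bound on $\|u\|_{L^\infty}$ follows by noting that $\omega\in L^p\cap L^q$ for some $q>2$ (using Lemma~\ref{lemma:injection} for $\alpha>1$ or Lemma~\ref{interpo} otherwise) and splitting $K$ into its near part in $L^{q'}$ and far part in $L^{p'}$.

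For $\alpha\in[0,1)$, I would fix $x\ne y$, set $h=|x-y|\le 1/4$, and write
$$ u(x)-u(y)=\Bigl(\int_{|z-x|\le 2h}+\int_{2h<|z-x|\le 1/2}+\int_{|z-x|>1/2}\Bigr)[K(x-z)-K(y-z)]\omega(z)\,dz. $$
The outermost integral is handled by H\"older using $|K(x-z)-K(y-z)|\lesssim h/|z-x|^2$ together with $\omega\in L^p$. For the near part one uses $|K|\lesssim |\cdot|^{-1}$ and a dyadic decomposition of $B(x,2h)$ into shells $B_k=B(x,2^{-k+1}h)$, reducing matters to estimating $\sum_k 2^{-k}\av_{B_k}|\omega|$. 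Combining the $\LMO$ oscillation bound $(\av_{B_k}|\omega-\av_{B_k}\omega|^2)^{1/2}\lesssim \|\omega\|_{\LMO}/(k+|\log h|)^\alpha$ with the telescoping inequality $|\av_{B_k}\omega-\av_{B_{k-1}}\omega|\lesssim \|\omega\|_{\LMO}|\log r_{k-1}|^{-\alpha}$ descended from the unit ball down to $B_k$ gives
$$ \av_{B_k}|\omega|\lesssim \|\omega\|_{\LMO\cap L^p}\Bigl(1+\tfrac{(k+|\log h|)^{1-\alpha}}{1-\alpha}\Bigr),$$
and the geometric factor $2^{-k}$ then makes the sum converge to $\lesssim \tfrac{h|\log h|^{1-\alpha}}{1-\alpha}\|\omega\|_{\LMO\cap L^p}$.

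The intermediate region $2h<|z-x|\le 1/2$ is where the real difficulty lies, since the naive bound $|K(x-z)-K(y-z)|\lesssim h/|z-x|^2$ combined with the growth of $\av_{B_j}|\omega|$ loses an extra power of the logarithm. The remedy is to decompose over dyadic annuli $A_j=\{2^jh\le |z-x|<2^{j+1}h\}$ and to split each contribution as
$$ \int_{A_j}[K(x-z)-K(y-z)]\omega(z)\,dz=\int_{A_j}[K(x-z)-K(y-z)](\omega(z)-\omega_j)\,dz+\omega_j I_j, $$
with $\omega_j=\av_{A_j}\omega$ and $I_j=\int_{A_j}[K(x-z)-K(y-z)]\,dz$. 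The oscillation term is controlled by $h\|\omega\|_{\LMO}/(j+|\log h|)^\alpha$ via Cauchy-Schwarz on the ball enclosing $A_j$, summing to the desired bound. For $\omega_j I_j$, the key observation is that $A_j$ is symmetric around $x$: the oddness of $K$ gives $\int_{A_j}K(x-z)\,dz=0$, and the componentwise parity of $\nabla K$ gives $\int_{A_j}\nabla K(x-z)\,dz=0$, so a second-order Taylor expansion produces $|I_j|\lesssim h^2/(2^jh)$. The geometric decay $2^{-j}$ is exactly what is needed to compensate the telescoping growth of $|\omega_j|$, the dominant term occurring at $j\sim|\log h|$ and producing again the bound $\lesssim \tfrac{h|\log h|^{1-\alpha}}{1-\alpha}\|\omega\|_{\LMO\cap L^p}$. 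The main obstacle, then, is recognising and exploiting this double cancellation of the zeroth and first moments of $K$ on centered annuli; without it, the estimate loses a power of the logarithm.
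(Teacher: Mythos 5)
Your argument is correct in substance, but it follows a genuinely different route from the paper. For the modulus of continuity the paper does not unpack the Biot--Savart kernel at all: it imports the Littlewood--Paley estimate $\|\Delta_n\omega\|_{L^\infty}\lesssim (1+n)^{-\alpha}\|\omega\|_{\LMO}$ from \cite{BH}, uses Bernstein and $\|\nabla\Delta_n u\|_{L^\infty}\simeq\|\Delta_n\omega\|_{L^\infty}$, and simply tracks the $\alpha$-dependence through $\sum_{n\le N}(1+n)^{-\alpha}\lesssim \frac{1}{1-\alpha}N^{1-\alpha}$ (for $\alpha<1$) or $\sum_{n\ge 0}(1+n)^{-\alpha}\simeq\frac{\alpha}{\alpha-1}$ (for $\alpha>1$); only the $L^\infty$ bound for $u$ is done as you do, by splitting $K$ into near and far parts. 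Your real-variable proof for $\alpha\in[0,1)$ --- dyadic shells near the singularity, annuli in the intermediate range with subtraction of the annular mean, and the vanishing of $\int_{A_j}K(x-z)\,dz$ and of $\int_{A_j}\nabla K(x-z)\,dz$ (the spherical mean of $\nabla K$ is indeed zero) to get $|I_j|\lesssim h\,2^{-j}$ --- is a valid self-contained substitute for the dyadic estimates of \cite{BH}, and it exposes the cancellation mechanism that Littlewood--Paley theory hides; the paper's route is shorter given \cite{BH} and makes the $\alpha$-uniformity of the constants immediate. Your treatment of $\alpha>1$ via Proposition \ref{prop:riesz} plus Lemma \ref{lemma:injection} is exactly the alternative the paper records in Remark \ref{remark:ulip}, but note that Proposition \ref{prop:riesz} is stated without any control of its constant in $\alpha$, whereas the proposition asks for a single $\rho$ valid for all $\alpha>1$; the paper's summation of the dyadic pieces yields that uniformity directly, so if you keep your route you must check that the Riesz-transform bound on $\LMO\cap L^p$ is uniform (or at worst comparable to $\frac{\alpha}{\alpha-1}$) in $\alpha$. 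Two harmless slips to fix: in the intermediate annuli the relevant logarithm is $|\ln(2^jh)|\approx|\ln h|-j\ln 2$, not $j+|\ln h|$ (and, because of the factor $2^{-j}$, the sum is dominated by the small-$j$ terms rather than $j\sim|\ln h|$) --- the total is still $\lesssim\frac{h|\ln h|^{1-\alpha}}{1-\alpha}$; and the outermost annulus may have radius slightly above $\frac12$, so either stop the intermediate region at $|z-x|\le\frac14$ or use the unit-ball $L^2$ part of the $\LMO$ norm there.
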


\begin{proof} The $L^\infty$-norm of $u$ can be more easily bounded.
Actually, a direct consequence of the  Biot-Savart law is
\begin{eqnarray*}
\|u\|_{L^\infty}&\leq& \|K\mathbb 1_{|x|<1}\|_{ L^p}\|\omega(t)\|_{L^{p'}}+ \|K\mathbb 1_{|x|\geq 1}\|_{ L^{p'}} \|\omega(t)\|_{L^p}
\\
&\leq& C_p(\|\omega(t)\|_{\LMO}+\|\omega(t)\|_{L^p}),
\end{eqnarray*}
where $p'$ is the conjugate exponent of $p$ and where we used (since $1<p<2$) that $L^{p'} \subset L^p \cap\BMO \subset L^p \cap \LMO$.

For $\alpha\in[0,1)$, we follow the same proof as in \cite[Proposition 1\&5]{BH} with following the behavior on $\alpha$ (more precisely, we use that $\sum_{n=1}^{N} n^{-\alpha} \lesssim \frac{1}{1-\alpha} N^{1-\alpha}$).

For $\alpha>1$, consider $\omega \in L^\alpha mo$. Then denote by $S_{0}, (\Delta_{n})_{n\geq 0}$ the standard Littlewood-Paley projectors. The following inequality holds (see \cite{BH}):
$$ \|\Delta_{n} \omega\|_{L^\infty} \lesssim (1+n)^{-\alpha}\|\omega\|_{ \LMO}  .$$
Consequently, following \cite[Proposition 5]{BH}
$$ \left|u(x)-u(y)\right| \leq |x-y| \|\nabla S_{0}(u)\|_{L^\infty} + |x-y| \sum_{n\geq 0} (1+n)^{-\alpha}.$$
Invoking Bernstein inequality and the well-known \footnote{We recall that $\nabla u={\mathcal R}(\omega)$ for some Riesz transform ${\mathcal R}$.  }$ \|\nabla \Delta_n u\|_{L^\infty}\simeq \|\Delta_n \omega\|_{L^\infty}$ 
\begin{eqnarray*} \left|u(x)-u(y)\right| &\leq& |x-y| \|\nabla S_{0}(u)\|_{L^\infty} + |x-y| \sum_{n\geq 0} \|\nabla \Delta_n u\|_{L^\infty}
\\
&\leq &  |x-y| \|u\|_{L^\infty}+ |x-y| \sum_{n\geq 0} \|\Delta_n \omega\|_{L^\infty}
\\
&\leq & |x-y| \big(\|u\|_{L^\infty}+ \|\omega\|_{ \LMO }\sum_{n\geq 0} (1+n)^{-\alpha} \big).
\end{eqnarray*}
The first part of the proof and the easy fact
$
\sum_{n\geq 0} (1+n)^{-\alpha} \simeq \frac{ \alpha }{\alpha-1}$ conclude the proof.
\end{proof}

Then, associated to a time-dependent divergence-free vector-field $u:=\R^+ \times \R^2 \to \R^2$, we define the flow $\psi(t,\cdot)$ as the solution of the differential equation,
$$
\partial_t{\psi}(t,x)=u(t,\psi(t,x)),\qquad {\psi}(0,x)=x.
$$
We have the following regularity:

\begin{proposition}{\cite[Proposition 6]{BH}}  \label{proposition:flow} Let $u$ be a smooth divergence-free vector field and $\psi$  be its  flow (and $\psi^{-1}$ its inverse). Then there exists a constant $\eta$ (independent on $u$) such that for every non-increasing function $\alpha:\R^+ \to (0,1]$ and for every $t\geq 0$ we have
$$ |x-y|\neq 0 \Longrightarrow |\psi^{\pm1}(t,x)-\psi^{\pm1}(t,y)|\leq |x-y| e^{\eta V(t)|\ln|x-y||^{1-\alpha(t)}}.$$
where
$$ V(t):=\int_0^t \|u(\tau)\|_{L^{1-\alpha(\tau)} L}d\tau. $$
\end{proposition}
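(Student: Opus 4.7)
The plan is to do a Gr\"onwall--Osgood analysis of the scalar ODE for the separation $\delta(s):=|\psi(s,x)-\psi(s,y)|$, with the crucial new input being the use of the non-increasing character of $\alpha$ to reduce the time-varying H\"older-type exponent to a constant one. Differentiating along the flow and using the equivalent form of the $L^{1-\alpha(s)}L$ norm recalled in the definition,
$$\delta'(s)\leq |u(s,\psi(s,x))-u(s,\psi(s,y))|\leq \|u(s)\|_{L^{1-\alpha(s)}L}\,\delta(s)\bigl(1+|\ln\delta(s)|^{1-\alpha(s)}\bigr).$$
As long as $\delta(s)\leq e^{-1}$, I set $\nu(s):=-\ln\delta(s)\geq 1$, turning this into
$$-\nu'(s)\leq 2\,\|u(s)\|_{L^{1-\alpha(s)}L}\,\nu(s)^{1-\alpha(s)},$$
where the factor $2$ absorbs the leading $1$ in $1+\nu^{1-\alpha(s)}$ (harmless since $\nu\geq 1$).

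The central observation comes next: for a fixed $t$ and any $s\in[0,t]$, monotonicity gives $\alpha(s)\geq\alpha(t)=:a$, hence $\nu(s)^{1-\alpha(s)}\leq \nu(s)^{1-a}$ (again using $\nu\geq 1$). Multiplying through by $a\nu^{a-1}\geq 0$ yields
$$\bigl(\nu(s)^a\bigr)'\geq -2a\,\|u(s)\|_{L^{1-\alpha(s)}L},$$
which integrates at once to $\nu(t)^a\geq \nu(0)^a-2a V(t)$. Provided $|x-y|$ is small enough that $2aV(t)\leq \tfrac12\nu(0)^a$, Bernoulli's inequality $(1-x)^{1/a}\geq 1-x/a$ (valid for $x\in[0,\tfrac12]$ and $a\in(0,1]$, since $1/a\geq 1$) gives
$$\nu(t)\geq \nu(0)\Bigl(1-\tfrac{2aV(t)}{\nu(0)^a}\Bigr)^{1/a}\geq \nu(0)-2V(t)\,\nu(0)^{1-a}.$$
Exponentiating and recalling $a=\alpha(t)$ and $\nu(0)=|\ln|x-y||$ recovers the announced bound with $\eta=2$.

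It remains to remove the smallness assumption on $\delta$ and to cover the inverse flow. A standard continuity/bootstrap argument shows that if $|x-y|$ is small enough (depending only on $V(t)$ and $\alpha(t)$), then the inequality $\delta(s)\leq e^{-1}$ propagates over $[0,t]$, which is exactly the regime where the right-hand side of the target bound is already a meaningful improvement; when $|x-y|$ is comparable to $1$ or larger, the crude Lipschitz-type estimate $\delta(t)\leq |x-y|+2\int_0^t\|u(\tau)\|_{\infty}\,d\tau$ (with $\|u(\tau)\|_\infty$ controlled by $\|u(\tau)\|_{L^{1-\alpha(\tau)}L}$ via the norm definition) yields the bound after enlarging $\eta$. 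For $\psi^{-1}$, the fastest route is to observe that $\psi^{-1}(t,\cdot)$ is the flow of the time-reversed field $-u(t-\cdot,\cdot)$, whose $L^{1-\alpha(\cdot)}L$ norm is unchanged, so the identical analysis applies; alternatively, one inverts the forward bound by setting $a=\psi^{-1}(t,x)$, $b=\psi^{-1}(t,y)$ and solving for $|a-b|$.

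The main obstacle is exactly the time-dependence of $\alpha$: if $\alpha$ were constant, the differential inequality would be a classical Osgood ODE solved by separation of variables, but with $\alpha(s)$ varying there is no direct separation. The non-increasing hypothesis is what allows the replacement $\alpha(s)\rightsquigarrow\alpha(t)$ that unlocks the integration in the second paragraph, which is why that hypothesis is imposed in the statement.
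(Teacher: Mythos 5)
The paper itself gives no proof of this proposition --- it simply cites \cite[Proposition 6]{BH} and notes that the constants there are independent of $\alpha$ --- and your core argument is precisely the expected Osgood-type one: the differential inequality for $\delta(s)=|\psi(s,x)-\psi(s,y)|$, the reduction of the time-varying exponent to the constant $a=\alpha(t)$ via $\alpha(s)\ge\alpha(t)$ and $\nu=-\ln\delta\ge1$, and the integration of $(\nu^a)'\ge-2a\|u(s)\|_{L^{1-\alpha(s)}L}$ followed by Bernoulli. That part is correct, and it has the merit of exhibiting an $\eta$ independent of $\alpha$, which is exactly the point the paper stresses when quoting \cite{BH}. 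For $\psi^{-1}$, your time-reversal remark is acceptable but glib: the reversed exponent function $s\mapsto\alpha(t-s)$ is non-\emph{increasing} no longer, so you cannot apply the forward analysis as a black box; what saves the argument is that the only monotonicity actually used is $\alpha(\tau)\ge\alpha(t)$ for every $\tau\in[0,t]$, which survives the change $\tau\mapsto t-\tau$, and this should be said explicitly.

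The genuine gap is in your last paragraph, where the non-small separations are dispatched by assertion. For $|x-y|\le e^{-1}$ outside the bootstrap/Bernoulli regime, the additive bound $\delta(t)\le|x-y|+2V(t)$ can indeed close the argument with a universal $\eta$, but only after extracting from the failure of the smallness condition a bound of the type $|\ln|x-y||^{a}\lesssim 1+V(t)$ and doing a real case analysis; none of this is in your text, and without it the claim is not proved. More seriously, near $|x-y|=1$ your statement that the crude estimate ``yields the bound after enlarging $\eta$'' cannot be repaired: there the right-hand side of the asserted inequality degenerates to $|x-y|$ itself, since $|\ln|x-y||^{1-\alpha(t)}$ vanishes at $|x-y|=1$ when $\alpha(t)<1$, whereas a bounded divergence-free field such as $u=(\sin x_2,0)$ strictly increases the distance between $(0,0)$ and $(0,1)$. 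So the proposition as literally transcribed fails in that band; the estimate is only needed (and true) for small separations --- Proposition \ref{prop:3.2-bis} applies it to balls of radius $r\le\frac12$ --- or with $1+|\ln|x-y||$ in place of $|\ln|x-y||$. Your write-up should either impose such a restriction or prove the modified form, rather than claim the literal inequality for all $x\ne y$.
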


We do not repeat the proof, since it is exactly the same one as detailed for \cite[Proposition 6]{BH}, where the implicit constants are shown to be independent on $\alpha$.

Similarly we have the same for a Lipschitz-velocity, which is more well-known:

\begin{proposition}  \label{proposition:flowlip} Let $u$ be a smooth divergence-free vector field and $\psi$  be its  flow (and $\psi^{-1}$ its inverse). Then there exists a constant $\eta$ (independent on $u$) such that for every $t\geq 0$ we have
$$  \|\psi^{\pm1}(t,\cdot)\|_{Lip} \leq e^{V(t)},$$
where
$$
   V(t):=\int_0^t \|u(\tau)\|_{Lip}d\tau.
$$
\end{proposition}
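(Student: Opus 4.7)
The plan is to apply Gr\"onwall's inequality directly to the difference of two trajectories. Fix $x, y \in \mathbb R^2$ and set $\delta(t) := \psi(t,x) - \psi(t,y)$, which satisfies $\delta(0) = x - y$ and
$$
\partial_t \delta(t) = u(t,\psi(t,x)) - u(t,\psi(t,y)).
$$
Taking norms and using the Lipschitz bound on $u(t,\cdot)$, we get $|\partial_t \delta(t)| \leq \|u(t)\|_{Lip}\,|\delta(t)|$, so since $|\cdot|$ is $1$-Lipschitz we have (a.e. in $t$) $\frac{d}{dt}|\delta(t)| \leq \|u(t)\|_{Lip}|\delta(t)|$. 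Gr\"onwall's lemma immediately yields $|\psi(t,x)-\psi(t,y)| \leq |x-y|\,e^{V(t)}$, which is the desired upper bound on $\|\psi(t,\cdot)\|_{Lip}$.

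For the inverse flow $\psi^{-1}(t,\cdot)$, the cleanest route is to recognize it as the time-$t$ value of the flow of the time-reversed vector field. Precisely, on $[0,t]$ set $v(s,z) := -u(t-s,z)$ and let $\phi$ be the flow of $v$. A direct check (differentiate $s\mapsto \psi(t-s,\phi(s,x))$) shows $\phi(t,\cdot) = \psi^{-1}(t,\cdot)$. Since $\|v(s,\cdot)\|_{Lip} = \|u(t-s,\cdot)\|_{Lip}$, the change of variable $\tau = t-s$ gives
$$
\int_0^t \|v(s)\|_{Lip}\,ds = \int_0^t \|u(\tau)\|_{Lip}\,d\tau = V(t).
$$
Applying the first step to $\phi$ in place of $\psi$ produces $\|\psi^{-1}(t,\cdot)\|_{Lip} \leq e^{V(t)}$, completing the proof.

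The argument is completely standard; the only small subtlety is the almost-everywhere differentiation of $|\delta(t)|$, which can be handled either by noting that $t\mapsto |\delta(t)|$ is locally Lipschitz (hence differentiable a.e.) with $\frac{d}{dt}|\delta(t)| \leq |\partial_t \delta(t)|$, or by regularizing via $\sqrt{|\delta|^2+\varepsilon^2}$ and letting $\varepsilon \to 0$. No genuine obstacle arises, which is consistent with the statement being presented as a companion to the more delicate Proposition \ref{proposition:flow}.
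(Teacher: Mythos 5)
The paper gives no proof of this proposition (it is recorded as well-known), and your Gr\"onwall argument for the forward flow is exactly the standard argument it has in mind: the bound $|\psi(t,x)-\psi(t,y)|\le |x-y|\,e^{V(t)}$ is correct, including your handling of the a.e.\ differentiation of $|\delta(t)|$. (Recall that in this paper $\|u\|_{\Lip}$ also contains the $L^\infty$ part, which only strengthens the hypothesis you use; for the flow itself the conclusion is of course to be read as a bound on the Lipschitz seminorm, since $\psi$ is unbounded.)

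The one flaw is your verification of the time-reversal identity: the map $F(s)=\psi(t-s,\phi(s,x))$ is \emph{not} constant, so differentiating it does not show $\phi(t,\cdot)=\psi^{-1}(t,\cdot)$. Indeed $F'(s)=-u\bigl(t-s,F(s)\bigr)-D\psi\bigl(t-s,\phi(s,x)\bigr)\,u\bigl(t-s,\phi(s,x)\bigr)$, which has no reason to vanish; concretely, for the constant field $u\equiv e_1$ one has $\psi(\tau,y)=y+\tau e_1$ and $\phi(s,z)=z-se_1$, so $F(s)=x+(t-2s)e_1$. The correct check uses the other composition: for fixed $x$, the curves $s\mapsto \phi(s,\psi(t,x))$ and $s\mapsto\psi(t-s,x)$ both solve $\dot z(s)=-u(t-s,z(s))$ with the same value $\psi(t,x)$ at $s=0$, hence coincide by uniqueness; taking $s=t$ gives $\phi(t,\psi(t,x))=x$, i.e.\ $\phi(t,\cdot)=\psi(t,\cdot)^{-1}$. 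Alternatively you can bypass time reversal altogether: the same Gr\"onwall argument run downward gives $\frac{d}{dt}|\delta(t)|\ge -\|u(t)\|_{\Lip}\,|\delta(t)|$, hence $|\psi(t,x)-\psi(t,y)|\ge e^{-V(t)}|x-y|$, and substituting $x=\psi^{-1}(t,a)$, $y=\psi^{-1}(t,b)$ yields $|\psi^{-1}(t,a)-\psi^{-1}(t,b)|\le e^{V(t)}|a-b|$ directly. With either repair the proof is complete and matches the intended standard argument.
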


\section{Inviscid limit for an initial vorticity in $\lb$, Theorem \ref{il1} } \label{limit1}

\begin{proof}[Proof of Theorem \ref{il1}] This proof, which follows a rather classical scheme\footnote{ See  \cite{Ch3} for instance.}, is based on two main ingredients: the control of the BMO-norm of the solution of (E) (proved in \cite{BK2}) and the refined expression of the constant appearing in Lemma \ref{interpo}.

It is well-known  since \cite{Leray} that the bidimensional Navier-Stokes system (\ref{NS}) with initial velocity in $L^2$ has a unique solution $u^\epsilon$ satisfying:
$$
\|u^\varepsilon(t)\|_{L^2}^2+2\varepsilon\int_0^t\|\nabla u^\varepsilon(t')\|_{L^2}^2dt'=\|u_0\|_{L^2}^2,\qquad \forall \, t\geq 0.
$$
The vorticity  $\omega^{\varepsilon}:=\partial_{1}u^{\varepsilon}_2-\partial_{2}u^{\varepsilon}_1$ satisfies the following reaction-diffusion equation
   $$
  \partial_{t}\omega^{\varepsilon}+u^{\varepsilon}\cdot\nabla\omega^{\varepsilon}-\varepsilon\Delta\omega^{\varepsilon}=0,\qquad \omega^\varepsilon_{\mid t=0}= \omega_0.
     $$
The classical $L^p$ estimate the this equation yields 
$$
\|\omega^{\varepsilon}(t)\|_{L^p}\leq \|\omega_0\|_{L^p},\qquad \forall \, t\geq 0.
$$

Let $U^\varepsilon=u^\varepsilon-u$ and $\pi^\varepsilon=P^\varepsilon-P$. One denotes also
$ \Omega^\varepsilon=\omega^\varepsilon-\omega$,
where $\omega^\varepsilon$ is the vorticity of $u^\varepsilon$ and  $\omega$ is the vorticity of $u$.

The vector field $U^\varepsilon$ satisfies
\begin{equation*}
 \left\{ 
\begin{array}{ll} 
\partial_t U^\varepsilon+u^\varepsilon\cdot\nabla U^\varepsilon+\nabla \pi^\varepsilon=U^\varepsilon\cdot\nabla u+\varepsilon\Delta u^\varepsilon,\qquad x\in \mathbb R^2, t>0, \\
\nabla.U^\varepsilon=0,\\
U^\varepsilon_{\mid t=0}= 0.
\end{array} \right.  
 \end{equation*} 
The energy estimate gives
\begin{eqnarray*}
\frac{d}{dt}\| U^\varepsilon\|^2_{L^2}&\leq&
 |\langle U^\varepsilon\cdot\nabla u, U^\varepsilon\rangle|+\varepsilon \|\nabla u^\varepsilon\|_{L^2}\|\nabla U^\varepsilon\|_{L^2}
 \\
 &\leq& I+II.
\end{eqnarray*}
    By $L^2$-continuity of Riesz-operator one has, for every $t\in [0,T]$,
   \begin{eqnarray*}
II&\leq&\varepsilon \|\omega^\varepsilon\|_{L^2}(\|\omega^\varepsilon\|_{L^2}+\|\omega\|_{L^2})
\\
&\leq&\varepsilon C_0.
\end{eqnarray*}
The last estimate follows from the uniform bound of the $L^2$ norm of the vorticities\footnote{By interpolation between $L^p$ and $\BMO$ we know that 
$\omega_0\in L^r$ for every $r\in [p,+\infty[$.}.
 
On the other hand, by H\"older inequality and the continuity of the Riesz-operator one gets, for every $q\geq 2$,
 \begin{eqnarray*}
I&\leq& \int_{\mathbb R^2} |\nabla u(t,x)||U^\varepsilon(t,x)|^2dx
\\
&\leq& \|\nabla u\|_{L^q} \|U^\varepsilon\|_{L^{2q'}}^{2}.
\end{eqnarray*} 
Using Lemma \ref{interpo} we infer
 \begin{eqnarray*}
I\lesssim q \|\nabla u\|_{L^2\cap\BMO} \|U^\varepsilon\|_{L^{2q'}}^{2}.
\end{eqnarray*} 
The continuity of the Riez operator on $L^2\cap\BMO$ yields\footnote{The continuity of a Riesz operator on $\BMO$ was proved in \cite{Pfetre}, see also Proposition \ref{prop:riesz}.}
\begin{eqnarray*}
I&\lesssim&  q \|\omega(t)\|_{L^2\cap\BMO} \|U^\varepsilon(t)\|_{L^{2q'}}^{2}
\\
&\leq &qC_0e^{C_0t}\|U^\varepsilon(t)\|_{L^{2q'}}^{2},
\end{eqnarray*} 
where we have used Theorem 1.1 in \cite{BK2} ($C_0= C_0(\|\omega_0\|_{L^p\cap\lb}$).
Using H\"older inequality and Biot-Savart law one obtains 
\begin{eqnarray*}
\|U^\varepsilon\|_{L^{2q'}}^{2}&\lesssim& \|U^\varepsilon\|_{L^{\infty}}^{\frac2q} \|U^\varepsilon\|_{L^{2}}^{2-\frac2q }
\\
&\lesssim &   \|\Omega^\varepsilon\|_{L^p\cap L^{3}}^{\frac2q} \|U^\varepsilon\|_{L^{2}}^{2-\frac2q }
\\
&\lesssim &  ( \|\omega^\varepsilon\|_{L^p\cap L^{3}}+ \|\omega\|_{L^p\cap L^{3}})^{\frac2q} \|U^\varepsilon\|_{L^{2}}^{2-\frac2q }.
\end{eqnarray*} 
Since $\|\omega^\varepsilon(t)\|_{L^p\cap L^{3}}^{\frac2q}$ is uniformly bounded then the outcome is
\begin{eqnarray}
\frac{d}{dt}\| U^\varepsilon(t)\|^2_{L^2} &\leq& C_0\big(\varepsilon+qe^{C_0t} \|U^\varepsilon\|_{L^{2}}^{2-\frac2q }\big), 
\label{eq:dif}
\\
&\leq& C_0\big(\varepsilon+q e^{C_0t}\|U^\varepsilon\|_{L^{2}}^{2-\frac2q }\big), \nonumber
\end{eqnarray} 
for all $q\geq 2$ and some constant $C_0=C_0(\|\omega_0\|_{L^p\cap\lb})$.  

Take $g^\varepsilon(t):=\| U^\varepsilon(t)\|^2_{L^2}$ and define $T^\varepsilon<T$ the maximal time:
$$
T^\varepsilon:=\max\{ t\leq T:   \sup_{\tau\in [0,t]}g^\varepsilon(\tau)\leq \frac{1}{e^2}\}.
$$
For every $t\in (0,T^\varepsilon)$ one chooses $ q=-\ln( g^\varepsilon(t))$, in (\ref{eq:dif}) to get
\begin{eqnarray*}
\dot{g}^\varepsilon(t)&\leq& C_0\big(\varepsilon-e^{C_0t}\ln(g^\varepsilon(t))g^\varepsilon(t)\big).
\end{eqnarray*}
Integrating this differential inequality 
\begin{eqnarray*}
{g}^\varepsilon(t)&\leq& C_0\varepsilon t+\int_0^t-C_0e^{C_0t"}\ln(g^\varepsilon(t"))g^\varepsilon(t")dt"
\\
&\leq& C_0\varepsilon t'+\int_0^t-C_0e^{C_0t"}\ln(g^\varepsilon(t"))g^\varepsilon(t")dt".
\end{eqnarray*}
for every $t\leq t'<T^\varepsilon$.

Assuming $C_0T\varepsilon_0<1$ and applying  Lemma \ref{lemma:Osgood} below
\begin{eqnarray*}
-\ln(-\ln({g}^\varepsilon))  +\ln(-\ln(C_0 t' \varepsilon))  \leq (e^{C_0t}-1),\qquad \forall t\leq t'<T^\varepsilon.
\end{eqnarray*}
This yields, for all $t\leq t'<T^\varepsilon$
\begin{eqnarray*}
{g}^\varepsilon(t)\leq (C_0 t'\varepsilon)^{\beta(t)},
\end{eqnarray*}
with $\beta(t)= \exp(1-e^{C_0t}).$
In particular,
\begin{eqnarray*}
{g}^\varepsilon(t)\leq  (C_0 t\varepsilon)^{\beta(t)},\qquad \forall t\in [0,T^\varepsilon[.
\end{eqnarray*}
If we assume that $\varepsilon_0$ satisfies also 
$$
(C_0 T \varepsilon_0)^{\beta(T)}\leq \frac1{e^2}.
 $$
 we get 
 $
 T^\varepsilon=T,$  for all $\varepsilon\leq \varepsilon_0$.
 
 This gives finally   and so
  \begin{eqnarray*}
{g}^\varepsilon(t)\leq  (C_0 t\varepsilon)^{\beta(t)},\qquad \forall t\in [0,T], \varepsilon\leq \varepsilon_0,
\end{eqnarray*}
for some constant $C_0=C_0(\|\omega_0\|_{L^p\cap\lb})$  and $\varepsilon_0=\varepsilon_0(\|\omega_0\|_{L^p\cap\lb},T)$ as claimed.
\end{proof}

The following Osgood Lemma  is a slight generalization of \cite[Lemma 3.4]{bcd} for which the function $c$ is constant and its  proof is an easy application of  it.

 \begin{lemma}[Osgood lemma] \label{lemma:Osgood} Let $\rho$ be a measurable function from $[t_0, T ]$ to $[0, a]$, $\gamma$ a locally
integrable function from $[t_0, T ]$ to $\mathbb R^+$, and $\mu$ a continuous and nondecreasing function from $[0,a]$ to $\mathbb R^+$. Assume that, for some nonnegative nondecreasing continuous $c$, the function $\rho$ satisfies
$$
 \rho(t)\leq c(t)+\int^t_{t_0}\gamma (t')\mu(\rho(t'))dt'.
 $$
 Then 
 $$
 -\mathcal M(\rho(t))+\mathcal M(c(t))\leq \int^t_{t_0}\gamma (t')dt'.
 $$
 with
 $$
 \mathcal M(x)=\int^a_x\frac1{\mu(r)}dr.
 $$
\end{lemma}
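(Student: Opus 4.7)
The plan is to reduce the statement to the classical Osgood lemma of \cite{bcd}, in which the majorant $c$ is assumed to be a constant. The footnote in the paper already announces this strategy, and the key observation is simply that one can freeze the argument of $c$ at the right endpoint.

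More precisely, I would fix an arbitrary $t \in [t_0,T]$ and exploit the fact that $c$ is nondecreasing: for every $s \in [t_0,t]$ one has $c(s)\leq c(t)$, so the hypothesis
$$\rho(s) \leq c(s) + \int_{t_0}^{s}\gamma(t')\mu(\rho(t'))\,dt'$$
implies
$$\rho(s) \leq c(t) + \int_{t_0}^{s}\gamma(t')\mu(\rho(t'))\,dt', \qquad \forall s\in[t_0,t].$$
Now the majorant is a genuine constant (depending on the frozen parameter $t$ but not on the running variable $s$), so the classical Osgood lemma from \cite[Lemma 3.4]{bcd} applies on the interval $[t_0,t]$ and yields
$$-\mathcal{M}(\rho(t)) + \mathcal{M}(c(t)) \leq \int_{t_0}^{t}\gamma(t')\,dt'.$$
Since $t$ was arbitrary, this is exactly the claimed inequality.

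There is essentially no obstacle here beyond being careful about degenerate cases. The only issue worth checking is the behaviour at $c(t)=0$: because $\mu$ may vanish at the origin, $\mathcal{M}(0)$ can be $+\infty$, in which case the conclusion forces $\mathcal{M}(\rho(t)) = +\infty$ and hence $\rho(t)=0$, consistently with the classical uniqueness part of Osgood. In all concrete applications in the paper (e.g.\ with $c(t) = C_0 t'\varepsilon > 0$ in the proof of Theorem \ref{il1}) we have $c(t) > 0$ on the relevant interval, so no delicate limiting argument is needed. One could alternatively give a direct proof by introducing the auxiliary function $\tilde{\rho}(s) := \rho(s) - c(s) + c(t)$ and tracking an ODE comparison, but the reduction above is by far the shortest route.
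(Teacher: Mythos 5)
Your proof is correct and matches the paper's intent exactly: the paper gives no detailed argument but states that the lemma "is a slight generalization of \cite[Lemma 3.4]{bcd} for which the function $c$ is constant and its proof is an easy application of it," and your freezing of $c$ at the right endpoint $t$ (using that $c$ is nondecreasing) is precisely that easy application. Your remark on the degenerate case $c(t)=0$ is a sensible extra precaution but not needed for the paper's use of the lemma.
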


\section{Inviscid limit for an initial vorticity in $\LMOI$} \label{limit2}

\subsection{Existence and uniqueness of global solution for Euler equation with an initial vorticity in $\LMOI$}

In this section we will use Propositions \ref{prop:velocity} and \ref{proposition:flow} to prove that if we solve the 2D Euler equations with initial vorticity $\omega_{0}\in \LMOI=L^{1} mo$ then given $\delta>0,$ $\omega(t) \in L^{\alpha(t)}$ where $\alpha(t)$ is a continuous function with $\alpha(0)=1$ and $\alpha(t) \geq 1-\delta$ for all $0<t<\infty.$ Prior to stating the precise theorem, we will make a few comments on the previous results in this direction. 
It was proven in Vishik \cite{Vishik1} that if $\omega_{0}$ satisfies 
\begin{equation} \sum_{-1}^{n} \|\Delta_{j} \omega_{0} \|_{L^\infty} \lesssim \Pi(n), \label{3.1} \end{equation} 
where $\Pi$ is an increasing function with $\int_{2}^{\infty} \frac{dn}{n\Pi(n)}=\infty$ then we can solve the Euler equations with for every $t>0$ 
\begin{equation} \sum_{-1}^{n} \| \Delta_{j}\omega(t) \|_{L^\infty} \lesssim  n\Pi(n), \label{3.2} \end{equation} 
where the constant gets worse in time depending upon $\omega_{0}.$ 

In particular, this result in Besov spaces flavor proves some propagation of the initial regularity but with a loss.

Let us then consider the space $\LMOI$. It is easy to see that for $\omega_0\in \LMOI$ then (\ref{3.1}) is satisfied with $\Pi(n)=\log(n)$. Applying Vishik's result gives us a solution of Euler equations satisfying (\ref{3.2}). We claim that indeed the solution is better and satisfies for $t>0$ and any $\delta>0$
\begin{equation} \sum_{-1}^{n} \| \Delta_{j}\omega(t) \|_{L^\infty} \lesssim  n^{\delta} \label{3.2-bis} \end{equation} 
(still with implicit constants depending on time and on $\omega_0$). This will be a consequence of the following Theorem (since $\omega(t)\in L^{1-\delta} mo$ implies (\ref{3.2-bis})).
So by this way, $\LMOI$ appears as a subspace of vorticities satisfying (\ref{3.1}) with $\Pi=\log$ where we improve Vishik's result and the loss of regularity is as small as we want (in terms of exponent of $n$ in (\ref{3.2-bis}), improving (\ref{3.2})).

We now state the main theorem of this section. 

\begin{theorem} \label{thm:3.1}
Let $0<\delta<1$ and $p\in[1,2)$ be given. Suppose that $\omega_{0} \in \LMOI \cap L^p$. Then there exists a unique solution of the 2D incompressible Euler equations such that $\omega \in L^\infty _{\text{loc}}( [0,\infty);L^{\alpha}mo \cap L^p),$ where $$\alpha(t)=1-\sqrt{t} , \, \, 0\leq t \leq \delta^2,$$
$$=1-\delta, \, \, t>\delta^2.$$ 
Moreover, for some constant$C=C(\omega_0)$, we have
$$ \|\omega(t)\|_{L^{\alpha(t)}mo \cap L^p} \leq C_0e^{C_0 t}.$$
\end{theorem}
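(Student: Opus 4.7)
The approach follows the classical Yudovich scheme: derive an a priori bound for the oscillations of $\omega$ in the target $L^{\alpha(t)}mo$ scale via the transport structure, pass to the limit by regularization, and conclude uniqueness via an Osgood argument. Since $\omega(t,\cdot)=\omega_0\circ\psi_t^{-1}$ along the measure-preserving flow $\psi_t$ of $u$, the $L^p$-part of the claim is automatic ($\|\omega(t)\|_{L^p}=\|\omega_0\|_{L^p}$) and the whole difficulty is to control the $L^{\alpha(t)}mo$-norm through the flow.

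The key a priori step is a composition estimate of the form
$$
\|\omega_0\circ\psi_t^{-1}\|_{L^{\alpha(t)}mo}\ \leq\ F\bigl(V(t)\bigr)\,\|\omega_0\|_{\LMOI\cap L^p},\qquad V(t):=\int_0^t\|u(\tau)\|_{L^{1-\alpha(\tau)}L}\,d\tau,
$$
for some continuous function $F$. I would prove this by writing
$$
\av_B\bigl|\omega_0\circ\psi_t^{-1}-\av_B(\omega_0\circ\psi_t^{-1})\bigr|^2=\av_{\psi_t^{-1}(B)}\bigl|\omega_0-\av_{\psi_t^{-1}(B)}\omega_0\bigr|^2
$$
for a ball $B$ of radius $r\leq 1/2$, then applying Proposition \ref{proposition:flow} to include $\psi_t^{-1}(B)$ into a ball $B^*$ of radius $r^*\leq r\exp(\eta V(t)|\ln r|^{1-\alpha(t)})$, and telescoping over dyadic concentric balls between the scales $r$ and $r^*$ while using the $L^{1}mo$-oscillation bound on $\omega_0$ at each scale. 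The calibration $\alpha(t)=1-\sqrt{t}$ (for $t\leq\delta^2$) is precisely what keeps the resulting series convergent, since $1-\alpha(t)=\sqrt{t}$ vanishes at $t=0$ and controls the flow distortion exponent $|\ln r|^{1-\alpha(t)}$.

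Combining the composition estimate with the velocity bound of Proposition \ref{prop:velocity}, $\|u(\tau)\|_{L^{1-\alpha(\tau)}L}\leq \rho(1-\alpha(\tau))^{-1}\|\omega(\tau)\|_{L^{\alpha(\tau)}mo\cap L^p}$, produces the Volterra-type inequality
$$
V(t)\ \leq\ \rho\int_0^t\frac{\|\omega_0\|_{\LMOI\cap L^p}\,F(V(\tau))}{\sqrt{\tau}}\,d\tau,\qquad 0\leq t\leq\delta^2,
$$
which is solvable because $\tau^{-1/2}$ is locally integrable; continuation to $t>\delta^2$ with the now-constant exponent $\alpha(t)=1-\delta$ is a standard exponential Gronwall, giving the claimed bound $\|\omega(t)\|_{L^{\alpha(t)}mo\cap L^p}\leq C_0 e^{C_0 t}$. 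Existence then follows by regularizing $\omega_0$ (a contraction on $\LMOI\cap L^p$ by Remark \ref{remark}(e)), solving the smooth system, applying the uniform bound above, and extracting a limit via the equicontinuity of the velocities provided by Proposition \ref{prop:velocity}. Uniqueness is handled by an $L^2$-energy estimate on the difference of two solutions combined with their common log-Lipschitz modulus and the Osgood Lemma \ref{lemma:Osgood} with modulus $\mu(r)=r(1+|\ln r|^{1-\alpha(t)})$; the divergence of $\int_0^{a}dr/\mu(r)$ for any $\alpha(t)<1$ forces coincidence of the solutions.

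The main obstacle is the composition lemma itself. The naive majorization $\av_{\psi_t^{-1}(B)}|\omega_0-c|^2\leq(r^*/r)^2\,\av_{B^*}|\omega_0-c|^2$ loses a factor $(r^*/r)^2=\exp(2\eta V(t)|\ln r|^{1-\alpha(t)})$ that alone exceeds the tolerated loss in the exponent $\alpha$; bypassing it forces one to exploit the measure-preserving nature of $\psi_t$ through the dyadic telescoping described above, carefully matching the decay of oscillations in the $L^{1}mo$-scale against the rate of flow distortion. This balancing is exactly where the specific time-function $\alpha(t)=1-\sqrt{t}$ becomes essential, and why the loss $\delta$ can be taken arbitrarily small but not zero.
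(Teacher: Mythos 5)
Your overall scheme (conservation of the $L^p$ norm along the measure-preserving flow, a composition estimate $\|\omega_0\circ\psi_t^{-1}\|_{L^{\alpha(t)}mo}\lesssim(1+V(t))\|\omega_0\|_{\LMOI\cap L^p}$ fed back through Proposition \ref{prop:velocity} and Proposition \ref{proposition:flow}, the resulting Volterra inequality with kernel $1/\sqrt{s}$ on $[0,\delta^2]$, and the continuation by standard Gronwall for $t>\delta^2$) is exactly the structure of the paper's proof, and your explanation of why $\alpha(t)=1-\sqrt{t}$ is forced (integrability of $(1-\alpha(t))^{-1}$ at $t=0$) is correct. However, there is a genuine gap in the one step that carries all the difficulty: your proof sketch of the composition estimate. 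You correctly observe that the naive bound $\av_{\psi_t^{-1}(B)}|\omega_0-c|^2\leq (r^*/r)^2\av_{B^*}|\omega_0-c|^2$ loses the factor $e^{2\eta V|\ln r|^{1-\alpha}}$, but the remedy you propose --- telescoping averages of $\omega_0$ over dyadic concentric balls between the scales $r$ and $r^*$, using only the $\LMOI$ oscillation bound at each scale --- does not close the argument. Telescoping controls $|\av_{B(2^{-k})}\omega_0-\av_{B(2^{-k+1})}\omega_0|$, i.e.\ differences of averages over nested balls, but the quantity to estimate is the mean oscillation of $\omega_0$ restricted to the set $\psi_t^{-1}(B)$, which has measure $|B|$ yet may be scattered anywhere inside the much larger ball $B^*$ (relative measure $e^{-2\eta V|\ln r|^{1-\alpha}}$). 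An $L^1$ or $L^2$ oscillation bound at each scale gives no control on how large $|\omega_0-\av_{B^*}\omega_0|$ can be on an arbitrary subset of that small relative measure; to beat the volume factor one needs higher (in fact exponential) integrability of the oscillation.

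This is precisely what the paper supplies: in Proposition \ref{prop:3.2-bis} one applies H\"older with a large exponent $q$, so the volume loss is attenuated to $e^{\frac{2}{q}V|\ln r|^{1-\alpha}}$, then invokes the quantified John--Nirenberg inequality $\|\omega_0\|_{\LMO_q}\lesssim q\,\|\omega_0\|_{\LMOI}$ (or its sharper form, Proposition \ref{prrp}), and finally optimizes $q\simeq (1+V)|\ln r|^{1-\alpha}$; the factor $q$ is exactly what converts the $|\ln r|^{-1}$ decay of the $\LMOI$ datum into the $|\ln r|^{-\alpha}$ decay of the solution, which is the source of the (arbitrarily small but nonzero) loss of regularity. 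Your proposal identifies the obstruction but replaces this John--Nirenberg/optimization mechanism with an argument that, as stated, would fail; to repair it you should either reproduce the $L^q$--H\"older plus John--Nirenberg step, or argue directly from the exponential distribution estimate of Proposition \ref{prrp} on $B^*$ restricted to sets of measure $|B|$. The remaining parts of your proposal (regularization for existence, Osgood/Yudovich-type uniqueness) are consistent with the framework the paper relies on and are not the issue.
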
     

\begin{remark} 
\begin{itemize}
\item First, $\omega \in L^\infty _{\text{loc}}( [0,\infty);L^{\alpha}mo \cap L^p)$ means that for every $T>0$
$$ \sup_{t\in[0,T]} \|\omega(t)\|_{L^{\alpha(t)}mo \cap L^p} <\infty.$$
\item We can take any function $\alpha$ of the form
$$\alpha(t)=1-t^{\rho} , \, \, 0\leq t \leq \delta^{1/\rho},$$
$$=1-\delta, \, \, t>\delta^{1/\rho},$$ 
for some $\rho\in(0,1)$ and $\delta>0$.
 \end{itemize}
The reason that $\rho=1$ is not admissible is that the regularity loss must be enough so that $\frac{1}{1-\alpha(t)}$ is integrable near $t=0,$ as will be clear from the proof. We do not believe that this is an artifact of our proof.  
\end{remark}

To prove Theorem \ref{thm:3.1}, we will rely upon Propositions \ref{prop:velocity} and \ref{proposition:flow} as well as the following important proposition:  

\begin{proposition} \label{prop:3.2-bis}
Let fix $p\in[1,2)$, $\alpha\in(0,1)$ and $\psi$ an homeomorphism (preserving the measure) such that for every $x\neq y$ 
$$ |\psi^{\pm1}(x)-\psi^{\pm1}(y)|\leq |x-y| e^{V|\ln|x-y||^{1-\alpha}},$$
for some constant $V$.
Then for every $\omega_0 \in \LMO \cap L^p$
\begin{equation} \|\omega_0(\psi^{-1}) \|_{\LMO\cap L^p} \lesssim (1+V) \|\omega_0\|_{\LMO\cap L^p}. \end{equation}
\end{proposition}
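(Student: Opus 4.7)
My plan is to exploit the measure-preservation of $\psi$ in tandem with the two-sided modulus of continuity provided by the hypothesis.

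First, the $L^p$ part is immediate: since $\psi$ preserves Lebesgue measure, a change of variables gives $\|\omega_0\circ\psi^{-1}\|_{L^p}=\|\omega_0\|_{L^p}$, so this factor fits inside the target constant with room to spare.

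The substantive work is on the $\LMO$ seminorm. I would fix a ball $B=B(x_0,r)$ with $r\le 1/2$, set $f=\omega_0\circ\psi^{-1}$, and reduce everything to oscillations of $\omega_0$: by measure-preservation, $\av_{B}|f-c|^{2}=\tfrac{1}{|B|}\int_{\psi^{-1}(B)}|\omega_0-c|^{2}$ for every $c$. I would then sandwich $\psi^{-1}(B)$ between two concentric balls $B'\subset\psi^{-1}(B)\subset\tilde B$ of respective radii $r'\simeq r\,e^{-V|\ln r|^{1-\alpha}}$ and $\tilde r\simeq r\,e^{V|\ln r|^{1-\alpha}}$. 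The outer inclusion uses the modulus bound for $\psi^{-1}$ directly; the inner one uses the modulus bound for $\psi$, since if $z$ lies within $r'$ of $\psi^{-1}(x_0)$ then $\psi(z)$ lies within $r'e^{V|\ln r'|^{1-\alpha}}\lesssim r$ of $x_0$. Choosing $c=\av_{B'}\omega_0$ and enlarging from $\psi^{-1}(B)$ to $\tilde B$,
\[
\av_{B}|f-c|^{2}\le\frac{|\tilde B|}{|B|}\av_{\tilde B}|\omega_0-c|^{2}\lesssim\frac{|\tilde B|}{|B|}\Bigl(\av_{\tilde B}\bigl|\omega_0-\av_{\tilde B}\omega_0\bigr|^{2}+\bigl|\av_{\tilde B}\omega_0-\av_{B'}\omega_0\bigr|^{2}\Bigr).
\]
The first summand on the right is controlled by $|\ln\tilde r|^{-2\alpha}\|\omega_0\|_{\LMO}^{2}$ when $\tilde r\le 1/2$, and by the $L^2_{\mathrm{uloc}}$-contribution to $\|\omega_0\|_{\LMO\cap L^p}^{2}$ when $\tilde r>1/2$ (after covering $\tilde B$ by $O(\tilde r^{2})$ unit balls). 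The second summand I would treat by a dyadic telescope of doubling balls from $B'$ to $\tilde B$, whose length is $\simeq V|\ln r|^{1-\alpha}$ and on each step of which two consecutive averages differ by $\lesssim|\ln(\text{radius})|^{-\alpha}\|\omega_0\|_{\LMO}$.

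The main obstacle will be the constant tracking. Multiplication by the required weight $|\ln r|^{2\alpha}$ brings in the naively exponential factor $(\tilde r/r)^{2}=e^{2V|\ln r|^{1-\alpha}}$, and the whole point is that this exponential has to be absorbed by the simultaneous gain $|\ln\tilde r|^{-2\alpha}$ coming from the oscillation of $\omega_0$ on $\tilde B$, combined with the length of the telescoping chain; only such a combined bookkeeping collapses the exponential into the claimed linear factor $1+V$. A related delicate point is to handle the transition between $\tilde r\le 1/2$ and $\tilde r>1/2$ seamlessly, where the embedding $L^p\subset L^2_{\mathrm{uloc}}$ (available because $p\ge 1$ and we are on balls of bounded radius) is essential. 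Once this is carried out, taking the supremum over admissible balls $B$ and combining with the $L^p$ estimate from the first step yields the announced inequality.
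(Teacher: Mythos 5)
Your $L^p$ step and the geometric step (measure preservation plus the inclusion of $\psi^{-1}(B)$ in a concentric ball $\tilde B$ of radius $\tilde r=r\,e^{V|\ln r|^{1-\alpha}}$) coincide with the paper's. But the core of your argument has a genuine gap: you keep the oscillation in $L^2$, and then the volume ratio you must pay, $|\tilde B|/|B|=(\tilde r/r)^{2}=e^{2V|\ln r|^{1-\alpha}}$, is \emph{exponentially} large in $|\ln r|^{1-\alpha}$, which tends to infinity as $r\to 0$ since $\alpha<1$. The compensations you invoke cannot absorb it: the factor $|\ln \tilde r|^{-2\alpha}$ is not a gain at all (one has $|\ln\tilde r|\le|\ln r|$, so it is at best comparable to $|\ln r|^{-2\alpha}$, and only in the regime $V|\ln r|^{1-\alpha}\le\frac12|\ln r|$), and the telescoping chain from $B'$ to $\tilde B$ contributes an \emph{additive} term of size roughly $V|\ln r|^{1-2\alpha}\|\omega_0\|_{\LMO}$, i.e.\ only polynomial in $|\ln r|$. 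Multiplying by the weight $|\ln r|^{\alpha}$, your scheme yields a bound of order $e^{V|\ln r|^{1-\alpha}}\bigl(1+V|\ln r|^{1-\alpha}\bigr)\|\omega_0\|_{\LMO}$, which blows up as $r\to0$ and is nowhere near the claimed linear factor $1+V$. There is no bookkeeping with fixed exponent $2$ that collapses an exponential into $1+V$; identifying the obstacle is not the same as overcoming it.

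The missing idea, which is exactly the paper's device, is to measure the oscillation of $\omega_0$ on $\tilde B$ in $L^q$ with $q$ chosen large and depending on $r$ and $V$. Writing
$$\av_{B}\bigl|\omega-\av_{B}\omega\bigr|\le\Bigl(\frac{|\tilde B|}{|B|}\Bigr)^{\frac1q}\Bigl(\av_{\tilde B}|\omega_0-\av_{\tilde B}\omega_0|^{q}\Bigr)^{\frac1q}\le e^{\frac2q V|\ln r|^{1-\alpha}}\,|\ln r|^{-\alpha}\,\|\omega_0\|_{\LMO_q},$$
the volume ratio now enters only through its $q$-th root; the price is the generalized John--Nirenberg inequality (Proposition \ref{prrp}), which gives $\|\omega_0\|_{\LMO_q}\lesssim q\,\|\omega_0\|_{\LMO}$, i.e.\ a growth merely linear in $q$. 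Optimizing with $q\simeq 2(1+V)|\ln r|^{1-\alpha}$ makes the exponential factor bounded by $e$ and produces precisely the factor $(1+V)$ (times the logarithmic powers handled as in the paper). So the correct proof trades the $L^2$ oscillation for an $r$- and $V$-dependent $L^q$ oscillation plus John--Nirenberg; this is the ingredient absent from your sketch. (A minor additional remark: your inner ball $B'$ and the choice $c=\av_{B'}\omega_0$ are superfluous; taking $c=\av_{\tilde B}\omega_0$ directly, as the paper does, removes the telescoping step entirely.)
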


We then easily deduce the following corollary:

\begin{corollary} \label{coro:3.2}
Let fix $p\in[1,2)$ and $\psi$ an homeomorphism (preserving the measure) such that for every $x\neq y$ 
$$ |\psi^{\pm1}(x)-\psi^{\pm1}(y)|\leq |x-y| e^{V|\ln|x-y||^{1-\alpha}},$$
for some constants $V,\alpha$.
Then for every $\omega_0 \in \LMOI \cap L^p$,
\begin{equation} \|\omega_0(\psi^{-1}) \|_{\LMO\cap L^p} \lesssim (1+V) \|\omega_0\|_{\LMOI\cap L^p}. \label{3.3}\end{equation}
\end{corollary}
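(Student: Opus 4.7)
The plan is to deduce Corollary \ref{coro:3.2} as a direct consequence of Proposition \ref{prop:3.2-bis} combined with the monotonicity of the $\LMO$ scale recorded in Remark \ref{remark}(b). The key observation is that the hypothesis on the homeomorphism $\psi$ is already stated in terms of the same exponent $\alpha$ appearing on the right-hand side of the conclusion, while $\omega_0$ is only assumed to lie in the smaller space $\LMOI = L^{1}mo$. Since $\alpha < 1$, the inclusion $L^{1}mo \hookrightarrow L^{\alpha}mo$ is continuous (by Remark \ref{remark}(b)), with embedding constant bounded by a universal constant. Hence
$$ \|\omega_0\|_{\LMO \cap L^p} \lesssim \|\omega_0\|_{\LMOI \cap L^p},$$
and we immediately gain access to the conclusion of Proposition \ref{prop:3.2-bis} with this same $\alpha$.

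More precisely, first I would observe that the regularity assumption on $\psi$ in the corollary coincides exactly with the regularity assumption on $\psi$ in Proposition \ref{prop:3.2-bis}, so that we may quote that proposition without modification. Then I would apply Proposition \ref{prop:3.2-bis} to the function $\omega_0$, regarded as an element of $\LMO \cap L^p$ via the embedding above, to obtain
$$ \|\omega_0\circ \psi^{-1}\|_{\LMO \cap L^p} \lesssim (1+V)\,\|\omega_0\|_{\LMO \cap L^p} \lesssim (1+V)\,\|\omega_0\|_{\LMOI \cap L^p},$$
which is exactly the estimate \eqref{3.3}.

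There is essentially no substantial obstacle: the content of the corollary is already encoded in Proposition \ref{prop:3.2-bis}, and the only missing ingredient is the trivial inclusion between two members of the $\LMO$ scale. One small point worth verifying is that the embedding constant in $L^{1}mo \hookrightarrow L^{\alpha}mo$ is genuinely independent of $\alpha \in (0,1)$; this follows immediately from the definition, since the factor $|\ln r|^{\alpha}$ in the definition of the $L^{\alpha}mo$-norm is bounded above by $|\ln r|$ on the range $0 < r \leq 1/2$ used in the supremum, and the $L^{2}$-control on balls of radius $1$ is shared by both norms.
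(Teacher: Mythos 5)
Your deduction is exactly the paper's intended route: the paper proves only Proposition \ref{prop:3.2-bis} and treats the corollary as an immediate consequence, namely by applying the embedding $\LMOI\hookrightarrow\LMO$ of Remark \ref{remark}(b) and then quoting the proposition, which is precisely your argument. One cosmetic slip: the bound $|\ln r|^{\alpha}\leq |\ln r|$ fails for $r\in(e^{-1},\tfrac12]$ where $|\ln r|<1$, but since $|\ln r|^{\alpha-1}\leq (\ln 2)^{-1}$ there, the embedding constant is still uniform in $\alpha\in(0,1)$, so your conclusion stands.
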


We only prove Proposition \ref{prop:3.2-bis}. 

\begin{proof} Since $\psi$ preserves the measure, the $L^p$ norm is conserved. Hence, we only have to deal with the homogeneous part of the $\LMO$-norm.
 Let $B$ be a ball of radius $r\leq \frac{1}{2}$ then
\begin{align*}
 \av_{B} \left|f-\av_{B}f \right| & \leq  \left(\av_{B} \left|f-\av_{B}f \right|^2\right)^{\frac{1}{2}} \\
 & \leq \inf_{C} \left(\av_{B} \left|f-C \right|^2 \right)^{\frac{1}{2}}, 
\end{align*}
where the infimum is taken over all the constants $C>0$.

Applying this inequality for $f=\omega=\omega_0(\psi^{-1})$, it comes for every $q\geq 2$ and every $C>0$,
\begin{align*}
 \av_{B} \left|\omega-\av_{B}\omega \right| & \leq \left(\av_{B} \left| \omega_0(\psi) -C \right|^q \right)^{\frac{1}{q}} \\
 &  \leq  \left(\av_{\psi(B)} \left| \omega_0 -C \right|^q \right)^{\frac{1}{q}}.
\end{align*}
Due to the modulus regularity of $\psi$, if $B$ is a ball of radius $r$ then $\psi(B)$ is included in $\tilde{B}$ a ball of radius 
$$ \tilde{r} := r e^{V|\ln r|^{1-\alpha}}.$$
So, for every $C>0$
\begin{align*}
 \av_{B} \left|\omega-\av_{B}\omega \right| &  \leq  \left(\frac{|\tilde{B}|}{|B|}\right)^{\frac{1}{q}} \left(\av_{\tilde{B}} \left| \omega_0 -C \right|^p \right)^{\frac{1}{q}} \\
 & \leq  \left(\frac{\tilde{r}}{r}\right)^{\frac{2}{q}} \left(\av_{\tilde{B}} \left| \omega_0 -C \right|^q \right)^{\frac{1}{q}} \\
 & \leq e^{\frac{2}{q} V|\ln r|^{1-\alpha}} \left(\av_{\tilde{B}} \left| \omega_0 -C \right|^q \right)^{\frac{1}{q}}.
\end{align*}
Then we may chose $C=\av_{\tilde{B}} \omega_0$ and using the $\LMO_q$ regularity of $\omega_0$, we obtain
\begin{align*}
 \av_{B} \left|\omega-\av_{B}\omega \right| \leq e^{\frac{2}{q}V|\ln r|^{1-\alpha}} |\ln {r}|^{-\alpha} \|\omega_0\|_{\LMO_q},
\end{align*}
where $\LMO_q$ is the $\LMO$-space equipped with the equivalent norm involving oscillations in $L^q$.
Using the John-Nirenberg inequality, we know that\footnote{We note that using Proposition \ref{prrp}, this inequality may be weakened with a growth of order $q^\delta$ for $\delta>0$ and maybe just some logarithmique growth on $q$. Unfortunately, this improvment does not really help to get around the (as small as we want) loss of regularity from the initial condition $\omega \in \LMOI$ and the solution. We just point out that taking into account this improvment, the solution can be shown to live into a Morrey-Campanato space smaller than $\LMO$ with only a $\log-\log$ loss of regularity. Without details, we could bound the oscillation on a ball of radius $r$ by $\frac{\log(|\log(r)|)}{|\log(r)|}$ instead of $|\log(r)|^{\delta-1}$ as we are doing here.}
$$ \|\omega_0\|_{\LMO_q} \lesssim q \|\omega_0\|_{\LMOI}$$
which yields
\begin{align*}
 \av_{B} \left|\omega-\av_{B}\omega \right| \lesssim q e^{\frac{2}{q}V|\ln r|^{1-\alpha}} |\ln {r}|^{-\alpha} \|\omega_0\|_{\LMO}.
\end{align*}
Optimizing in $q\geq 2$ (which means to chose $q= 2(V+1)|\ln r|^{1-\alpha}$) gives
\begin{align*}
 \av_{B} \left|\omega-\av_{B}\omega \right| \lesssim (1+V)|\ln r|^{1-\alpha} |\ln {r}|^{-1} \|\omega_0\|_{\LMOI}.
\end{align*}
Hence
\begin{align*}
 |\ln(r)|^{\alpha} \av_{B} \left|\omega-\av_{B}\omega \right| \lesssim c(1+V) \|\omega_0\|_{\LMOI}.
\end{align*}
\end{proof}

Before proving Theorem \ref{thm:3.1}, we just state the following lemma (which is a variant of standard Gronwall Lemma):

\begin{lemma}
Let $f$ be a smooth function defined on $[0,1]$ satisfying the following inequality:
\begin{equation} f(t) \leq A+B\int_{0}^{t}\frac{1}{\sqrt{s}} f(s)ds. \end{equation}

Then, $f$ satisfies the following (sharp) a-priori growth estimate:

\begin{equation} f(t) \leq A \exp(2B\sqrt{t}). \end{equation}
\end{lemma}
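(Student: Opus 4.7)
The plan is to reduce the weighted inequality to the classical (unweighted) Gronwall inequality by an appropriate change of variables that absorbs the singular weight $1/\sqrt{s}$, and then read off the advertised bound.

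First I would perform the substitution $\sigma = 2\sqrt{s}$, which has the virtue that $d\sigma = ds/\sqrt{s}$. Setting $h(\sigma) := f(\sigma^2/4)$ transforms the integral on the right-hand side into
\[
\int_0^t \frac{f(s)}{\sqrt{s}}\,ds \;=\; \int_0^{2\sqrt{t}} h(\tau)\,d\tau.
\]
The integrability of $1/\sqrt{s}$ at $0$ is exactly what makes this substitution legitimate, and it is the place where the assumption that the weight is $1/\sqrt{s}$ (rather than, say, $1/s$) really matters.

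With this change of variables, the hypothesis becomes the classical Volterra-type inequality
\[
h(\sigma) \;\leq\; A \;+\; B \int_0^\sigma h(\tau)\,d\tau, \qquad \sigma \in [0, 2],
\]
so that the standard Gronwall lemma (applied to the continuous function $h$ on a bounded interval) yields $h(\sigma) \leq A e^{B\sigma}$. Substituting back $\sigma = 2\sqrt{t}$ gives $f(t) \leq A\exp(2B\sqrt{t})$, which is the desired estimate.

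There is essentially no obstacle here: the only conceptual point to keep in mind is that the substitution is what turns a singular kernel Gronwall inequality into a regular one, and sharpness of the bound follows because equality propagates through the substitution (the extremal $f(t) = A\exp(2B\sqrt{t})$ is mapped to the extremal $h(\sigma) = A e^{B\sigma}$ of the classical Gronwall lemma). If one preferred to avoid the change of variables, an alternative is to introduce $g(t) := A + B\int_0^t f(s)/\sqrt{s}\,ds$, observe $g'(t) \leq (B/\sqrt{t})\,g(t)$ on $(0,1]$, and integrate this differential inequality from $\varepsilon$ to $t$ and pass to the limit $\varepsilon \to 0^+$ using the continuity of $g$ at $0$; but the first route is cleaner.
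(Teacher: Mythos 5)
Your argument is correct, and it actually supplies a proof that the paper itself omits: the lemma is stated there without proof, merely labelled ``a variant of standard Gronwall Lemma.'' The route implicitly intended by the authors is to invoke their Osgood lemma (Lemma \ref{lemma:Osgood}) with $\mu(r)=r$, $\gamma(s)=B/\sqrt{s}$ and $c(t)=A$, which gives $\ln\bigl(A/\rho(t)\bigr)\ge -2B\sqrt{t}$, i.e.\ exactly $f(t)\le A\exp(2B\sqrt{t})$; your second, ``avoid the change of variables'' remark is essentially this argument in disguise. Your primary route — the substitution $\sigma=2\sqrt{s}$, $h(\sigma)=f(\sigma^2/4)$, reducing to the classical unweighted Gronwall inequality on $[0,2]$ — is a clean alternative and is fully rigorous: the change of variables is justified because $f$ is bounded on $[0,1]$ and $s^{-1/2}$ is integrable at $0$, and $h$ is continuous, so classical Gronwall applies and back-substitution gives the stated bound. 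Two small points worth making explicit if you write this up: the conclusion (and any Gronwall-type argument) uses $B\ge 0$, which is the relevant case here since $B$ is a norm-type constant; and in your alternative route one should either assume $A>0$ or replace $A$ by $A+\eta$ and let $\eta\to 0^+$ so that $\log g$ is well defined. Your observation on sharpness (equality propagates to the solution of $f'=Bf/\sqrt{t}$, $f(0)=A$) is also correct.
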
 

\begin{proof}[Proof of Theorem \ref{thm:3.1}]
Now, use Propositions \ref{prop:velocity}, \ref{proposition:flow} and Corollary \ref{coro:3.2} and we get the following a-priori estimate: 

\begin{equation} \|\omega(t)\|_{L^{\alpha(t)}mo \, \cap L^2} \lesssim \|\omega_{0}\|_{\LMOI\cap \, L^2} \left(1+C\int_{0}^{t} \frac{1}{1-\alpha(s)} \|\omega(s)\|_{L^{\alpha(s)}mo\, \cap L^2} \, ds \right). \label{3.4} \end{equation}
We are free to choose $\alpha(t)$ as we wish in order to get something useful out of the previous inequality. We wish to choose $\alpha$ so that $\alpha(0)=1.$ However, in order that inequality (\ref{3.4}) not be an empty inequality, we will need $\alpha(t)$ to decrease very sharply near $t=0$ in such a way that $\frac{1}{1-\alpha(t)}$ is integrable near $t=0.$ To simplify things, we will define $\alpha(t)$ in the following way:
$$\alpha(t)=1-\sqrt{t}, \quad 0\leq t\leq \delta^2,$$
$$\alpha(t)=1-\delta,  \quad t>\delta^2.$$
Note that $\alpha$ is continuous on $[0,\infty).$

Using the previous Lemma in conjunction with estimate (\ref{3.4}), we see that $\omega$ satisfies the following a-priori estimate on $[0,\delta^2]:$
\begin{equation} \|\omega(t)\|_{L^{\alpha(t)} mo \cap L^2} \leq \|\omega_{0}\|_{\LMOI} \exp(C\delta \|\omega_0\|_{\LMOI}), \, \, t\in [0,\delta^2]. \label{4} \end{equation}
To control $\omega(t)$ on $(\delta^2,\infty)$ we are going to use the standard Gronwall lemma as follows. For $t\geq \delta^2,$ estimate (\ref{3.4}) tells us that 
 \begin{align} 
 \lefteqn{\|\omega(t)\|_{L^{1-\delta}mo \, \cap L^2}} & & \nonumber \\
 & &  \lesssim \|\omega_{0}\|_{\LMOI\cap \, L^2} \left(1+C\int_{0}^{\delta^2} \frac{1}{\sqrt{s}} \|\omega(s)\|_{L^{\alpha(s)}mo\, \cap L^2} \, ds +C\int_{\delta^2}^{t} \frac{1}{\delta^2} \|\omega(s)\|_{L^{1-\delta}mo\, \cap L^2} \, ds \right). \label{5} \end{align}
Now, the first integral in (\ref{5}) is controlled by (\ref{4}). Therefore, we can apply the standard Gronwall lemma to control $\omega$ in $L^{\infty}_{\text{loc}} ( [0,\infty); L^{\alpha}mo\cap L^2),$ with $\alpha(t)$ chosen as above. In particular, we lose only an arbitrarily small amount of regularity when beginning with data in $\LMOI \cap L^2 .$ 
\end{proof}

\subsection{The inviscid limit when $\omega_{0} \in \LMOI$, Theorem \ref{il1-bis}}

In this section we will prove a sharper result on the rate of convergence in the inviscid limit of the Navier-Stokes equations in Theorem \ref{il1} when the initial data is taken in Lmo. Indeed, in the proof of Theorem \ref{il1}, all we used is an a-priori estimate on $u$ in $\BMO$. However, when we take initial data in $\LMOI$, we will be able to use a-priori estimates on $L^{1-\delta} mo$ for all $\delta>0.$ This fact, coupled with a sharper version of Lemma \ref{interpo} in the $\LMO$ case will allow us to give a better rate than the $\epsilon^{e^{-t}}$ from Theorem \ref{il1}.

In particular, we will be able to prove the following theorem.

\begin{theorem} \label{il2} 
Assume $p\in [1,2)$. Let  $u_0\in L^2(\mathbb R^2)$ a divergence free vector fields such that $\omega_0\in \LMOI \cap L^p$. Then, for every $T>0$ and for every $\delta\in(0,1)$ there exist $C=C(u_0,\delta)$ and $\varepsilon_0=\varepsilon_0(u_0,T,\delta)$ such that
$$
\|u^\varepsilon(t)-u(t)\|_{L^2(\mathbb R^2)}\leq (CT\varepsilon)^{\frac{1}{2}e^{\beta(t)}},\qquad \forall t\in [0,T] ,\quad\forall \varepsilon\leq \varepsilon_0,
$$
with
$$ \beta(t) = \max(1-\delta,(1-\frac{e^{C_0t}-1}{2})^{1\over \delta}).$$
\end{theorem}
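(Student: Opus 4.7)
I would follow the same blueprint as the proof of Theorem \ref{il1}, upgrading two ingredients to exploit the sharper regularity produced by Theorem \ref{thm:3.1}. The starting energy identity for $U^\varepsilon=u^\varepsilon-u$ is unchanged:
$$\frac{d}{dt}\|U^\varepsilon\|_{L^2}^2 \le I + II,\quad II\le C_0\varepsilon,\quad I\le \|\nabla u\|_{L^q}\|U^\varepsilon\|_{L^{2q'}}^2,$$
where $II$ is controlled as before using $L^2$-continuity of the Riesz transforms and the uniform $L^2$-bound on vorticities, and H\"older yields the bound on $I$ for every $q\ge 2$.

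The first upgrade is a sharper analogue of Lemma \ref{interpo} adapted to the $\LMO$-scale, of the form
$$\|f\|_{L^q}\lesssim q^{c(\alpha)}\|f\|_{L^2\cap\LMO},\qquad q\ge 2,$$
with an exponent $c(\alpha)$ strictly smaller than the BMO-exponent $1$ when $\alpha>0$. This is obtained by rerunning the Whitney-decomposition/maximal-function scheme of Lemma \ref{interpo}, but bounding local oscillations by the $\LMO$-control (oscillation $\lesssim|\ln r_Q|^{-\alpha}\|f\|_\LMO$ on a cube $Q$) rather than the BMO-control, and by invoking the refined John--Nirenberg inequality for Morrey--Campanato spaces alluded to in the footnote following the proof of Proposition \ref{prop:3.2-bis}. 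The second upgrade is the a priori bound of Theorem \ref{thm:3.1}, namely $\|\omega(t)\|_{L^{\alpha(t)}mo\cap L^p}\le C_0 e^{C_0 t}$ with $\alpha(t)=1-\sqrt t$ on $[0,\delta^2]$ and $\alpha(t)=1-\delta$ thereafter. Since $\nabla u$ is a Riesz transform of $\omega$ and Riesz transforms preserve $L^p\cap\LMO$ by Proposition \ref{prop:riesz}, the same bound transfers to $\nabla u$. Inserting all this into the H\"older bound for $I$, together with the $L^\infty$/$L^2$ interpolation $\|U^\varepsilon\|_{L^{2q'}}^2\lesssim \|U^\varepsilon\|_{L^2}^{2-2/q}$ (justified exactly as in the proof of Theorem \ref{il1}, using the uniform $L^p\cap L^3$ bounds on $\omega^\varepsilon,\omega$), I obtain, for $g^\varepsilon(t):=\|U^\varepsilon(t)\|_{L^2}^2$ and every $q\ge 2$,
$$\dot g^\varepsilon(t)\le C_0\varepsilon + C_0\, q^{c(\alpha(t))}\,e^{C_0 t}\,(g^\varepsilon)^{1-1/q}.$$

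Optimizing in $q$ by the choice $q=-\ln g^\varepsilon$ (whose validity, $g^\varepsilon\le e^{-2}$, is maintained via a continuity/bootstrap argument identical to the one in the proof of Theorem \ref{il1}, determining $\varepsilon_0$) reduces this to the scalar inequality
$$\dot g^\varepsilon \le C_0\varepsilon + C_0\, e^{C_0 t}\,(-\ln g^\varepsilon)^{c(\alpha(t))}\,g^\varepsilon,$$
which is then integrated via the Osgood Lemma \ref{lemma:Osgood}. On the long interval $t\ge\delta^2$, where $\alpha(t)=1-\delta$ is constant, Osgood applies directly with an explicit $\mu$ depending on $c(1-\delta)$; computing the antiderivative $\mathcal M(x)=\int_x^a dr/\mu(r)$ and inverting yields a bound of the desired form $g^\varepsilon(t)\le (CT\varepsilon)^{\beta(t)}$, after absorbing universal constants into $C$ and $\varepsilon_0$. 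On the short initial window $[0,\delta^2]$, where $\alpha(t)=1-\sqrt t$ and the factor $(-\ln g)^{c(\alpha(t))}$ is accordingly suppressed, a direct integration produces a rate close to the optimal one at $t=0$; the two regimes match continuously at $t=\delta^2$, producing the $\max$ structure in $\beta(t)$. The main obstacles are (i) establishing the sharper $\LMO$-interpolation inequality with the precise power $c(\alpha)$ that leads to the exponent $1/\delta$ appearing in the statement, and (ii) inverting the Osgood antiderivative carefully enough to extract the exact form of $\beta(t)$, while handling the time-dependent $\alpha(t)$ by splitting the integration at $t=\delta^2$ and gluing the two pieces.
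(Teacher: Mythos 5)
Your skeleton is the paper's: the energy estimate from Theorem \ref{il1}, a sharpened interpolation inequality replacing Lemma \ref{interpo}, the choice $q=-\ln g^\varepsilon$, and an Osgood-type integration via Lemma \ref{lemma:Osgood}, with $\varepsilon_0$ fixed by the same bootstrap. The differences are in the two points you flag as obstacles, and there the paper is simpler than your plan. First, the precise interpolation constant is not left open: the generalized John--Nirenberg inequality (Proposition \ref{prrp}, the result of Caffarelli--Huang/Spanne you allude to) gives $\|f\|_{\LMO_r}\lesssim r^{1-\alpha}\|f\|_{\LMO}$, hence $\|f\|_{L^q}\lesssim q^{1-\alpha}\|f\|_{L^2\cap\LMO}$, i.e. $c(\alpha)=1-\alpha$; this is exactly what produces $|\ln g^\varepsilon|^{1-\alpha}g^\varepsilon$ in the differential inequality and, after Osgood, the bound $|\ln g^\varepsilon|^{\alpha}\ge |\ln(C_0t'\varepsilon)|^{\alpha}-(e^{C_0t}-1)$ whose inversion yields $\beta$. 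Second, the paper does not carry the time-dependent exponent $\alpha(t)$ of Theorem \ref{thm:3.1} through the ODE: since $\alpha(t)\ge 1-\delta$ for all $t$ and the $\LMO$ scale is monotone (Remark \ref{remark}), the a-priori bound $\|\omega(t)\|_{L^{\alpha}mo\cap L^p}\le C_0e^{C_0t}$ holds with a \emph{fixed} $\alpha<1$ on all of $[0,T]$, so a single Osgood application with $\mu(r)=r|\ln r|^{1-\alpha}$ suffices; your splitting at $t=\delta^2$ and gluing is workable (on $[0,\delta^2]$ one can anyway bound $(-\ln g)^{1-\alpha(t)}\le(-\ln g)^{\delta}$ because $-\ln g\ge 2$) but unnecessary. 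Relatedly, the $\max$ in $\beta(t)$ does not come from matching the two time regimes: it comes from choosing $\alpha$ according to $\delta$ and $\varepsilon_0=\varepsilon_0(u_0,T,\delta)$ so small that $1-\frac{e^{C_0t}-1}{|\ln(C_0t'\varepsilon)|^{\alpha}}$ stays bounded below by $(1-\delta)^{\alpha}$ on all of $[0,T]$, the second entry of the $\max$ being the regime where the explicit bracket dominates. So your route would get there, but the two ``obstacles'' you list are resolved precisely by Proposition \ref{prrp} and by this fixed-$\alpha$, small-$\varepsilon_0$ bookkeeping rather than by a two-regime gluing.
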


To prove this theorem we will rely upon a generalized version of the John-Nirenberg lemma in $\LMO$.

\begin{proposition} \label{prrp}
Let $\alpha\in[0,1)$, then there exists $C_{1}$ and $C_{2}$  depending only upon the dimension and $0\leq \alpha<1$ such that given any cube $Q$ in $\mathbb{R}^{n}$, any function $f\in\LMO$ and any $\lambda>0,$

$$ | \{ x\in Q : |f(x)-\av_{Q} f| >\lambda  \} | \leq C_{1}\exp (-\frac{C_{2}}{\|f\|_{\LMO}} \lambda^{\frac{1}{1-\alpha}}) |Q|.$$
\end{proposition}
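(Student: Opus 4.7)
The plan is to adapt the classical Calderón--Zygmund stopping-time proof of the John--Nirenberg inequality for $\BMO$, exploiting the scale-dependent improvement on the oscillation provided by the $\LMO$ norm. Indeed, on a ball of radius $r\leq 1/2$ the average oscillation is bounded by $|\ln r|^{-\alpha}\|f\|_{\LMO}$, rather than by a constant as in $\BMO$. Iterating the decomposition therefore produces cubes whose radii shrink geometrically, forcing a much smaller mean increment at each generation, and this is precisely the mechanism that converts the classical exponential rate $e^{-c\lambda}$ into the stretched-exponential rate $\exp(-c\,\lambda^{1/(1-\alpha)})$.

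By homogeneity I would assume $\|f\|_{\LMO}=1$ and fix a cube $Q$ of side $\leq 1/2$. First I would apply the Calderón--Zygmund stopping-time decomposition to $f-\av_Q f$ at a threshold $A>1$ (to be fixed), yielding a pairwise-disjoint family $\{Q_j^{(1)}\}\subset Q$ with $\sum_j |Q_j^{(1)}|\leq A^{-1}|Q|$, with $|f-\av_Q f|\leq A$ almost everywhere on $Q\setminus \bigcup_j Q_j^{(1)}$, and with $|\av_{Q_j^{(1)}}f-\av_Q f|\lesssim A$. Each child cube has side at most half that of its parent, so iterating $k$ times produces cubes $\{Q_j^{(k)}\}$ of side $\lesssim 2^{-k}\ell(Q)$ whose union has measure at most $A^{-k}|Q|$.

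The new ingredient is that, at generation $k$, the cubes $Q_j^{(k)}$ satisfy $|\ln(\mathrm{side})|\gtrsim k$, so the $\LMO$ hypothesis yields $\av_{Q_j^{(k)}}|f-\av_{Q_j^{(k)}} f|\lesssim k^{-\alpha}$. I would therefore run the next CZ step on $Q_j^{(k)}$ at the shrinking threshold $A_k:=A\,k^{-\alpha}$ rather than at $A$, which preserves the geometric measure decay $A^{-k}$ while bounding the shift of means between consecutive generations by $CAk^{-\alpha}$. A point $x\in Q$ with $|f(x)-\av_Q f|>\lambda$ must then lie in the $k$-th generation set as long as the telescoping bound
\[
\sum_{j=1}^{k} CA_j \;=\; CA\sum_{j=1}^{k} j^{-\alpha} \;\lesssim\; \frac{CA}{1-\alpha}\,k^{1-\alpha}
\]
has not yet reached $\lambda$. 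Inverting this gives $k\gtrsim c_\alpha\,\lambda^{1/(1-\alpha)}$, and combined with the measure bound $A^{-k}$ this yields $|\{x\in Q:|f-\av_Q f|>\lambda\}|\leq C_1\exp(-C_2\lambda^{1/(1-\alpha)})|Q|$, which is the claim after undoing the normalization. The regimes $\lambda\lesssim 1$ and cubes of side $>1/2$ are handled trivially using Tchebychev and the $L^2_{\mathrm{uloc}}$ part of the $\LMO$ norm respectively.

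The main obstacle is to calibrate the iteration so that the two competing requirements coexist uniformly in $k$: the threshold $A_k$ must shrink at rate $k^{-\alpha}$ to actually exploit the $\LMO$ gain at scale $2^{-k}$, yet remain large enough (relative to the $L^2$-version of the oscillation at that scale) to retain a genuine geometric measure decay at every stage. The exponent $1/(1-\alpha)$ emerges precisely from the critical balance $\sum j^{-\alpha}\sim k^{1-\alpha}$ between these two opposing constraints, and one must ensure that the universal constants from the CZ decomposition (in particular the factor $2^n$) do not accumulate with $k$ -- they do not, once absorbed into the fixed absolute constant $A$.
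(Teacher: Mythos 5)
Your argument is the standard iterated Calder\'on--Zygmund proof with generation-dependent thresholds $A\,k^{-\alpha}$, which is exactly the adaptation of the John--Nirenberg proof that the paper invokes: it gives no proof of its own, referring instead to Spanne and to Caffarelli--Huang (Remark 2.4), so your route and the paper's coincide. One small caveat: for cubes of side $>\frac12$ the $L^2_{\mathrm{uloc}}$ part alone yields only Chebyshev-type decay, so the correct "trivial" step is to tile $Q$ by unit-scale cubes, apply the small-scale estimate on each tile, and use the $L^2_{\mathrm{uloc}}$ bound only to compare $\av_{q_i}f$ with $\av_{Q}f$ (absorbing the regime $\lambda\lesssim \|f\|_{\LMO}$ into the constant $C_1$).
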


The proof of this proposition can be found, for example, in the paper of Caffarelli and Huang \cite[Remark 2.4]{CaH} and in the work of Spanne \cite{Spanne}. The proof is a simple adaptation of the original proof of the John-Nirenberg inequality using the Calder\'{o}n-Zygmund decomposition.  

Based upon this Proposition, we have the following John-Nirenberg inequality (uniformly in $r\gg 1$)
$$
\|f\|_{\LMO_r}\lesssim r^{1-\alpha} \|f\|_{\LMO},
$$
where $\LMO_r$ stands for the $\LMO$-norm with oscillations controlled in $L^r$.
Then, one can use the proof of Lemma \ref{interpo} to prove:

\begin{lemma} There exists $C>0$ depending upon $\alpha\in[0,1)$ such that the following estimate holds for every $p\in [2,+\infty[$ and  every 
smooth function $f$
$$ \|f\|_{L^p}\leq C p^{1-\alpha}  \|f\|_{L^2\cap \LMO}. $$
\end{lemma}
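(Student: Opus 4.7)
The plan is to rerun the argument of Lemma \ref{interpo} almost verbatim, replacing every appearance of $\BMO$ by $\LMO$, and only at the very end invoking the sharper John--Nirenberg bound available in the $\LMO$ scale. Denote by $\LMO_r$ the space $\LMO$ equipped with oscillations measured in $L^r$ rather than $L^2$; Proposition \ref{prrp} guarantees that this is an equivalent norm and provides the quantitative relation recalled just above the lemma, namely $\|f\|_{\LMO_r}\lesssim r^{1-\alpha}\|f\|_{\LMO}$.

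First I would repeat the Calder\'on--Zygmund step. Let $M$ denote the Hardy--Littlewood maximal operator, set $E_\lambda=\{M(f)>\lambda\}$, and take a Whitney cover $(Q_i)$ of $E_\lambda$, so that $\av_{Q_i}|f|\leq 4\lambda$ for every $i$. The same oscillation estimate as in Lemma \ref{interpo} gives on each Whitney cube
$$\Bigl(\int_{Q_i}|f(x)|^{r}\,dx\Bigr)^{1/r}\leq |Q_i|^{1/r}\bigl(\|f\|_{\LMO_r}+\lambda\bigr)\leq 2|Q_i|^{1/r}\|f\|_{\LMO_r},$$
whenever $\lambda\leq \|f\|_{\LMO_r}$. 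Summing over $i$ and using the weak-$L^2$ estimate $|E_\lambda|\lesssim \lambda^{-2}\|f\|_{L^2}^2$ produces
$$\int_{E_\lambda}|f|^{r}\,dx\leq C^{r}\lambda^{-2}\|f\|_{L^2}^{2}\|f\|_{\LMO_r}^{r},$$
while on $E_\lambda^c$ the pointwise bound $|f|\leq \lambda$ yields $\int_{E_\lambda^c}|f|^{r}\leq \lambda^{r-2}\|f\|_{L^2}^{2}$.

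Second, I would optimize in $\lambda$ by setting $\lambda=\|f\|_{\LMO_r}$. Collecting the two contributions yields the interpolation inequality
$$\|f\|_{L^r}\lesssim \|f\|_{L^2}^{2/r}\|f\|_{\LMO_r}^{1-2/r},$$
which is the analogue in the $\LMO$ scale of the bound derived in Lemma \ref{interpo}. Finally I substitute the $\LMO$ John--Nirenberg estimate $\|f\|_{\LMO_r}\lesssim r^{1-\alpha}\|f\|_{\LMO}$, applied with $r=p\geq 2$, to obtain
$$\|f\|_{L^p}\lesssim p^{(1-\alpha)(1-2/p)}\,\|f\|_{L^2}^{2/p}\,\|f\|_{\LMO}^{1-2/p}\leq C\,p^{1-\alpha}\,\|f\|_{L^2\cap\LMO},$$
using that $(1-\alpha)(1-2/p)\leq 1-\alpha$ for $p\geq 2$. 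The only delicate point is that the definition of $\LMO$ formally controls oscillations only on balls of radius $\leq 1/2$, but the Whitney cubes may be arbitrarily large; this is harmless because Proposition \ref{prrp} is stated for \emph{every} cube, so the John--Nirenberg bound, and consequently the passage from $\BMO_r$ to $\LMO_r$, goes through without modification.
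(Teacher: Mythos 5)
Your proposal is correct and follows exactly the route the paper intends: the paper proves this lemma by invoking the John--Nirenberg bound $\|f\|_{\LMO_r}\lesssim r^{1-\alpha}\|f\|_{\LMO}$ deduced from Proposition \ref{prrp} and then simply rerunning the Calder\'on--Zygmund/Whitney argument of Lemma \ref{interpo} with $\BMO_r$ replaced by $\LMO_r$, which is precisely what you do, including the final optimization $\lambda=\|f\|_{\LMO_r}$ and the exponent bookkeeping $(1-\alpha)(1-2/p)\leq 1-\alpha$.
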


\begin{proof}[Proof of Theorem \ref{il2}] We recall that $\alpha$ can be chosen in $(0,1)$, so we will fix it later according to $\delta \in(0,1)$.
Theorem \ref{il2} then follows from the proof of Theorem \ref{il1}, replacing Lemma \ref{interpo} by the last one. With the same notations, $g^\varepsilon(t):=\| U^\varepsilon(t)\|^2_{L^2}$ satisfies the following differential inequality:
$$ \dot{g}^\varepsilon(t) \leq  C_0\big(\varepsilon+e^{C_0t}|\ln(g^\varepsilon(t))|^{1-\alpha} g^\varepsilon(t)\big). $$
Hence,
$$ {g}^\varepsilon(t) \leq C_0\varepsilon t+\int_0^t C_0e^{C_0t"}|\ln(g^\varepsilon(t"))|^{1-\alpha} g^\varepsilon(t")dt". $$
Then using Osgood Lemma (Lemma \ref{lemma:Osgood}), it comes
$$ -|\ln({g}^\varepsilon)|^\alpha  +|\ln(C_0 t' \varepsilon))|^\alpha  \leq e^{C_0t}-1,\qquad \forall t\leq t'<T^\varepsilon. $$
For every $\delta<1$, it comes for $\epsilon\leq \epsilon_0(u_0,T,\delta)$ small enough
\begin{align*}
|\ln({g}^\varepsilon)|^\alpha & \geq   |\ln(C_0 t' \varepsilon))|^\alpha -  (e^{C_0t}-1)\\
 & \geq  \beta(t)^\alpha|\ln(C_0 t' \varepsilon))|^\alpha,
\end{align*}
with
$$ \beta(t):=  \max(1-\delta,(1-\frac{e^{C_0t}-1}{2})^{1\over \delta}) \leq \left(1 - \frac{e^{C_0t}-1}{|\ln(C_0 t' \varepsilon))|^\alpha}\right)^{1\over \alpha}.$$
We fix $\alpha\in(0,1)$ and $\epsilon_0(u_0,T,\delta)$ according to $\delta$ such that $\beta$ satisfies this previous inequality. Then we conclude by reproducing the same reasoning as for Theorem \ref{il1}, with these slight modifications.
\end{proof}

\section{Uniform estimates for solutions of Navier-Stokes equation with a vorticity in $\LMO$ for $\alpha>1$} \label{sec:uniform}

In this section, we aim to describe more results when we assume that the vorticity is more regular, and more precisely when $\omega_0\in \LMO$ for some $\alpha>1$.

\begin{remark} \label{remark:ulip} First when the velocity $u$ is associated to a $\LMO$ vorticity by the Biot-Savart law (\ref{eq:biot-savart}) then if $\alpha>1$ we deduce by combining Lemma \ref{lemma:injection} and Proposition \ref{prop:riesz} that $u$ is Lipschitz.
\end{remark}

We first aim to prove a slight improvement of results in \cite{BK}, about composition in $\LMO$-spaces by a bi-Lipschitz measure-preserving map.

\subsection{Composition in $\LMO$ by a bi-Lipschitz map} 

\begin{theorem} 
\label{thm:compo-bis} 
In $\R^d$, there exists a constant $c:=c(d)$ such that for every function $f\in \BMO$ and every measure-preserving bi-Lipschitz homeomorphism $\phi$, we have
$$ \| f\circ \phi\|_{\BMO} \leq \|f\|_{\BMO} \left[1+c \log(K_\phi) \right],$$
where
$$ K(\phi)=K_\phi := \sup_{x\neq y} \, \max\left( \frac{|\phi(x)-\phi(y)|}{|x-y|},\frac{|x-y|}{|\phi(x)-\phi(y)|} \right) \geq 1.$$ 

\end{theorem}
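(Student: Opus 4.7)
The plan is to reduce the BMO-norm estimate to a pointwise oscillation bound on each ball and then sharpen the John--Nirenberg step so as to isolate the constant~$1$ in front of $\|f\|_{\BMO}$. Given $B=B(x_0,r)$, the measure-preserving property of $\phi$ yields directly $\av_B(f\circ\phi)=\av_{\phi(B)}f$, and hence
$$\av_B|f\circ\phi - \av_B(f\circ\phi)| = \av_{\phi(B)}|f - \av_{\phi(B)}f|.$$
The bi-Lipschitz condition sandwiches the image between the concentric balls $B(\phi(x_0),r/K_\phi)\subset\phi(B)\subset\tilde B:=B(\phi(x_0),K_\phi r)$, with $|\tilde B|/|\phi(B)|\leq K_\phi^d$.

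First I would bound $\av_{\phi(B)}|f-\av_{\tilde B}f|$ via the exponential John--Nirenberg inequality on $\tilde B$, together with a layer-cake decomposition that uses the trivial bound $|\phi(B)|$ up to an optimally chosen threshold and the JN tail beyond. Optimization produces a control of the shape $C(1+c\log K_\phi)\|f\|_{\BMO}$ with some absolute constant $C=C(d)$, which, after a triangle inequality to replace $\av_{\tilde B}f$ by $\av_{\phi(B)}f$, reproduces the earlier result of \cite{BK} but not the sharp one.

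The principal novelty, and the chief obstacle, is to reduce the outer constant $C$ to exactly~$1$. The point is that at $K_\phi=1$ the map $\phi$ is an isometry and $\|f\circ\phi\|_{\BMO}=\|f\|_{\BMO}$ trivially, so any sharp bound must degenerate to $\|f\|_{\BMO}$ as $K_\phi\to 1$. I would therefore split the analysis into two regimes. For $K_\phi$ bounded away from $1$, say $K_\phi\geq K_0$ with $K_0=K_0(d)$ a universal threshold, the logarithmic term $c\log K_\phi$ is bounded below and absorbs the extra constant, so $C(1+c\log K_\phi)$ can be rewritten as $1+c'\log K_\phi$. For $K_\phi\leq K_0$, factor $\phi=R\circ\psi$ with $R$ an isometry (preserving $\|f\|_{\BMO}$) and $\psi$ measure-preserving bi-Lipschitz with $K_\psi=K_\phi$ close to~$1$; then compare $\av_{\psi(B)}|f-\av_{\psi(B)}f|$ with $\av_B|f-\av_B f|$ by writing the integrand over the symmetric difference $\psi(B)\triangle B$, whose relative volume is $O(K_\phi^d-1)=O(\log K_\phi)$, and control the integral there by the exponential JN tail. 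This should yield a correction of order $\log K_\phi\cdot\|f\|_{\BMO}$ on top of the leading $\|f\|_{\BMO}$, as required.

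The main obstacle I expect is precisely this perturbative step. Although the layer-cake/JN computation already gives the correct logarithmic scaling in $K_\phi$, it inevitably produces an absolute multiplicative constant in front of $\|f\|_{\BMO}$, which contradicts the exact equality $\|f\circ\phi\|_{\BMO}=\|f\|_{\BMO}$ at $K_\phi=1$. Producing the leading coefficient exactly equal to~$1$ requires the full strength of the exponential John--Nirenberg inequality combined with measure-preservation, so that the deviation from an isometry is measured only on the small symmetric difference between $\psi(B)$ and~$B$, rather than on the whole enclosing ball~$\tilde B$.
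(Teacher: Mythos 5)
Your reduction to a ball-by-ball oscillation estimate and your treatment of the regime where $K_\phi$ stays away from $1$ (absorbing the outer constant into the logarithm, as in \cite{BK}) are fine; the paper does exactly this for $K_\phi\ge 2$. The gap lies in the near-isometry regime, which is the whole content of the theorem. First, the factorization $\phi=R\circ\psi$ with a \emph{single} global isometry $R$ and $|\psi(B)\triangle B|\lesssim (K_\phi^{d}-1)\,|B|$ for \emph{every} ball $B$ is not available: applied to balls $B(x_0,r)$ with $r\to 0$ it forces $|\psi(x_0)-x_0|\lesssim (K_\phi-1)\,r\to 0$, i.e.\ $\psi=\mathrm{id}$. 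A measure-preserving bi-Lipschitz map with $K_\phi$ close to $1$ (e.g.\ a slow shear $(x_1,x_2)\mapsto(x_1+\eps\sin x_2,\,x_2)$) is not close to any one isometry at all centers and scales, so the comparison must be recentered ball by ball, against $\tilde B:=B(\phi(x_0),r)$ --- which your initial sandwich already provides. Second, even after that repair, the layer-cake/John--Nirenberg bound over a set $E\subset K_\phi\tilde B$ of measure $\sim(K_\phi-1)|B|$ only gives $\frac{1}{|B|}\int_E|f-\av_{K_\phi\tilde B}f|\lesssim (K_\phi-1)\log\frac{1}{K_\phi-1}\,\|f\|_{\BMO}$, and this extra logarithm is genuinely there (logarithmic extremizers saturate it). Since $(K_\phi-1)\log\frac{1}{K_\phi-1}\gg\log K_\phi$ as $K_\phi\to 1$, your perturbative step yields at best $1+c\,(K_\phi-1)\log\frac{1}{K_\phi-1}$, strictly weaker than the claimed $1+c\log K_\phi$. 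A further wrinkle: with the exact-mean $L^1$ oscillation you use, swapping the reference constant (from $\av_{\psi(B)}f$ to $\av_B f$, say) costs a multiplicative factor up to $2$, which by itself destroys the leading constant $1$; the paper explicitly remarks that the statement is sensitive to the choice of equivalent norm.

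The paper's proof of the case $K_\phi\in[1,2]$ avoids John--Nirenberg and symmetric differences altogether. It works with the $L^2$-oscillation norm, for which the mean is the optimal constant, so one may compare with \emph{any} constant at no cost: for $B=B(x_0,r)$,
$$ \osc(f\circ\phi,B)\le\Big(\av_B\big|f\circ\phi-\av_{K_\phi\tilde B}f\big|^2\Big)^{1/2}=\Big(\tfrac{1}{|B|}\int_{\phi(B)}\big|f-\av_{K_\phi\tilde B}f\big|^2\,dx\Big)^{1/2}\le K_\phi^{d/2}\,\|f\|_{\BMO}, $$
using measure preservation and the inclusion $\phi(B)\subset K_\phi\tilde B$ to enlarge the domain of integration, and then $K_\phi^{d/2}\le 1+c\log K_\phi$ on $[1,2]$. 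The only price paid is the volume ratio, which is $1+O(K_\phi-1)$; no tail estimate is needed, and that is precisely how the leading constant $1$ survives.
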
 

\begin{remark} Let us first point out that this property of $\BMO$ space, is not invariant by changing with an equivalent norm. So the precise statement should be : there exists a norm such that Theorem \ref{thm:compo-bis} holds for $\BMO$ equipped with it.
\end{remark}

\begin{remark} In \cite{BK}, such result was already obtained with a control by  $c_1\left[1+c \log(K_\phi) \right]$ with an implicit constant $c_1>1$. The aim here is to improve by proving that $c_1$ may be chosen equal to $1$, which brings an important improvement for when the map $\phi$ converges to the identity or any isometry (which is equivalent to $K_\phi$ converges to $1$). This improvement will be very important for our purpose in the next subsections, as we will see.
\end{remark}

\begin{proof}
For more convenient, we will consider the norm of $\BMO_2$ based on $L^2$-oscillation.
If $K_\phi \geq 2$ then the desired result was already obtained in \cite{BK} since then
$$ 1+c \log(K_\phi) \simeq \log(K_\phi).$$
So let us focus on the more interesting case, when $K_\phi\in[1,2]$.
Consider such a function $f\in \BMO$ and map $\phi$. Fix a ball $B=B(x_0,r)$ and look for an estimate of the oscillation
$$ \osc(f\circ \phi ,B):= \left(\av_{B}\left| f\circ \phi(x) - \av_B f \circ \phi \right|^2 dx\right)^{\frac{1}{2}}.$$
Then, it is well-known that
$$ \osc(f\circ \phi ,B)= \inf_{C\in \R} \left(\av_{B}\left| f\circ \phi(x) - C \right|^2 dx\right)^{\frac{1}{2}}$$
and so in particular
$$ \osc(f\circ \phi ,B)\leq \left(\av_{B}\left| f\circ \phi(x) - \av_{K_\phi \tilde{B}} f \right|^2 dx\right)^{\frac{1}{2}},$$
where $\tilde{B}:=B(\phi(x_0),r)$ and $K_\phi \tilde{B}$ the dilated ball.
Using the measure preserving property and the fact that $\phi(B) \subset K_\phi \tilde{B}$, it comes
\begin{align*}
\osc(f\circ \phi ,B) & \leq \left(\av_{\phi(B)}\left| f - \av_{K_\phi \tilde{B}} f \right|^2 dx\right)^{\frac{1}{2}} \\
 & \leq (K_\phi)^{d/2} \left(\av_{K_\phi\tilde{B}}\left| f - \av_{K_\phi \tilde{B}} f \right|^2 dx\right)^{\frac{1}{2}} \\
 & \leq K_\phi^{d/2} \|f\|_{\BMO}.
\end{align*}
Since $K_\phi\in[1,2]$, we have
$$ K_\phi^{d/2} = (1+K_\phi-1)^{d/2} \leq 1+c_1(K_\phi-1) \leq 1+c_2\log(K_\phi),$$
for some numerical constants $c_1,c_2$ only depending on the dimension $d$. We conclude to the desired estimate: uniformly with respect to the ball $B$
$$ \osc(f\circ \phi ,B) \leq \left[1+c\log(K_\phi)\right] \|f\|_{\BMO}.$$
\end{proof}

We can also produce a similar reasoning for the $\LMO$ spaces:

\begin{theorem} 
\label{thm:compo2} 
In $\R^d$ with $\alpha>1$, $p\in(1,2]$, there exists a constant $c:=c(d,\alpha,p)$ such that for every function $f\in \LMO \cap L^p$ and every measure-preserving bi-Lipschitz homeomorphism $\phi$, we have
$$ \| f\circ \phi\|_{\LMO \cap L^p} \leq \|f\|_{\LMO \cap L^p} \left[1+c \log(K_\phi)\right],$$
where
$$ K(\phi)=K_\phi := \sup_{x\neq y} \, \max\left( \frac{|\phi(x)-\phi(y)|}{|x-y|},\frac{|x-y|}{|\phi(x)-\phi(y)|} \right) \geq 1.$$ 
Moreover, $\alpha \to c(d,\alpha,p)$ can be chosen increasing on $\R^{+}$.
\end{theorem}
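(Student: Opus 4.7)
The plan is to mirror the structure of the proof of Theorem \ref{thm:compo-bis}, splitting on the size of $K_\phi$. Since $\phi$ is measure-preserving, the $L^p$-part is automatic: $\|f\circ\phi\|_{L^p}=\|f\|_{L^p}$, so only the (inhomogeneous) $\LMO$-seminorm requires work. When $K_\phi\geq 2$ one has $1+c\log(K_\phi)\simeq \log(K_\phi)$, so the result from \cite{BK} already yields the estimate. Thus the real content is to obtain the sharp constant near the identity, i.e.\ when $K_\phi\in[1,2]$.

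Fix such a $\phi$ and a ball $B=B(x_0,r)$ with $r\leq 1/2$. Arguing as in the proof of Theorem \ref{thm:compo-bis}, with $\tilde B:=B(\phi(x_0),r)$, we obtain
\[
\left(\av_{B}\left|f\circ\phi-\av_{K_\phi\tilde B}f\right|^{2}\right)^{1/2}\leq K_\phi^{d/2}\left(\av_{K_\phi\tilde B}\left|f-\av_{K_\phi\tilde B}f\right|^{2}\right)^{1/2}.
\]
The crux is to multiply by the weight $|\ln r|^{\alpha}$ and compare to the $\LMO$-seminorm, which controls the oscillation of $f$ on $K_\phi\tilde B$ with weight $|\ln(K_\phi r)|^{-\alpha}$. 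I will split on $r$:

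\textbf{(i) Small balls: $K_\phi r\leq 1/2$.} Here $|\ln(K_\phi r)|=|\ln r|-\ln K_\phi\geq \ln 2$, hence
\[
\frac{|\ln r|^{\alpha}}{|\ln(K_\phi r)|^{\alpha}}=\left(1+\frac{\ln K_\phi}{|\ln r|-\ln K_\phi}\right)^{\alpha}\leq \left(1+\frac{\ln K_\phi}{\ln 2}\right)^{\alpha}\leq 1+c_{\alpha}\log(K_\phi),
\]
the last inequality using the elementary bound $(1+x)^{\alpha}\leq 1+(2^{\alpha}-1)x/\ln 2\cdot\ln 2$ for $x\in[0,1]$ (taking $c_\alpha$ non-decreasing in $\alpha$). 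Combined with $K_\phi^{d/2}\leq 1+c_{d}\log(K_\phi)$ on $[1,2]$, this gives the desired estimate for such $r$.

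\textbf{(ii) Transition range: $1/(2K_\phi)<r\leq 1/2$.} Since $K_\phi\leq 2$, we have $r>1/4$ so $|\ln r|^{\alpha}\leq (\ln 4)^{\alpha}$, a bounded quantity. The oscillation of $f\circ\phi$ on $B$ is bounded by the $L^p$ norm via H\"older (on a ball of size $\lesssim 1$) and hence by $\|f\|_{L^p}$ using the measure-preserving property.

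\textbf{(iii) Unit-ball control.} For $|B|=1$, $\phi(B)$ is contained in a ball of radius comparable to $1$, so $\left(\int_B|f\circ\phi|^2\right)^{1/2}$ is bounded by a finite union of unit-ball $L^2$-norms of $f$, contributing $(1+c\log K_\phi)\|f\|_{\LMO\cap L^p}$ by the same dilation-volume argument as in case (i).

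Collecting (i)--(iii) yields the required bound $\|f\circ\phi\|_{\LMO\cap L^p}\leq(1+c\log K_\phi)\|f\|_{\LMO\cap L^p}$ for $K_\phi\in[1,2]$. For monotonicity in $\alpha$, redefine $c(d,\alpha,p):=\sup_{\alpha'\leq\alpha}c_{0}(d,\alpha',p)$ where $c_{0}$ is the constant extracted above; this is finite since $\alpha\mapsto c_\alpha$ grows like $2^\alpha$ on bounded intervals. The main difficulty is item (i): the constant in front of $\log K_\phi$ must not blow up when $r$ is comparable to $K_\phi^{-1}$, which is precisely why the argument requires the explicit subtraction $|\ln(K_\phi r)|=|\ln r|-\ln K_\phi$ and uses the hypothesis $\alpha>1$ only implicitly (through the well-definedness of the weight), while the dependence of $c_\alpha$ on $\alpha$ forces the monotonic redefinition.
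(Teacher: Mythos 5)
Your reduction to $K_\phi\in[1,2]$ and your case (i) (the range $K_\phi r\le \tfrac12$) coincide with the paper's argument, but your step (ii), the transition range $\tfrac12 < K_\phi r$ with $r\le\tfrac12$, has a genuine gap --- and this is exactly the step where the content of the theorem lies. First, the bound you invoke goes the wrong way: with $L^2$-oscillations (which is what you set up), H\"older controls an $L^p$ average by an $L^2$ average on a set of measure $\lesssim 1$, not an $L^2$ oscillation by $\|f\|_{L^p}$ with $p\le 2$; to pass from $L^2$ to $L^p$ you need the $L^\infty$ information coming from $\alpha>1$. Second, and more seriously, even a corrected crude bound of the form $\osc(f\circ\phi,B)\le C\,\|f\|_{\LMO\cap L^p}$ cannot close the argument: the constant $C$ it produces (from H\"older, the weight $(\ln 4)^\alpha$, and in (iii) from covering by unit balls) does not tend to $1$ as $K_\phi\to 1$, whereas the right-hand side $\bigl[1+c\log K_\phi\bigr]\|f\|$ does. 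Summing your cases therefore only gives $\|f\circ\phi\|\le C_1\bigl[1+c\log K_\phi\bigr]\|f\|$ with some $C_1>1$, which is precisely the earlier estimate of \cite{BK} that the theorem is meant to sharpen; indeed, already for $\phi=\mathrm{id}$ your case (ii) bound does not recover the trivial inequality $|\ln r|^\alpha\,\osc(f,B)\le\|f\|$.

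The paper treats this range by a different comparison: instead of dilating to $K_\phi\tilde B$ (whose radius exceeds $\tfrac12$ and is thus not seen by the $\LMO$ seminorm), it compares the average over $\phi(B)$ with the average over $\tilde B=B(\phi(x_0),r)$ of the \emph{same} radius $r\le\tfrac12$. Since $K_\phi^{-1}\tilde B\subset\phi(B)\subset K_\phi\tilde B$, the symmetric difference has measure $\lesssim (K_\phi-1)r^d$, and because $\alpha>1$ gives the embedding $\LMO\hookrightarrow L^\infty$ (Lemma \ref{lemma:injection}), swapping $\phi(B)$ for $\tilde B$ costs only $C(K_\phi-1)\|f\|_{\LMO}\lesssim \log(K_\phi)\,\|f\|_{\LMO}$, an error that vanishes as $K_\phi\to1$, while the main term $\osc(f,\tilde B)\le|\ln r|^{-\alpha}\|f\|_{\overline\LMO}$ keeps the leading constant equal to $1$. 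This also shows that your closing remark that $\alpha>1$ enters ``only implicitly'' is inaccurate: it is used essentially, through the $L^\infty$ bound, exactly in this regime. Finally, the paper avoids your unit-ball covering in (iii) by working with the equivalent norm $\|f\|_{\overline\LMO}+\|f\|_{L^p}$ (with $L^1$-oscillations), whose $L^p$ part is exactly preserved by $\phi$ --- consistent with the paper's remark that the sharp statement is tied to a particular choice of norm.
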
 

The importance of the result is the behavior for $\phi$ almost an isometry, which means $K_\phi$ almost equal to $1$.


\begin{proof} Since the case of the logarithmic growth for $K_\phi\geq 2$ was already studied in \cite{BK}, we only focus on the case $K_\phi\in[1,2]$.
We first describe the norm we will consider on $\LMO\cap L^p$ : 
$$ \|f\|_{\LMO \cap L^p} = \|f\|_{\overline\LMO} + \|f\|_{L^p},$$
where the $\overline\LMO$-part is the homogeneous part, obtained by considering $L^1$-oscillations and more precisely:
$$
\|f\|_{\overline\LMO}:=\sup_{0 <r\le \frac{1}{2}} |\ln {r}|^\alpha \inf_{c} \left(\av_{B} \left|f-c \right|\right).
$$
We know that this norm is equivalent to the above defined norm for $\LMO\cap L^p$. So let us work with this norm and write
$$ \osc(f,B):=\inf_{c} \left(\av_{B} \left|f-c \right|\right).$$

First $\Phi$ preserves the measure so $\|f \circ \phi\|_{L^p} = \|f\|_{L^p}$.
Let us consider the same notations as in the previous proof.
So we fix a ball $B=B(x_0,r)$ of radius $r\leq\frac{1}{2}$ and a constant $c$.
If $K_\phi r\leq \frac{1}{2}$ then we just repeat the previous reasoning and we get
\begin{align*}
\osc(f\circ \phi ,B) & \leq (K_\phi)^{d} \osc(f, K_\phi \tilde{B}) \\
& \leq (K_\phi)^{d} |\log(K_\phi r)|^{-\alpha} \|f\|_{\overline\LMO} \leq (K_\phi)^{d} (1+ \log(K_\phi))^\alpha |\log(r)|^{-\alpha} \|f\|_{\overline\LMO},
\end{align*}
 where we used that 
$$ \frac{|\log(r)|}{|\log(r)|-\log(K_\phi)} = 1+ \frac{\log(K_\phi)}{|\log(r)|-\log(K_\phi)} \leq 1+ \frac{\log(K_\phi)}{\log(2)}.$$
Since $K_\phi\in[1,2]$, we have (since $\alpha \geq 1$)
$$ (K_\phi)^{d} (1+ \log(K_\phi))^\alpha \leq (1+ c\log(K_\phi))$$
for some (large enough) numerical constant $c\gg d+\alpha$. We then conclude to 
\begin{align*}
\osc(f\circ \phi ,B) \leq (1+ c\log(K_\phi)) |\log(r)|^{-\alpha} \|f\|_{\overline\LMO}.
\end{align*}

If $K_\phi r\geq \frac{1}{2}$ (which means that $r\geq \frac{1}{4}$) then we know that 
$$ \left| \av_{\phi(B)} |f| - \av_{\tilde{B}} |f| \right|\lesssim (K_\phi-1) \|f\|_{L^\infty}$$
since $\phi(B) \subset K_\phi \tilde{B}$ and $K_\phi^{-1} \tilde{B} \subset \phi(B)$ so that 
$$|\phi(B) \setminus \tilde{B}|+|\tilde{B}\setminus \phi(B)| \lesssim (K_\phi-1) r^d.$$
Due to Lemma \ref{lemma:injection}, we deduce that 
$$ \left| \av_{\phi(B)} |f| - \av_{\tilde{B}} |f| \right|\lesssim (K_\phi-1) \|f\|_{\LMO}.$$
Consequently, it comes
\begin{align*}
\osc(f\circ \phi ,B) & = \inf_{c} \left(\av_{\phi(B)}\left| f - c \right| dx\right) \\
 & \leq \osc(f,\tilde{B}) + C (K_\phi-1) \|f\|_{\LMO} \\
& \leq |\log(r)|^{-\alpha} \|f\|_{\overline\LMO} + C (K_\phi-1) \|f\|_{\LMO},
\end{align*}
where $C$ denotes here a universal constant and may vary from line to another line. Since $r\geq \frac{1}{4}$
we get
\begin{align*}
\osc(f\circ \phi ,B) \leq |\log(r)|^{-\alpha} \left(\|f\|_{\overline\LMO} + C (K_\phi-1) \|f\|_{\LMO}\right).
\end{align*}

Finally, we also obtain that
$$ \|f \circ \phi\|_{L^p \cap \LMO} \leq (1+ C (K_\phi-1)+ c\log(K_\phi)) \|f\|_{L^p \cap \LMO},$$
and we conclude since for $K_\phi\in[1,2]$, $(K_\phi-1) \lesssim \log(K_\phi)$.
\end{proof}

\subsection{Uniform estimates for discretized solution of 2D Navier-Stokes equation} \label{subsec:uniform}

In this paragraph the small parameter $\varepsilon$ in Navier-Stokes equation (\ref{NS}) is fixed. For simplicity we drop the index $\varepsilon$.
We aim to discretize this equation, using the so-called Trotter's formula to combine the two phenomenons : the transport part and the diffusion part.

Let $T>0$  to be chosen later. For every $n\in  \mathbb N^*$ one denote
$$T_{i}^n=i\frac{T}{n}, \qquad i=0,...,n.
$$ 
  We consider the following scheme: for every $n\in  \mathbb N^*$ one constructs $u^n$ as follows : 
\begin{itemize} 
  \item $u^n$ belongs to $C([0,T],L^2)$  with the initial condition
     $$u^n(0)=u_0,$$
\item If $t\in[T_{i}^n,T_{i+1}^n]$ with $i\in2{\mathbb N}$ and $i<n$
 \begin{equation}
 \label{uh-2}
  \left\{ 
 \begin{array}{ll} 
 \partial_t u^n -2\varepsilon \Delta u^n =0,\qquad x\in \mathbb R^2, t>0, \\
 \nabla.u^n=0,
\end{array} \right.    
      \end{equation}
\item if $t\in[T_{i}^n,T_{i+1}^n]$  with $i\in2{\mathbb N}+1$ and $i<n$   
 \begin{equation}
 \label{uh-1}
  \left\{ 
 \begin{array}{ll} 
 \partial_t u^n + 2 u^n \cdot\nabla u^n + 2 \nabla P^n=0,\qquad x\in \mathbb R^2, t>0, \\
  \nabla.u^n=0,
 \end{array} \right.    
      \end{equation}
  \end{itemize}    
      Let us note that $u^n$ exists and it is smooth. In fact, the first step ($i=0$) regularizes the solution and so that $u^n(T_1^n)\in H^\infty(\mathbb R^2)$.
      By the classical result of Kato, the Euler system \eqref{uh-2} has a unique solution on $[T_{1}^n, T_2^n]$ which belongs to  $H^\infty$. And then we iterate the same argument to get a (unique) piecewisely smooth solution $u^n$ on $(0,T]$.

\medskip Let us now give a more convenient form of the different systems (\ref{uh-2}) and (\ref{uh-1}), in terms of the vorticity $\omega^n:= \textrm{curl}(u^n)$.

The system (\ref{uh-2}) can be exactly solved by the heat semigroup (since it preserves the vanishing divergence and commutes with the $\textrm{curl}$-operator), we may rewrite (\ref{uh-2}) as following : for  $t\in[T_{i}^n,T_{i+1}^n]$ with $i\in2{\mathbb N}$ and $i<n$
\begin{equation} 
\omega^n(t)=e^{2\varepsilon(t-T_{i}^n)\Delta}\omega^n(T_{i}^n). \label{wh-2}
 \end{equation}

The system (\ref{uh-1}) may also be written on the vorticity as follows: for $t\in[T_{i}^n,T_{i+1}^n]$ with $i\in2{\mathbb N}+1$ and $i<n$
 \begin{equation}
 \label{wh-1}
  \left\{ 
 \begin{array}{ll} 
 \partial_t \omega^n + 2 u^n \cdot\nabla \omega^n =0,\qquad x\in \mathbb R^2, t>0, 
 \end{array} \right.    
      \end{equation}
      supplemented with the Biot-Savart law:
\begin{equation*}
\label{bs}
u^n=K\ast\omega^n,\quad \hbox{with}\quad K(x)=\frac{x^\perp}{2\pi|x|^2}.
\end{equation*}
As a consquence, we know that $\omega^n(t)=\omega^n(T_i^n) \circ \phi_{t-T_i^n}^{-1}$ where $\phi_{t-T_i^n}$ is the flow corresponding to the vector-field $u^n$.

\medskip

Then, our aim is now to prove that the family $(u^n)_n$ above is uniformly bounded on the interval $[0,T]$, as soon as $T$ is small enough (depending of the initial vorticity). More precisely, one proves the following with the notation
${\mathcal B_{p,\alpha}}:=\LMO \cap L^p$ :

\begin{proposition} \label{prop:uh} Let $p\in[1,2)$ and $\alpha>1$, $\omega_0\in {\mathcal B_{p,\alpha}}$. There exists $T\approx\frac{1}{\|\omega_0\|_{\mathcal B_{p,\alpha}}}$ such that the family   \mbox{$(\omega^n)_{n\in\mathbb N^*}$} is uniformly bounded in $\mathcal B_{p,\alpha}$. More precisely,
$$ 
 \|u^n\|_{L^\infty([0,T],L^2)}+ \|u^n\|_{L^\infty([0,T],\Lip)} + \|\omega^n\|_{L^\infty([0,T],\mathcal B_{p,\alpha})} \leq 2 \| \omega_0\|_{\mathcal B_{p,\alpha}}.
$$

\end{proposition}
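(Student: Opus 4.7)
The plan is to set up a bootstrap argument on each subinterval of the Trotter splitting, exploiting two key ingredients: the sharp composition estimate of Theorem \ref{thm:compo2} (with implicit constant equal to $1$ in front of $\|f\|_{\mathcal B_{p,\alpha}}$) and the contractivity of the heat semigroup on $\mathcal B_{p,\alpha}$ from Remark \ref{remark}(e). I will work with the ansatz
$$ \sup_{t\in[0,T]}\|\omega^n(t)\|_{\mathcal B_{p,\alpha}}\leq 2\|\omega_0\|_{\mathcal B_{p,\alpha}}, $$
under which Proposition \ref{prop:velocity} (applicable since $\alpha>1$; see also Remark \ref{remark:ulip}) yields the uniform bound $\|u^n(t)\|_{\Lip}\leq C_\alpha \|\omega_0\|_{\mathcal B_{p,\alpha}}$, with $C_\alpha=2\rho\alpha/(\alpha-1)$.

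First I would analyze each type of step of length $h=T/n$ separately. On a diffusion interval $[T_i^n,T_{i+1}^n]$ with $i$ even, the explicit representation $\omega^n(t)=e^{2\varepsilon(t-T_i^n)\Delta}\omega^n(T_i^n)$ combined with Remark \ref{remark}(e) gives $\|\omega^n(t)\|_{\mathcal B_{p,\alpha}}\leq \|\omega^n(T_i^n)\|_{\mathcal B_{p,\alpha}}$ and $\|\omega^n(t)\|_{L^p}\leq \|\omega^n(T_i^n)\|_{L^p}$, while the $L^2$-energy of $u^n$ is dissipated. On an Euler interval ($i$ odd), the transport formula gives $\omega^n(t)=\omega^n(T_i^n)\circ \phi_{t-T_i^n}^{-1}$ with $\phi_\tau$ the measure-preserving flow of the divergence-free Lipschitz field $2u^n$; Proposition \ref{proposition:flowlip} together with the bootstrap bound on $\|u^n\|_{\Lip}$ yields $K_{\phi_\tau}\leq \exp(2\tau C_\alpha\|\omega_0\|_{\mathcal B_{p,\alpha}})$, so Theorem \ref{thm:compo2} implies
$$ \|\omega^n(t)\|_{\mathcal B_{p,\alpha}}\leq \bigl[1+c\log K_{\phi_{t-T_i^n}}\bigr]\,\|\omega^n(T_i^n)\|_{\mathcal B_{p,\alpha}}\leq (1+Ah)\,\|\omega^n(T_i^n)\|_{\mathcal B_{p,\alpha}}, $$
with $A:=2c\,C_\alpha\,\|\omega_0\|_{\mathcal B_{p,\alpha}}$; simultaneously the $L^p$-norm of $\omega^n$ and the $L^2$-norm of $u^n$ are preserved by this measure-preserving transport.

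Combining these two estimates over the at most $n/2$ Euler steps and recalling that diffusion steps contribute factor $1$, one obtains
$$ \|\omega^n(t)\|_{\mathcal B_{p,\alpha}}\leq (1+Ah)^{n/2}\,\|\omega_0\|_{\mathcal B_{p,\alpha}}\leq e^{AT/2}\,\|\omega_0\|_{\mathcal B_{p,\alpha}},\qquad \forall\, t\in[0,T], $$
uniformly in $n$. Choosing $T$ so that $e^{AT/2}\leq 3/2$, i.e. $T\simeq \|\omega_0\|_{\mathcal B_{p,\alpha}}^{-1}$, gives a strict improvement of the ansatz. Since $\omega^n$ is piecewise smooth by construction, the map $t\mapsto\|\omega^n(t)\|_{\mathcal B_{p,\alpha}}$ is continuous, and a standard open-closed continuity argument then closes the bootstrap on the whole of $[0,T]$. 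The $\Lip$-control of $u^n$ follows from Proposition \ref{prop:velocity}, and the $L^2$-bound from the fact that $\|u^n(t)\|_{L^2}$ is monotone non-increasing (it decreases on heat steps and is conserved on Euler steps).

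The main obstacle, and indeed the whole motivation for the preceding subsection on composition, is the sharpness of the constant in Theorem \ref{thm:compo2}: an implicit constant $c_1>1$ in front of $\|f\|_{\mathcal B_{p,\alpha}}$ would accumulate as $c_1^{n/2}\to\infty$ and immediately destroy the uniform-in-$n$ bound, regardless of how small $T$ is taken. A secondary technical point is to justify that $t\mapsto\|\omega^n(t)\|_{\mathcal B_{p,\alpha}}$ is truly continuous so that the bootstrap closure is rigorous; this is ensured by the piecewise smoothness of $u^n$ recorded in the construction of the scheme.
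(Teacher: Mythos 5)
Your proof is correct and follows essentially the same route as the paper: heat steps are contractions on $\mathcal B_{p,\alpha}$ by Remark \ref{remark}(e), Euler steps contribute a factor $1+c\log K_\phi\leq e^{\mu X h}$ via Propositions \ref{prop:velocity}, \ref{proposition:flowlip} and the sharp composition Theorem \ref{thm:compo2}, and $T\approx \|\omega_0\|_{\mathcal B_{p,\alpha}}^{-1}$ is chosen so the accumulated factor stays below $2$. The only (cosmetic) difference is that the paper closes the argument by a discrete induction on the step index $k$ with $X_k=\sup_{[0,kh]}\|\omega^n\|_{\mathcal B_{p,\alpha}}$, whereas you run a continuous-in-time bootstrap; your identification of the sharp constant $c_1=1$ as the indispensable ingredient matches the paper's stated motivation.
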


\begin{proof}
 For $h\ll T^{-1}$, consider the discrete solution $ \omega^n$ given by (\ref{wh-2}) and (\ref{wh-1}). We write $X_0:=\|\omega(0)\|_{\mathcal B_{p,\alpha}}$ and for $k\in\{1,...,n\}$
 $$ X_k=\sup_{[0,kh]}\|\omega^n(t)\|_{\mathcal B_{p,\alpha}}.
 $$
If $k\in 2{\mathbb N}$ and $k<n-1$ then $\omega^n$ on $[kh,(k+1)h]$ is given
 by (\ref{wh-2}) and so by Remark \ref{remark}
$$ 
X_{k+1} \leq X_k.
$$
If $k\in 2{\mathbb N}+1$ and $k<n-1$ then $\omega^n$ on $[{T}^{n}_k,{T}^{n}_{k+1}]$ is
 given by (\ref{wh-1}) and so from Proposition \ref{prop:velocity}, Proposition \ref{proposition:flowlip} and  Theorem \ref{thm:compo2}, we have 
$$ X_{k+1} \leq X_k \exp(\mu X_{k}h),
$$
for some numerical constant $\mu$ (here we have used that $1+x\leq \exp(x)$ for $x\geq 0$).

As a consequence, the sequence $(X_k)_k$ satisfies the following growth condition: for every $k\in\{1,...,n\}$ 
\begin{equation} 
 \label{eq:xq}
X_k\leq X_{k-1} \exp(\mu{X_{k-1}h})
 \end{equation}
 where $\mu$ is a universal constant.
 
By iteration, we deduce that
\begin{equation} 
X_k\leq X_{0} \exp({\mu(X_0+...X_{k-1})h}).
 \label{eq:xq2} \end{equation}
Let us assume that $X_j\leq 2X_0$ for every $j< k$ then by (\ref{eq:xq2}) we deduce
$$ X_{k} \leq X_0 \exp(\mu T 2X_0)
$$
One chooses $T$ such that
$$
\exp(\mu T 2X_0)=2,
$$
to conclude  
\begin{equation}
X_k \leq 2X_0. \label{eq:co}
\end{equation}
By iterating this reasoning, it comes that (\ref{eq:co}) holds for every $k\leq n$ which combined with Propositions \ref{prop:velocity} and \ref{proposition:flowlip} gives the desired estimate.
\end{proof}

\subsection{Convergence to a solution of Navier-Stokes equation} \label{subsec:conv}

According to Proposition \ref{prop:uh}, there exists a subsequence $(u^n)_{n}$ $*$-weakling converging to $u \in L^\infty([0,T],\Lip)$ and such that $\omega^n$ $*$-weakly converges to $\omega\in L^\infty([0,T],\mathcal B_{p,\alpha})$.

\begin{proposition} The limit $(u,\omega)$ is a solution of 2D Navier-stokes equation
$$ \partial_t \omega + u\cdot \nabla \omega - \varepsilon \Delta \omega = 0$$
and satisfies uniform estimates with respect to $\varepsilon>0$:
$$ \|u\|_{L^\infty([0,T],\Lip)} + \|\omega\|_{L^\infty([0,T],\mathcal B_{p,\alpha})} \leq 2 \| \omega_0\|_{\mathcal B_{p,\alpha}},$$
where $T=T(\omega_0)$ is given in Proposition \ref{prop:uh}.
\end{proposition}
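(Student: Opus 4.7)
The plan is to combine three ingredients: uniform estimates provided by Proposition \ref{prop:uh}, a strong compactness argument that promotes the weak-$*$ convergence to strong convergence in a space where the nonlinearity can be treated, and a careful Trotter-type averaging of the piecewise equations for $\omega^n$.

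First, I would transfer the uniform bounds to the limit. By Proposition \ref{prop:uh} the sequence $(u^n, \omega^n)$ is bounded in $L^\infty([0,T], \Lip) \times L^\infty([0,T], \mathcal{B}_{p,\alpha})$ by $2\|\omega_0\|_{\mathcal{B}_{p,\alpha}}$. Since these spaces are dual to separable ones (one uses that $\Lip = W^{1,\infty}$ and that $\mathcal{B}_{p,\alpha}$ is a dual of a reasonable separable predual), weak-$*$ lower semicontinuity immediately yields the same estimate for $(u,\omega)$.

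Second, I would establish strong compactness of $u^n$. On each interval $[T_i^n, T_{i+1}^n]$, the equations (\ref{uh-2}) and (\ref{uh-1}) together with the uniform bounds show that $\partial_t u^n$ is bounded in $L^\infty_t H^{-2}_{\mathrm{loc}}$ (the heat term gives $2\varepsilon \Delta u^n$, and the advection term equals $-2 \mathbb{P}(u^n \cdot \nabla u^n)$, both controlled by the Lipschitz and $L^2$ bounds). Combined with the spatial Lipschitz bound, Ascoli-Arzelà (or Aubin-Lions) yields a subsequence converging strongly in $C([0,T], L^2_{\mathrm{loc}})$, which is sufficient to pass to the limit in $u^n \otimes u^n$.

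Third, I would pass to the limit in the weak vorticity formulation. For a test function $\varphi \in C_c^\infty([0,T) \times \R^2)$, integration over $[0,T]$ of the piecewise PDEs (\ref{wh-2})-(\ref{wh-1}) gives
\begin{equation*}
-\int_0^T \!\!\int \omega^n \partial_t\varphi - \int \omega_0 \varphi(0) = 2\varepsilon\!\!\int_0^T\!\!\!\chi^n_{\mathrm{even}}(t) \!\!\int \omega^n \Delta\varphi - 2\!\int_0^T\!\!\!\chi^n_{\mathrm{odd}}(t)\!\!\int u^n\omega^n \cdot\nabla\varphi,
\end{equation*}
where $\chi^n_{\mathrm{even}}, \chi^n_{\mathrm{odd}}$ are the characteristic functions of the even/odd intervals. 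As $n \to \infty$, both $\chi^n_{\mathrm{even}}$ and $\chi^n_{\mathrm{odd}}$ converge weakly-$*$ in $L^\infty$ to the constant $\tfrac12$, and the factor $2$ then recovers the correct coefficients. The key quantitative point is that the right-hand side is a pairing of a weakly-$*$ convergent sequence (the indicator) against a strongly convergent sequence in $L^1_{t,x,\mathrm{loc}}$: the term $\omega^n \Delta\varphi$ converges in $L^1_{\mathrm{loc}}$ because $\omega^n$ is bounded in $L^\infty_t \mathcal{B}_{p,\alpha} \hookrightarrow L^\infty_{t,x}$ (by Lemma \ref{lemma:injection}, since $\alpha>1$) and converges weakly-$*$, and $u^n \omega^n$ converges strongly in $L^1_{\mathrm{loc}}$ by combining strong convergence of $u^n$ with the $L^\infty$ bound on $\omega^n$. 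Passing to the limit yields
\begin{equation*}
-\int_0^T\!\!\int \omega \partial_t \varphi - \int \omega_0 \varphi(0) = \varepsilon \int_0^T\!\!\int \omega \Delta \varphi - \int_0^T\!\!\int u\,\omega \cdot \nabla\varphi,
\end{equation*}
which is exactly the weak form of $\partial_t \omega + u\cdot\nabla \omega - \varepsilon \Delta \omega = 0$. The Biot--Savart relation $u = K \ast \omega$ passes to the limit by continuity of the Riesz transforms on the relevant spaces.

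The main obstacle I anticipate is the justification of the product convergence $u^n\omega^n \to u\omega$ in $L^1_{\mathrm{loc}}$, because $\omega^n$ is only weakly-$*$ convergent while being multiplied against another weakly convergent factor through the indicator $\chi^n_{\mathrm{odd}}$. The clean resolution is the observation above that $u^n$ is \emph{strongly} compact in $C_t L^2_{\mathrm{loc}}$, so one factor provides strong convergence while the other two ($\omega^n$ and $\chi^n_{\mathrm{odd}}$) can remain weakly-$*$ convergent against the smooth compactly supported test function. Once this is in hand, the estimates are inherited directly from Proposition \ref{prop:uh} via lower semicontinuity, and the value of $T$ is the one already provided there.
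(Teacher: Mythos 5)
Your overall scheme is the same as the paper's (weak-$*$ lower semicontinuity for the bounds, strong compactness of $u^n$ in $C([0,T],L^2_{\mathrm{loc}})$ from the $L^\infty_t H^{-2}$ bound on $\partial_t u^n$, and averaging of the even/odd indicators to the constant $\tfrac12$), but the two justifications you give at the crucial limit passage are not valid. First, boundedness of $\omega^n$ in $L^\infty_{t,x}$ together with weak-$*$ convergence does \emph{not} imply that $\omega^n\Delta\varphi$ converges strongly in $L^1_{\mathrm{loc}}$: take $\omega^n(x)=\sin(nx_1)$ times a cutoff, which is bounded and weak-$*$ null but whose $L^1_{\mathrm{loc}}$ norm does not vanish. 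No spatial-derivative bound on $\omega^n$ has been established, so strong convergence of $\omega^n$ is not free. Second, for the nonlinear term the principle ``one strong factor, two weak-$*$ factors'' is false in general: what you need is $\chi^n_{\mathrm{odd}}\,\omega^n\rightharpoonup\tfrac12\,\omega$, i.e.\ the product of two weak-$*$ convergent sequences converging to the product of the limits, and this is precisely the point at issue --- all the more since $\omega^n$ is built by alternating the heat and transport dynamics in sync with $\chi^n_{\mathrm{odd}}$, so a correlation between these two factors is hard-wired into the scheme and must be excluded by an argument, not by a general principle.

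The paper avoids both problems by never pairing the oscillating indicators with $\omega^n$ directly: writing $\omega^n=\textrm{curl}(u^n)$ and integrating by parts in $x$, the diffusion term becomes $\int_0^T\langle u^n,(\textrm{curl}^*\Delta\phi)_h\rangle\,dt$, a pairing of the strongly $L^2_{\mathrm{loc}}$-convergent $u^n$ against the truncated test functions, which converge weakly to $\tfrac12\,\textrm{curl}^*\Delta\phi$ by Lemma \ref{lemma}; for the nonlinear term it decomposes $u^n=\tilde u+(u^n-\tilde u)$ with $\tilde u$ smooth, treats $\langle\omega^n,2\tilde u\cdot\nabla\phi\rangle$ in the same way, and bounds the remainder in norm, uniformly in the indicator, using the strong convergence $u^n\to u$. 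Your route could be repaired, e.g.\ by proving genuine strong compactness of $\omega^n$ (for $\alpha>1$ the $\mathcal B_{p,\alpha}$ bound gives a uniform modulus of continuity of order $(1+|\ln r|)^{1-\alpha}$, which combined with the $H^{-2}_{\mathrm{loc}}$ time-equicontinuity yields compactness in $C([0,T],L^2_{\mathrm{loc}})$), or by checking that $t\mapsto\langle\omega^n(t),g\rangle$ is uniformly Lipschitz for each fixed smooth $g$, so that its pairing with $\chi^n-\tfrac12$ vanishes in the limit; but as written these steps are missing and the stated justifications are incorrect. A minor further point: to identify the limit with the Navier--Stokes solution $u^\varepsilon$ (as used for Theorem \ref{uniform}) the paper invokes uniqueness, which you do not mention, though the statement itself only claims that the limit is \emph{a} solution.
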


This also proves Theorem \ref{uniform} for the solution of Navier-Stokes equations. We let the reader to check that for the case of fractional Navier-Stokes is exactly the same, since for $\sigma\in(0,1)$ the heat kernel of $e^{-t(-\Delta)^\sigma}$ is given by a non-negative $L^1$-normalized function.

\begin{proof}
 The corresponding estimates on $u$ and $\omega$ directly follows from the uniform estimates of Proposition \ref{prop:uh}. So it remains us to check that $(u,\omega)$ is a solution of 2D Navier-Stokes equation.\\
Let $\phi \in C^\infty([0,T[ \times \R^2)$ compactly supported.
For every $h$ small enough, we have
\begin{equation}
 \sum_{i=0}^{n/2} \int_{T^n_{2i}}^{T^n_{2i+1}} \langle \partial_t \omega^n - 2 \varepsilon \Delta \omega^n , \phi \rangle    ds + \int_{T^n_{2i+1}}^{T^n_{2i+2}}       \langle \partial_t \omega^n + 2 u^n \cdot \nabla \omega^n  , \phi \rangle ds =0 ,
 \label{eq:sum1} 
 \end{equation}
since each term is equal to $0$.  

Using the initial condition on the interval $[  T^n_{2i},T^n_{2i+1}]$ and the vanishing divergence of $u^n$, it comes
$$
 \int_{T^n_{2i+1}}^{T^n_{2i+2}}       \langle \partial_t \omega^n + 2 u^n \cdot \nabla \omega^n  , \phi \rangle ds=
 -\int_{T^n_{2i+1}}^{T^n_{2i+2}} \langle  \omega^n ,  \partial_t \phi + 2 u^n \cdot \nabla \phi \rangle ds + [\langle \omega^n, \phi\rangle ]_{T^n_{2i+1}}^{T^n_{2i+2}} $$
 and
$$
 \int_{T^n_{2i}}^{T^n_{2i+1}} \langle \partial_t \omega^n -2 \varepsilon \Delta \omega^n , \phi \rangle    ds = -\int_{T^n_{2i}}^{T^n_{2i+1}}  \langle  \omega^n ,  \partial_t \phi +2 \varepsilon \Delta \phi \rangle ds + [\langle \omega^n, \phi\rangle ]_{T^n_{2i}}^{T^n_{2i+1}}. $$

So by summing over $i$, (\ref{eq:sum1}) becomes
\begin{equation} \langle \omega_0,\phi(0)\rangle + \int_0^T \langle \omega^n , \partial_t \phi\rangle ds - \sum_{i=0}^{n/2}          
\int_{T^n_{2i+1}}^{T^n_{2i+2}}    \langle  \omega^n , 2 u^n \cdot \nabla \phi \rangle ds +2\varepsilon \int_{T^n_{2i}}^{T^n_{2i+1}}  \langle  \omega^n , \Delta \phi \rangle ds. \label{eq:sum2} \end{equation}

The  family $\partial_t u_n$ is bounded in $L^\infty([0,T], H^{-2})$. Using Ascoli-Arzela and Rellich theorems we get that (up to extract a subsequence) we may assume that the convergence of $u^n$ to $u$ is strong in $L^2([0,T]\times K)$ for every compact $K\subset \mathbb R^2$.

Due to the weak convergence of $\omega_n$, we get
$$ \int_0^T \langle \omega^n , \partial_t \phi\rangle ds \xrightarrow[h\to 0]{}  \int_0^T \langle \omega , \partial_t \phi\rangle ds.$$
We have (using the notation of Lemma \ref{lemma})
$$ \sum_{i=0}^{n/2} \int_{(2i+1)h}^{2(i+1)h} \langle  \omega^n , \Delta \phi \rangle ds = \int_0^T \langle \omega^n(t), (\Delta \phi(t))_h\rangle \, dt.$$
Since $\omega^n=\textrm{curl}(u^n)$, by intgration by parts in the physical space we have
$$ \sum_{i=0}^{n/2} \int_{(2i+1)h}^{2(i+1)h} \langle  \omega^n , \Delta \phi \rangle ds = \int_0^T \langle u^n(t),(\textrm{curl}^* \Delta \phi(t))_h\rangle \, dt.$$
Then using that $(u_n)_n$ strongly converges into $L^2([0,T]\times K)$ (where $K$ is a compact including the space-support of $\phi$) and according to Lemma \ref{lemma} $(\textrm{curl}^* \Delta \phi(t))_h$ weakly converges in $L^2$ then we conclude that
\begin{align*} 
\lim_{h\to 0} \sum_{i=0}^{n/2} \int_{(2i+1)h}^{2(i+1)h} \langle  \omega^n , \Delta \phi \rangle ds & = \frac{1}{2}\int_0^T \langle u(t),\textrm{curl}^* \Delta \phi(t)\rangle \, dt \\
& = \frac{1}{2}\int_0^T \langle \omega(t), \Delta \phi(t)\rangle \, dt.
\end{align*}
For the third term, we decompose $u^n=\tilde u + (u^n-\tilde u)$ with a smooth function $\tilde u$ so that
$$ \langle  \omega^n , 2 u^n \cdot \nabla \phi \rangle = \langle  \omega^n , 2 \tilde u \cdot \nabla \phi \rangle + \langle  \omega^n , 2 (u^n-\tilde u) \cdot \nabla \phi \rangle.$$
As previously, using Lemma \ref{lemma}, we have
$$\sum_{i=0}^{n/2}          \int_{2ih}^{(2i+1)h} \langle  \omega^n , 2 \tilde u \cdot \nabla \phi \rangle ds \xrightarrow[h\to 0]{}   \int_0^T \langle  \omega ,  \tilde u \cdot \nabla \phi \rangle ds.$$
Moreover, 
\begin{align*} 
\left| \sum_{i=0}^{n/2}          \int_{2ih}^{(2i+1)h} \langle  \omega^n, 2 (u^n-\tilde u) \cdot \nabla \phi \rangle ds \right| & \lesssim  \sum_{i=0}^{n/2}          \int_{2ih}^{(2i+1)h} \|\omega^n\|_{L^2} \|u^n- \tilde u\|_{L^2} \| \nabla \phi\|_{L^\infty} ds \\
 & \lesssim T \|\omega^n\|_{L^2([0,T],L^2)} \|\tilde u-u^n\|_{L^2([0,T],L^2({\rm Supp}(\phi))} \\
 & \lesssim  \|\tilde u-u^n\|_{L^2([0,T],L^2({\rm Supp}(\phi))}.
\end{align*}
 So finally, using the local strong convergence in $L^2([0,T],L^2)$ of $(u^n)_n$ we have
 $$ \limsup_{h\to 0} \left| \sum_{i=0}^{n/2}          \int_{2ih}^{(2i+1)h} \langle  \omega^n , 2 u^n \cdot \nabla \phi \rangle \,ds - \int_0^T \langle  \omega , 2 u \cdot \nabla \phi \rangle \right| \lesssim \inf_{\tilde u \in C^\infty_0} \|\tilde u - u \|_{L^2([0,T],L^2)} = 0.$$
 
So taking the limit when $h\to 0$ in (\ref{eq:sum2}) yields
\begin{equation} \langle \omega_0,\phi(0)\rangle + \int_0^T \langle \omega , \partial_t \phi\rangle ds - \int_0^T \langle  \omega ,  u \cdot \nabla \phi \rangle ds -\varepsilon \int_{0}^{T} \langle  \omega , \Delta \phi \rangle ds, \end{equation}
which by integrations by parts in time gives (in a distributional sense)
\begin{equation} \int_0^T \langle \partial_t \omega + u \cdot \nabla \omega - \varepsilon\Delta \omega , \phi \rangle ds =0 \end{equation}
This last equality holds for every compactly supported smooth functions $\phi \in C^\infty_0([0,T)\times \R^2)$, so we deduce that $(\omega,u)$ is a solution of Navier-Stokes equation. By uniqueness of solution, $(\omega,u)$ is the solution of Navier-Stokes equation and satisfies the uniform estimates.
\end{proof}

\begin{lemma} \label{lemma} With the previous notations. Let $f$ be a compactly supported smooth function on $[0,T] \times \R^2$, then
$$ f_h:= \sum_{i=0}^{N/2} {\bf 1}_{[(2i+1)h,2(i+1)h]} (t) f $$
weakly converges in $L^2([0,T],L^2)$ to $\frac{1}{2} f$ when $h$ goes to $0$.
\end{lemma}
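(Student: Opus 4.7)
The plan is to reduce the claim to the classical weak-$*$ convergence of an oscillating indicator function. Set
$$ \chi_h(t) := \sum_{i=0}^{N/2} {\bf 1}_{[(2i+1)h, \, 2(i+1)h]}(t), $$
so that $f_h(t,x) = \chi_h(t)\, f(t,x)$. Since $0\le \chi_h\le 1$ and $f$ is smooth and compactly supported, the family $(f_h)_h$ is uniformly bounded in $L^2([0,T],L^2)$. By the standard criterion for weak convergence in a bounded set, it is enough to test against a total family of $L^2([0,T],L^2)$-functions; I would pick tensor products $\phi(t,x)=a(t)b(x)$ with $a\in C^\infty_0([0,T))$ and $b\in C^\infty_0(\R^2)$, which are dense in $L^2([0,T],L^2)$.

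With such a $\phi$, the statement reduces to
$$ \int_0^T \chi_h(t)\, G(t)\, dt \ \longrightarrow\ \frac{1}{2}\int_0^T G(t)\, dt, \qquad G(t):=a(t)\int_{\R^2} f(t,x) b(x)\, dx, $$
where $G$ is smooth and compactly supported in $[0,T)$. This is just the weak-$*$ convergence $\chi_h \rightharpoonup 1/2$ in $L^\infty([0,T])$, which I would prove by a direct Taylor estimate. On each block $[2ih,2(i+1)h]$ one has
$$ \int_{2ih}^{2(i+1)h}\chi_h(t)G(t)\,dt-\frac{1}{2}\int_{2ih}^{2(i+1)h}G(t)\,dt \;=\; \int_{(2i+1)h}^{2(i+1)h} G(t)\,dt-\frac{1}{2}\int_{2ih}^{2(i+1)h}G(t)\,dt, $$
and since $G$ varies by at most $h\|G'\|_{L^\infty}$ on each interval of length $h$, the right-hand side is bounded by $Ch^2\|G'\|_{L^\infty}$. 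Summing over the $O(1/h)$ blocks fitting inside $[0,T]$ (plus a negligible boundary block because $G$ is supported away from $T$) yields a total error of size $O(h)\|G'\|_{L^\infty}\to 0$.

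There is no real obstacle here; the only point that could be mishandled is the density step. One must make sure that the family of tensor products $a(t)b(x)$ is dense in $L^2([0,T],L^2(\R^2))$ \emph{and} that the uniform $L^2$-bound on $(f_h)_h$ is genuinely uniform, so that the $3\varepsilon$ argument (approximate $\phi$ by a tensor product, apply the pointwise-in-$t$ reasoning above, control the remainder by $\|f_h\|_{L^2}\le \|f\|_{L^2}$) closes. Once those two ingredients are in place, the weak convergence $f_h\rightharpoonup \tfrac12 f$ in $L^2([0,T],L^2)$ follows immediately.
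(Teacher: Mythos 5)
Your proof is correct and follows essentially the same route as the paper: uniform $L^2$ boundedness plus testing against a dense class of smooth functions, then a first-order Taylor expansion in time on each block of length $2h$, yielding an $O(h^2)$ error per block and $O(h)$ in total. The only cosmetic differences are that the paper compares $f_h$ with the complementary piece $\widetilde{f}_h:=\sum_i {\bf 1}_{[2ih,(2i+1)h]}f$ and uses $f_h+\widetilde{f}_h=f$, whereas you compare directly with $\tfrac12\int G$, and your reduction to tensor-product test functions is an unnecessary (though harmless) detour since the same Taylor bound works for any smooth compactly supported test function.
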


\begin{proof} Since $(f_h)_{h>0}$ is uniformly bounded in $L^2([0,T],L^2)$, it suffices us to check that for every smooth function $g$
\begin{equation}
\lim_{h\to 0 }  \iint f_h(t,x) g(t,x) \, dtdx = \frac{1}{2} \iint f(t,x) g(t,x) \, dtdx. \label{am}
\end{equation}
So let us fix such a function $g$ and set
$$ \widetilde{f}_h:= \sum_{i=0}^{N/2} {\bf 1}_{[2ih,(2i+1)h]} (t) f $$
such that $f= f_h+ \widetilde{f}_h$.
However, it is clear (by a first order expansion in the time variable) that 
$$ \left|\int_{2ih}^{(2i+1)h}\int f(t,x) g(t,x) dxdt -  \int_{(2i+1)h}^{2(i+1)h}\int f(t,x) g(t,x) dxdt \right| \leq \| \partial_t(fg)\|_{L^\infty} h^2. $$
So summing over $i$ yields
$$ \left| \iint f_h(t,x) g(t,x) \, dtdx - \iint \widetilde{f}_h(t,x) g(t,x) \, dtdx\right| \lesssim h $$
and so 
$$ \lim_{h\to 0} \left| \iint f_h(t,x) g(t,x) \, dtdx - \iint \widetilde{f}_h(t,x) g(t,x) \, dtdx\right|  = 0.$$
We conclude with the equality $ f= f_h+ \widetilde{f}_h$ which gives
$$ \frac{1}{2}f-f_h = \frac{1}{2}\left(f_h-\widetilde{f}_h\right).$$
\end{proof}


\begin{thebibliography}{9999}

\bibitem{bcd}  H.~ Bahouri, J-Y.~  Chemin and R.~ Danchin, {\it Fourier Analysis
 and Nonlinear Partial Differential Equations}, Grundlehren der mathematischen Wissenschaften 343.
 
\bibitem{BeMa} F. Bernicot and J.M. Martell, {\it Self-improving properties for abstract Poincar\'e type inequalities}, to appear in Trans. Amer. Math. Soc. 2014.

\bibitem{BH} F. Bernicot and T. Hmidi, {\it On the global well-posedness for Euler equations with unbounded vorticity}, arXiv:1303.6151.

 \bibitem{BK}  F.~ Bernicot and S.~ Keraani, {\it  Sharp constants for composition with a measure-preserving map}, arXiv:1204.6008.

\bibitem{BK2}  F.~ Bernicot and S.~ Keraani, {\it  On the global well-posedness of the 2D Euler equations for a large class of Yudovich type data}, to appear in Ann. Sci. \'Ecole Norm. Sup. (4) 2014.

 \bibitem{BM} A. L.~ Bertozzi and A. J.~ Majda :  {\it  Vorticity and incompressible flow}, Cambridge Texts
 in Applied Mathematics, vol. {\bf 27}, Cambridge University Press, Cambridge, 2002.

\bibitem{CaH} L.A. Caffarelli, Q. Huang, {\it Estimates in the generalized Campanato-John-Nirenberg spaces for fully nonlinear elliptic equations}, Duke Math. J. \textbf{118} (2003), no. 1, 1--17. 

\bibitem{Ch1} J.-Y.~  Chemin : { \it Perfect incompressible fluids}, The Clarendon Press, Oxford University Press, New York, 1998. 

\bibitem{Ch2} \bysame : {\it Th{\'e}or{\`e}mes d'unicit{\'e} pour le syst{\`e}me de
 Navier-Stokes tridimensionnel}, J. Anal. Math. {\bf{77}} (1999), 27-50.

\bibitem{Ch3} \bysame : {\it  A remark on the inviscid limit for two-dimensional incompressible fluids}. (English, French summary) 
Comm. Partial Differential Equations {\bf 21} (1996), no. 11-12, 1771--1779. 

\bibitem{CMZ} Q. Chen, C. Miao and X. Zheng, {\it The bi-dimensional Euler equations in bmo-type space}, http://arxiv.org/abs/1311.0934

\bibitem{CozzK} E. Cozzi and J Kellihler, {\it Vanishing Viscosity in the plane for vorticity in borderline spaces of Besov type}, Journal of Differential Equations, 235(2):647-657, 2007.

\bibitem{rd} R.~  Danchin, {\it The inviscid limit for density-dependent
 incompressible fluids.} {Ann. Fac. Sci.  Toulouse Math.} {\bf{15}} (2006), no 4, 637-688.

%
 \bibitem{Ch} D.~ Chae : {\it  Weak solutions of 2D Euler equations with initial vorticity in LlnL}. J. Diff. Eqs., {\bf 103} (1993), 323--337.
 
 \bibitem{De} J.-M.~ Delort :{\it  Existence de nappes de tourbillon en dimension deux}, J. Amer. Math. Sot., Vol. {\bf 4} (1991)
 pp. 553--586. 
 
 \bibitem{DM}  R.~ DiPerna and A.~ Madja : {\it Concentrations in regularization for 2D incompressible flow}. Comm. Pure Appl.
 Math. {\bf 40} (1987), 301--345.

 \bibitem{FLX} M. C.~  Lopes Filho, H. J. Nussenzveig Lopes and Z. Xin:  {\it Existence of
 vortex sheets with reflection symmetry in two space dimensions}, Arch. Ration. Mech. Anal., {\bf 158}(3) (2001), 235--257.
  
  \bibitem{FPW}  B. Franchi, C. P\'erez and R.L. Wheeden, {\it Self-improving properties of John-Nirenberg and Poincar\'e inequalities on space of homogeneous type}, J. Funct. Anal. \textbf{153} (1998), no. 1, 108--146.

\bibitem{fk} H.~  Fujita and T. Kato: {\it On the nonstationnary Navier-Stokes system}. Rend. Sem. Mat. Univ. Padova. {\bf{32}} (1962)  243-260.

  
 \bibitem{Ger} P.~  G\'erard : {\it R\'esultats r\'ecents sur les fluides parfaits incompressibles bidimensionnels [d'apr\`es J.-Y.
 Chemin et J.-M. Delort]}, S\'eminire, Bourbaki, 1991/1992, no. 757, Ast\'erisque, Vol. {\bf 206}, 1992,411--444.

 \bibitem{GMO} Y.~ Giga, T. ~ Miyakawa and H.~ Osada : {\it  \mbox{$2D$}  Navier-Stokes flow with measures as initial vorticity}. Arch. Rat.
 Mech. Anal. {\bf 104} (1988), 223--250.

 \bibitem{GR} L.~ Grafakos : {\it  Classical and Modern Fourier Analysis}, Prentice Hall, New-York, 2006.

\bibitem{K} J. Kellihler, {\it The Inviscid Limit for Two-Dimensional Incompressible Fluids with Unbounded Vorticity}, Mathematical Research Letters, Vol 11(4):519-528, 2004.
 
 \bibitem{Leray} J.~ Leray:  {\it Sur le mouvement d'un liquide visqueux remplissant l'espace},  Acta Mathematica,  {\bf 63},  193-248,  1934.  

 \bibitem{lions1} P.-L.~ Lions : {\it Mathematical topics in fluid mechanics. Vol. 1}. The Clarendon Press Oxford University Press, New York, 1996.

 \bibitem{Mas} N.~   Masmoudi : {\it Remarks about the inviscid limit of the Navier-Stokes system}. Commun. Math. Phys. {\bf 270}(3): 777--788.

 \bibitem{Pfetre} J. Pfetre : {\it On convolution operators leaving $L^{p,λ}$ spaces invariant}, Annali di Matematica Pura ed Applicata, {\bf 72} (1966), 295--304. 

 \bibitem{Ser} P.~ Serfati : {\it Structures holomorphes \`a faible r\'egularit\'e spatiale en m\'ecanique des fluides}. J. Math. Pures Appl. {\bf 74} (1995), 95--104.
 
\bibitem{Spanne} S. Spanne : {\it Some function spaces defined using the mean oscillation over cubes}. Ann. Scuola Norm. Sup. Pisa (3) \textbf{19} (1965), 593--608.
 
 \bibitem{tan}Y.~ Taniuchi : {\it Uniformly local Lp Estimate for 2D Vorticity Equation and Its Application to Euler Equations with Initial Vorticity in {\rm BMO}}. Comm. Math Phys., {\bf 248} (2004), 169--186.
 
 \bibitem{Vishik1} M.~ Vishik : {\it Incompressible flows of an ideal fluid with vorticity in borderline spaces of Besov type}. (English, French summary) 
 Ann. Sci. \'Ecole Norm. Sup. (4) {\bf 32} (1999), no. 6, 769--812. 
 
 \bibitem{Vishik2} M.~  Vishik : {\it Hydrodynamics in Besov Spaces}, Arch. Rational Mech. Anal. {\bf{145}} (1998), 197--214.  
 
 \bibitem{Y1} Y.~ Yudovich : {\it Nonstationary flow of an ideal incompressible liquid}. Zh. Vych. Mat., {\bf 3} (1963), 1032--1066.

 \bibitem{Y2} \bysame :  {\it Uniqueness theorem for the basic nonstationary problem in the dynamics of an ideal incompressible fluid}. Math. Res. Lett., {\bf 2} (1995), 27--38.

\end{thebibliography}
\end{document}